\documentclass[12pt]{amsart}

\usepackage[T1]{fontenc}
\usepackage{lmodern}

\usepackage{amscd, amssymb, amsmath, amsthm}
\usepackage{enumerate, latexsym, mathrsfs}
\usepackage{xypic}

\usepackage[pdftex,lmargin=1in,rmargin=1in,tmargin=1in,bmargin=1in]{geometry}
\usepackage[
		bookmarks=true, bookmarksopen=true,%
    bookmarksdepth=3,bookmarksopenlevel=2,%
    colorlinks=true,%
    linkcolor=blue,%
    citecolor=blue]{hyperref}
%
%

\newtheorem{theorem}{Theorem}[section]
\newtheorem{lemma}[theorem]{Lemma}
\newtheorem{proposition}[theorem]{Proposition}
\newtheorem{corollary}[theorem]{Corollary}

\theoremstyle{definition}

\newtheorem{declaration}[theorem]{Declaration}

\newtheorem{remark}[theorem]{Remark}

 \numberwithin{equation}{section}

\newcommand{\Natural}{{\mathbb N}}
\newcommand{\Real}{{\mathbb R}}

\newcommand{\Complex}{{\mathbb C}}
\newcommand{\Integral}{{\mathbb Z}}

\newcommand{\Hyp}{{\mathbf{H}^3}}

\newcommand{\SO}{{\mathrm{SO}}}
\newcommand{\PSL}{{\mathrm{PSL}}}


\newcommand{\ocurves}{{\mathbf{\Gamma}}}

\newcommand{\opants}{{\mathbf{\Pi}}}

\newcommand{\ocobordism}{{\mathbf{\Omega}}}

\newcommand{\chlen}{{\mathbf{hl}}}

\title[QF subsurface of odd Euler characteristic]{Immersing quasi-Fuchsian surfaces of odd Euler characteristic in closed hyperbolic 3-manifolds}

\author[Yi Liu]{%
        Yi Liu} 
\address{%
        Beijing International Center for Mathematical Research\\
				No.~5 Yiheyuan Road, Haidian District, Peking University\\
				Beijing 100871, China P.R.} 
\email{%
    liuyi@math.pku.edu.cn}

\thanks{Supported by the Recruitment Program of Global Youth Experts of China}
\subjclass[2010]{Primary 57M50; Secondary 57M10, 30F40}
\keywords{good pants, quasi-Fuchsian, homological torsion growth}

\date{%
 \today}

\begin{document}

\begin{abstract}
	In this paper, it is shown that every closed hyperbolic 3-manifold contains
	an immersed quasi-Fuchsian closed subsurface of odd Euler characteristic.
	The construction adopts the good pants method, and the primary new ingredient
	is an enhanced version of the connection principle, which allows one to connect
	any two frames with a path of frames 
	in a prescribed relative homology class of the frame bundle.
	The existence result is applied to show that every uniform lattice of $\PSL(2,\Complex)$
	admits an exhausting nested sequence of sublattices with exponential homological torsion growth.
	However, the constructed sublattices are not normal in general.
\end{abstract}

\maketitle

\section{Introduction}
	For any arbitrary closed hyperbolic $3$-manifold,
	immersed quasi-Fuchsian closed subsurfaces 
	have been constructed by J.~Kahn and V.~Markovic \cite{KM-surfaceSubgroup}.
	While their subsurfaces are always orientable,
	it is possible to build non-orientable subsurfaces of even Euler characteristic
	by modifying their construction, (see \cite{Sun-virtualTorsion} for example).
	It remains to be an open question 
	whether a subsurface can be constructed to have odd Euler characteristic,
	\cite[Section 11, Question 7]{Agol-proceedingsICM}.
	The question finds its motivation in several aspects of
	hyperbolic $3$-manifold geometry,
	including all the results we state in the introduction.
	The theme result of this paper
	is an affirmative answer to the existence question:
	
	\begin{theorem}\label{main-oddEulerCharacteristic}
		Let $M$ be a closed hyperbolic $3$-manifold. Then there exists 
		a connected closed surface of odd Euler characteristic $\Sigma$
		which admits
		a $\pi_1$--injective, quasi-Fuchsian immersion $j\colon \Sigma\looparrowright M$.
	\end{theorem}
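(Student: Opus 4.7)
The plan is to adapt the Kahn–Markovic good pants method, with a refinement that breaks the even-Euler-characteristic barrier. The elementary obstruction is that a closed surface assembled from $N$ pairs of pants, each contributing three cuff ends to be paired globally, must have $3N$ even and hence $N=|\chi|$ even. To force $N$ odd one must enlarge the class of building blocks or gluings, and the flexibility that makes this possible should come from an enhanced connection principle controlling the relative homology class of the connecting path in the frame bundle, as advertised in the abstract.

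First I would formulate and prove this enhanced connection principle: given frames $F_0,F_1$ in the unit frame bundle of $M$ and a prescribed relative class $\alpha$ in $H_{1}$ of the frame bundle rel $\{F_0,F_1\}$, one can exhibit a nearly geodesic frame path from $F_0$ to $F_1$ realizing $\alpha$ which thickens into a chain of good pants. The proof should follow the architecture of the ordinary connection principle (concatenate short moves along closed geodesics, using mixing of the frame flow and equidistribution) but augmented by a homological bookkeeping step: decompose $\alpha$ with respect to a basis of $H_{1}$ of the frame bundle (which combines $H_{1}(M;\Integral)$ with a $\Integral/2$ contribution from the fiber) and inject representatives of each basis cycle into the constructed path.

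Next I would deploy the enhanced principle to build the surface. Starting from a standard Kahn–Markovic assembly $\Sigma_0$ with $\chi(\Sigma_0)$ even, I would introduce a local topological surgery that changes the pants count by one, for instance a controlled exchange of a piece of $\Sigma_0$ for a non-orientable good cobordism with the same boundary curves and matching cuff data but Euler characteristic differing by one. Executing such a surgery is exactly what the enhanced connection principle makes possible, because the extra homological constraints imposed by non-orientability and by the shearing conditions must be balanced against each other, and this balancing requires prescribability of the relative class. The resulting closed assembly $\Sigma$ then has $\chi(\Sigma)$ odd, and its $\pi_1$--injective quasi-Fuchsian property follows from the standard near-Fuchsian estimates, which depend on the local geometry of the pieces rather than on their global count; connectedness is arranged by the usual flexibility of the construction.

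The main obstacle is the enhanced connection principle itself. The ordinary principle already rests on delicate mixing estimates for the frame flow; threading a prescribed relative homology class through the argument requires propagating homological information while simultaneously preserving the exponential abundance of connecting paths, which is essential for the downstream measure-theoretic matching (the Hall-type argument in the Kahn–Markovic gluing lemma). Verifying that the homological constraint does not eliminate too many paths is where I expect the bulk of the technical work to lie.
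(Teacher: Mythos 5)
Your first step---an enhanced connection principle prescribing the relative homology class in $\SO(M)$ of the frame path joining two given frames---is exactly the paper's main new tool (Theorem \ref{gradedConnectionPrinciple}), and your sketch of its proof (realize the $H_1(M)$ part by a geodesic representative, then correct the fiber contribution) points in the right direction. You do not, however, address the genuinely hard point: how a single long \emph{geodesic} path can realize the order-two fiber class of $\SO(M)|_p$. The paper does this by concatenating a commutator of two long geodesic loops whose invariant normal vectors are deliberately misaligned by a small angle, so that iterating the commutator accumulates a full spin of the normal frame (Proposition \ref{substituteFrameSpinningPath}); without some such device the fiber class is inaccessible. Also, your worry that the homological constraint might thin out the paths needed for the Hall-type matching is misplaced: the homological control is imposed only on one boundary curve, and the matching is then delegated to the already-established relative bounding theorem (Theorem \ref{solutionRelative}), which draws on all good pants.

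The second half of your proposal has a genuine gap: you never exhibit the ``non-orientable good cobordism with the same boundary curves and Euler characteristic differing by one.'' In the good-pants framework no such pants-built cobordism exists, because every building block is an oriented pair of pants and any pairing of \emph{distinct} cuff-sides yields even Euler characteristic---this is precisely the parity obstruction you began with, so the surgery you invoke presupposes what is to be proved. The paper's resolution is not a surgery on a closed Kahn--Markovic surface but the following: use Theorem \ref{gradedConnectionPrinciple} to produce a closed geodesic $\sqrt{\gamma}$ of complex length approximately $R/2+\pi\sqrt{-1}$ (a ``semi-good'' curve, obtained from a geodesic loop at $p$ whose holonomy sends $\vec{n}_p$ to $-\vec{n}_p$) whose double cover $\gamma$ is a genuine $(R,\epsilon)$--curve with null-homologous canonical lift in $\SO(M)$; bound $\gamma$ by an oriented panted surface $F$ with a single boundary component, so that $\chi(F)=1-2\cdot\mathrm{genus}(F)$ is automatically odd; then cap off by identifying antipodal points of $\gamma$, i.e., attaching a M\"obius band with core $\sqrt{\gamma}$. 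The self-gluing of that one cuff is the fixed point of the gluing involution forced by parity. Finally, $\pi_1$--injectivity of the resulting non-orientable surface is \emph{not} automatic from the local geometry of the pieces: the M\"obius cap requires Sun's geometric criterion together with an arrangement (splicing in large covers of a closed panted surface, as in Lemma \ref{longCombinatorialPath}) ensuring the pants-decomposition graph has no short essential loop through the distinguished pair of pants. Your proposal omits both the construction of the non-orientable piece and this injectivity verification.
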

		
	When an orientable closed $3$-manifold contains an embedded closed
	subsurface of odd Euler characteristic, the manifold admits a degree-one map onto
	the real projective $3$-space, by a characterization of 
	C.~Hayat-Legrand, S.~Wang, and H.~Zieschang 
	\cite[Theorem 4.1]{HWZ-lens}.	
	This fact provides one reason for us to be interested 
	in the virtual existence of such subsurfaces.
	Since hyperbolic $3$-manifold groups are LERF \cite[Theorem 9.2]{Agol-VHC},
	Theorem \ref{main-oddEulerCharacteristic} has the following immediate consequence:
	
	\begin{corollary}\label{dominatingProjectiveSpace}
		Every closed hyperbolic $3$-manifold has an orientable finite cover which admits a degree-one map onto
		the real projective $3$-space.
	\end{corollary}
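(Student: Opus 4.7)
The plan is to derive the corollary from Theorem~\ref{main-oddEulerCharacteristic} by combining it with two classical inputs already cited in the introduction: the LERF theorem of Agol for closed hyperbolic $3$-manifold groups, and the characterization of Hayat-Legrand, Wang, and Zieschang of orientable $3$-manifolds dominating $\mathbb{RP}^3$.

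First, I would reduce to the case that $M$ is orientable: if it is not, replace $M$ by its orientation double cover, which remains closed and hyperbolic, and note that any finite cover of the result is then automatically orientable. Next, apply Theorem~\ref{main-oddEulerCharacteristic} to produce a $\pi_1$-injective quasi-Fuchsian immersion $j\colon\Sigma\looparrowright M$ with $\chi(\Sigma)$ odd.

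The substantive step is to promote $j$ to an embedding in some finite cover of $M$. Setting $G=j_*\pi_1(\Sigma)\le\pi_1(M)$, the self-intersections of a generic smooth representative of $j$ form a compact subset of $M$ and so correspond to only finitely many non-trivial $(G,G)$--double cosets in $\pi_1(M)$. Since $\pi_1(M)$ is LERF and $G$ is finitely generated, for each offending double coset representative $\gamma$ one finds a finite-index subgroup of $\pi_1(M)$ that contains $G$ but misses $\gamma$; intersecting finitely many of these yields a finite-index $H\le\pi_1(M)$ containing $G$ such that the canonical lift $\tilde\jmath\colon\Sigma\to\tilde M$ into the cover $\tilde M\to M$ associated with $H$ is an embedding. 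This is the standard Scott-style separability trick, and no new technical input is needed here.

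To conclude, $\tilde M$ is a closed orientable $3$-manifold containing an embedded closed subsurface $\tilde\jmath(\Sigma)$ of odd Euler characteristic, so \cite[Theorem~4.1]{HWZ-lens} delivers the required degree-one map $\tilde M\to\mathbb{RP}^3$. Granted Theorem~\ref{main-oddEulerCharacteristic}, the argument has no real obstacle; the only point requiring any care is verifying that the Scott-style double-coset procedure carries over to the quasi-Fuchsian setting in closed hyperbolic $3$-manifolds, which it does once LERF is in hand.
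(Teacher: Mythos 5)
Your proposal is correct and follows the same route the paper intends: the paper presents the corollary as an immediate consequence of Theorem~\ref{main-oddEulerCharacteristic}, LERF for hyperbolic $3$-manifold groups, and the Hayat-Legrand--Wang--Zieschang criterion, with the LERF-to-embedding step being exactly the standard Scott-style argument you spell out for the quasi-convex subgroup $j_*\pi_1(\Sigma)$. No gaps.
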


	Corollary \ref{dominatingProjectiveSpace} might be
	a toy case of some more general fact.
	In \cite{Sun-virtualDomination}, H.~Sun has proved that closed hyperbolic $3$-manifolds virtually $2$-dominates
	any other orientable closed $3$-manifolds. In other words, for any closed orientable $3$-manifold
	$N$, every closed hyperbolic $3$-manifold $M$	has an orientable finite cover which admits a map onto $N$ of degree $2$.
	It seems that with the improved techniques of this paper (Theorem \ref{gradedConnectionPrinciple}),
	there is a good chance to promote Sun's result to virtual $1$-dominations. 
	
	Another curious application of Theorem \ref{main-oddEulerCharacteristic} shows
	certain exponential torsion growth for uniform lattices of $\PSL(2,\Complex)$:
	
	\begin{theorem}\label{exponentialTorsionGrowth}
		Every uniform lattice $\Gamma$ of $\PSL(2,\Complex)$
		contains an exhausting nested sequence of 
		non-normal torsion-free sublattices $\{\Gamma_i\}_{i\in\Natural}$
		such that
		$$\liminf_{n\to\infty} \frac{\log |H_1(\Gamma_n;\,\Integral)_{\mathrm{tors}}|}{[\Gamma:\Gamma_n]}\,>\,0.$$		
	\end{theorem}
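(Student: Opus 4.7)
The plan is to leverage Theorem \ref{main-oddEulerCharacteristic} together with the LERF property of hyperbolic $3$-manifold groups (Agol) to build, for each $N\geq 1$, a non-normal torsion-free finite-index subgroup $\Gamma_N\leq\Gamma$ whose covering $3$-manifold contains many disjoint embedded non-orientable odd-Euler-characteristic subsurfaces; each such subsurface contributes an independent order-two class to $H_1$, and the resulting accumulation of $\Integral/2$-torsion yields the desired exponential growth.

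Write $M=\Hyp/\Gamma$. By Theorem \ref{main-oddEulerCharacteristic} there is a $\pi_1$-injective quasi-Fuchsian immersion $j\colon\Sigma\looparrowright M$ with $\chi(\Sigma)$ odd, so $\Sigma$ is non-orientable. Agol's LERF theorem and Scott's separability trick produce a finite-index subgroup $\Gamma_\sharp\leq\Gamma$ into which $j_*\pi_1(\Sigma)$ embeds as the $\pi_1$ of an embedded surface $\Sigma\hookrightarrow M_\sharp=\Hyp/\Gamma_\sharp$. A regular neighborhood $\nu(\Sigma)$ is a twisted $I$-bundle, whose meridian loop $m_\Sigma$ realizes a nontrivial $\Integral/2$-torsion element of $H_1(\Sigma;\Integral)$. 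For each $N\geq 1$, apply LERF once more, together with the mod-two orientation character on $\pi_1(\Sigma)$, to construct a finite-index subgroup $\Gamma_N\leq\Gamma_\sharp$ with the following properties: (i) the preimage of $\Sigma$ in $M_N=\Hyp/\Gamma_N$ splits into at least $c\cdot[\Gamma_\sharp:\Gamma_N]$ disjoint embedded non-orientable closed components $\Sigma^{(1)},\ldots,\Sigma^{(k_N)}$, each of odd Euler characteristic, for a uniform $c>0$; and (ii) $\Gamma_N$ is not normal in $\Gamma$. The separation of many conjugates of $\pi_1(\Sigma)$ into distinct $\Gamma_N$-cosets, together with the preservation of non-orientability in each component, is a standard use of LERF.

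For each non-orientable $\Sigma^{(i)}$ the meridian class $m^{(i)}_\Sigma\in H_1(M_N;\Integral)$ is of order dividing $2$, and $\Integral/2$-linear independence of these classes follows from pairing against the mod-two Poincar\'e duals $\alpha^{(i)}\in H^1(M_N;\Integral/2)$ of the disjoint $\Sigma^{(i)}$, giving $\alpha^{(i)}(m^{(j)}_\Sigma)=\delta_{ij}$. Consequently
\[
\log\bigl|H_1(M_N;\Integral)_{\mathrm{tors}}\bigr|\;\geq\;k_N\log 2\;\geq\;c\log 2\cdot[\Gamma_\sharp:\Gamma_N],
\]
and dividing by $[\Gamma:\Gamma_N]=[\Gamma:\Gamma_\sharp]\cdot[\Gamma_\sharp:\Gamma_N]$ yields the required strictly positive $\liminf$. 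An exhausting nested subsequence $\{\Gamma_{N_i}\}$ is extracted by diagonalizing against a residually finite tower of $\Gamma$; torsion-freeness is automatic, and non-normality survives the diagonalization.

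The main obstacle is item (i) combined with the independence step: one must produce, in the cover $M_N$, a number of non-orientable embedded surface components that grows linearly in the degree $[\Gamma_\sharp:\Gamma_N]$, and simultaneously ensure that their mod-two Poincar\'e duals are $\Integral/2$-linearly independent. This refined, quantitative form of LERF must be compatible with the mod-two orientation character on $\pi_1(\Sigma)$ and must track how distinct surface components sit in the relative homology of the ambient cover. This is where the enhanced connection principle (Theorem \ref{gradedConnectionPrinciple}) of the present paper is expected to be decisive: it supplies the needed control over prescribed relative-homology classes between frames, which translates into $\Integral/2$-independence of the many non-orientable subsurface components and thus into the exponential $\Integral/2$-torsion growth.
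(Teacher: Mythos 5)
Your overall plan has the right skeleton: produce an odd-Euler-characteristic $\Sigma$ via Theorem \ref{main-oddEulerCharacteristic}, embed it in a finite cover, arrange many disjoint embedded copies in further covers, and harvest $\Integral_2$-torsion from each. The $\Integral_2$-torsion mechanism you invoke is essentially Lemma \ref{oddChiAndTor} of the paper (there it is argued via the cup-cube $[\Sigma_i]^3\neq 0$ in $H^*(N;\Integral_2)$ and the Universal Coefficient Theorem, rather than your meridian-pairing argument, but that piece is a detail). However, the two load-bearing steps are not supplied, and the tool you expect to supply them is not the one the paper uses.

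The first gap is quantitative: LERF (even Scott's trick) gives you, for any one conjugate $g\pi_1(\Sigma)g^{-1}$ and one element to exclude, a finite cover in which that one copy embeds. It gives no control whatsoever on what fraction of the elevations of $\Sigma$ in a degree-$d$ cover are actual homeomorphic lifts, and that fraction is what you need to be bounded below by a uniform constant. You flag this yourself as ``the main obstacle,'' but then assert it will follow from the enhanced connection principle (Theorem \ref{gradedConnectionPrinciple}). That is a misattribution: in the paper, the enhanced connection principle is used only to build $\Sigma$ in the first place; it plays no role in the tower construction. What the paper actually does is engineer a much stronger hypothesis and then run a delicate combinatorial induction. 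Concretely, Proposition \ref{exponentialTorsionGrowthSpecial} requires not just an embedded $\Sigma$ but a second, orientable, embedded, non-separating quasi-Fuchsian surface $S$ disjoint from $\Sigma$, together with a retraction $\rho\colon M\to S$ with $\rho\circ\iota_S\simeq\mathrm{id}$ and $\rho\circ\iota_\Sigma\simeq\mathrm{const}$. This retraction package (obtained from virtual retraction of quasi-convex subgroups in special groups, via Haglund--Wise and Wise, not from LERF) is essential: the condition $\rho\circ\iota_\Sigma\simeq\mathrm{const}$ is precisely what forces \emph{every} elevation of $\Sigma$ into a pull-back cover $V\times_{\rho}\tilde{S}$ to be a homeomorphic lift. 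You drop $S$ entirely, so you have no mechanism to force elevations to be lifts.

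The second gap is the tower itself. The paper takes the cyclic tower $M'_n\to M$ dual to $S$ of degree $d^{s_n}$, encodes its piece decomposition by a ``generalized digital expansion,'' and proves (Lemma \ref{heavyDust}) that, for $d$ large, a fraction $>1-\epsilon$ of the blocks are singletons on which the pull-back-cover description applies. To upgrade $M'_n$ to the eventual $\tilde{M}_n$ while killing the group element $g_n$, the paper glues together boundary-characteristic finite covers of the individual blocks, and the only tool strong enough to produce such covers is the omnipotence consequence of Wise's Malnormal Special Quotient Theorem (Lemma \ref{omnipotence}) -- again not LERF. Your proposed final move, diagonalizing $\{\Gamma_N\}$ against an arbitrary residually finite tower to get nestedness and exhaustion, would generically destroy the careful lift-count in each stage; the paper avoids this by building the nested exhausting tower and the lift count simultaneously in the same induction (Lemma \ref{constructionAssertedTower}), with the injectivity radius at the base point growing because $g_n$ fails to lift. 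In short: the idea and the torsion-counting endgame match the paper, but the middle -- producing a positive density of embedded lifts in a nested exhausting tower -- needs the retraction structure and MSQT/omnipotence machinery, and is not a ``standard use of LERF,'' nor is the enhanced connection principle the missing ingredient.
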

	
	Here the sequence being nested means that each subgroup $\Gamma_n$ 
	contains its successor $\Gamma_{n+1}$,
	and being exhausting means that the common intersection of all the subgroups is trivial.
	The term in the logarithmic function is the cardinality of the torsion subgroup of
	the first integral homology of the group $\Gamma_n$.
	
	For general uniform lattices of $\PSL(2,\Complex)$, Theorem \ref{exponentialTorsionGrowth}
	seems to be the first known 
	exhausting sequence of sublattices
	with exponential homological torsion growth.
	A key feature of the construction is that it produces
	a huge number of disjointly embedded subsurfaces of odd Euler characteristic
	in the quotient hyperbolic $3$-manifolds $\Hyp\,/\,\Gamma_n$,
	so a positive portion of $\Integral_2$ homological torsion can
	be recognized.
	As to be explained in more details in Section \ref{Sec-irregularExhaustingTower},
	our approach 
	is conceptually simple but technically nontrivial.
	Besides Theorem \ref{main-oddEulerCharacteristic}, 
	the construction relies essentially on the virtual specialness of hyperbolic $3$-manifold groups,
	proved by I.~Agol \cite{Agol-VHC} and D.~Wise \cite{Wise-book}.
	In particular, the sublattices are constructed inductively
	by invoking Wise's Malnormal Special Quotient Theorem.
	The statement of Theorem \ref{exponentialTorsionGrowth} is known to be false
	for certain uniform lattices of higher-rank simple real Lie groups, 
	including	$\mathrm{SL}(n,\Real)$ for $n>2$ and $\mathrm{SO}(p,q)$ for $q>1$ and large $p$,
	\cite{Abert--Gelander--Nikolov}.
	
	It should be pointed out, however, that Theorem \ref{exponentialTorsionGrowth}
	does not say much about the well known asymptotic growth conjecture on virtual homological torsion,
	(see \cite[Conjecture 1.13]{Bergeron-Venkatesh} for arithmetic lattices
	and \cite[Question 13.73]{Lueck-book} for closed Riemannian manifolds).
	One (optimistic) version of the conjecture may be stated as follows: 
	Given a lattice $\Gamma$ of $\PSL(2,\Complex)$,
	for all exhausting nested sequences of normal torsion-free sublattices of $\Gamma$,
	the limit in Theorem \ref{exponentialTorsionGrowth} equals $\mathrm{Vol}(\Hyp\,/\,\Gamma)\,/\,6\pi$.
	Even if $\Gamma$ is uniform,
	the sublattices $\{\Gamma_n\}_{n\in\Natural}$ of our construction
	are far from normal. In fact, as $n$ increase,
	larger and larger balls emerge in the quotient hyperbolic $3$-manifolds, 
	but in a highly scattered fashion with relatively small total volume,
	so the sequence	is
	not even convergent in the Benjamini--Schramm sense.
	
	The major innovation of this paper
	is an enhanced version of 
	the connection principle in good pants constructions,
	Theorem \ref{gradedConnectionPrinciple}.
	The reader is referred to Section \ref{Sec-preliminaries} for 
	terminology and background on this topic.
	To illustrate the primary issue,
	suppose that we are asked to construct a good curve $\gamma$ 
	so that it bounds a surface built up with good pants.
	It is known that we can design the construction
	to make $\gamma$ good and null homologous in $M$, 
	the oriented closed hyperbolic $3$-manifold under consideration.
	Even so, there is still one geometric obstruction.
	As a good curve, $\gamma$ has a so-called canonical lift $\hat{\gamma}$ in 
	the frame bundle $\SO(M)$,
	which is a loop of frames well defined up to homotopy. 
	For the null homologous good curve
	$\gamma$ to bound a good panted subsurface, it turns out that
	its canonical lift $\hat{\gamma}$ must also be null homologous in $\SO(M)$.
	Therefore, we need to seek for some refinement of our construction
	to make sure that the homology class of the canonical lift can be controlled.
	A solution of the example task above contains the key idea
	of our refined construction.
		An outline is deferred to the last part of the introduction.
	
	It turns out that the construction for Theorem \ref{main-oddEulerCharacteristic}
	can be reduced to a very similar situation:
	We need to find a `semi-good' curve $\sqrt{\gamma}$ whose double cover 
	is a bounding good curve $\gamma$.
	In fact, the presence of such a curve $\gamma$ is necessary since a good panted subsurface
	of odd Euler characteristic contains an odd number of good pants, so the gluing map as a free involution
	on the union of cuffs must preserve some component by the parity.
	One could produce the semi-good curve $\sqrt{\gamma}$ by an \textit{ad hoc} modification
	of the construction of the example task. Alternatively, as we present in this paper,
	we can establish a more generally adaptable construction,
	which allows us to connect frames by good paths of frames
	in any prescribed relative homology class (Theorem \ref{gradedConnectionPrinciple}).
	Accompanied with the enhanced version of the connection principle,
	the package of \cite{Liu-Markovic} can be applied
	to produce good curves and panted $2$-complexes in closed hyperbolic $3$-manifolds,
	now with significantly more precise control of their homology.
	For simplicity, we state the following ordinary form of Theorem \ref{gradedConnectionPrinciple},
	which roughly says that every integral first homology class of 
	the frame bundle can be represented 
	by a good curve, eventually, as we increase the required length.

	\begin{theorem}\label{constructGradedGoodCurves}
		Let $M$ be an oriented closed hyperbolic $3$-manifold.
		Given any homology class $\Xi\in H_1(\SO(M);\Integral)$ and any constant $\epsilon>0$, 
		there exists a constant $R_0=R_0(\epsilon,M,\Xi)>1$ such that the following holds true.
		For every constant $R>R_0$,
		there exists an $(R,\epsilon/R)$--curve 
		$\gamma$ in $M$ of which the canonical lift $\hat\gamma$
		represents $\Xi$ in $H_1(\SO(M);\Integral)$.
	\end{theorem}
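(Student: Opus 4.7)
The plan is to deduce Theorem \ref{constructGradedGoodCurves} from Theorem \ref{gradedConnectionPrinciple} by specializing the latter to based loops of frames. First, I would fix a reference frame $F \in \SO(M)$; since $\{F\}$ is contractible, any class $\Xi \in H_1(\SO(M);\Integral)$ lifts canonically to $H_1(\SO(M),\{F\};\Integral)$. Applying Theorem \ref{gradedConnectionPrinciple} to the pair of frames $(F,F)$ with prescribed homology input $\Xi$ (up to a correction discussed below) yields, for every sufficiently large $R$, a good path of frames from $F$ to $F$ of length approximately $2R$ whose trace in $\SO(M)$ represents $\Xi$.

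Next, I would convert this broken loop of frames into the canonical lift of an honest closed geodesic. Because the output of the graded connection principle is, by design, a concatenation of nearly-geodesic arcs with small bendings at each junction, the standard Kahn--Markovic closing-up argument provides a closed geodesic $\gamma$ which is an $(R,\epsilon/R)$-curve, whose canonical lift $\hat{\gamma}$ lies in a uniformly small tube around the original loop of frames. The length and spacing bounds required by the $(R,\epsilon/R)$-condition follow directly from the estimates inherent to the path produced by Theorem \ref{gradedConnectionPrinciple}.

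The main obstacle will be to control what this closing-up does to the homology class in $H_1(\SO(M);\Integral)$. Each local bending correction is nullhomotopic within a ball of uniformly bounded radius in $\SO(M)$, but one needs a uniform statement that the cumulative correction $\tau = [\hat{\gamma}] - [P]$ (where $P$ denotes the trace of the original good path) is a fixed class depending only on the universal closing-up procedure and on the geometry of $M$, independently of $\Xi$ and $R$. A clean way to establish this would be to construct a continuous homotopy, supported in tubes of bounded length around each junction, from the original loop of frames to $\hat{\gamma}$, whose trace then provides a bounding $2$-chain witnessing the desired equality. Granted such uniformity, one simply reruns the graded connection principle with input $\Xi - \tau$ in place of $\Xi$.

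Once this class-preservation step is settled, the result follows. The final constant $R_0$ is then the maximum of the threshold provided by Theorem \ref{gradedConnectionPrinciple} applied to the adjusted class $\Xi - \tau$ and the length scale required for the closing-up to be valid, both of which depend only on $\epsilon$, $M$, and $\Xi$.
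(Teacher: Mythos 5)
Your overall architecture is exactly the paper's: apply Theorem \ref{gradedConnectionPrinciple} to a single frame $\mathbf{p}$ serving as both endpoints, pass to the free geodesic loop $\gamma$ freely homotopic to the resulting closed path $s$, and absorb the discrepancy between the trace of the frame path and the canonical lift into the input class. The one sub-step that would fail if executed literally is your proposed homotopy ``from the original loop of frames to $\hat{\gamma}$'' witnessing $\tau$ by a bounding $2$-chain: no such homotopy exists, because by definition the canonical lift $\hat\gamma$ contains an extra copy of the central loop $\hat{c}\in\pi_1(\SO(M))$ in the fiber, inserted precisely so that the map $\Phi$ of Theorem \ref{goodPantedCobordismGroupLM} vanishes on boundaries of pants. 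Hence $\tau=[\hat{c}]$, which is nonzero of order two in $H_1(\SO(M);\Integral)$, and the geometric closing-up itself contributes nothing: Lemma \ref{estimateCyclicReduction} shows the parallel-transport loop along the based closed path $s$, closed up by the short fiber path, is freely homotopic to the corresponding loop along $\gamma$ for any basepoint. Your fallback is then exactly the paper's proof: run Theorem \ref{gradedConnectionPrinciple} with input $\Xi+[\hat{c}]$ (which equals your $\Xi-\tau$ since $[\hat{c}]$ has order two) and length target $R$ rather than $2R$, so that $\gamma$ is an $(R,\epsilon/R)$--curve and $[\hat\gamma]=\Xi+2[\hat{c}]=\Xi$.
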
		
		
	We remark that fixing $\epsilon$ and $M$,  
	the \textit{a priori} bound $R_0(\epsilon,M,\Xi)$ can be chosen to depend linearly on the word length of $\Xi$, 
	with respect to any finite generating set of the first homology.
	As $R$ tends to infinity, the number of $(R,\epsilon/R)$--curves 
	representing any given class $\Xi$ grows exponentially fast,
	in a fashion independent of $\Xi$.
	In fact, it is possible to derive the asymptotics 
	from Kimoto--Wakayama \cite{Kimoto--Wakayama}, (see also \cite{Pollicott--Sharp,Sarnak--Wakayama}).
	
	We end up our introduction by sketching a solution to our example task,
	to construct of a good curve $\gamma$ with a null homologous canonical lift $\hat{\gamma}$.
	As mentioned, 
	the first attempt is to create a null homologous good curve $\zeta$.
	We take two long closed geodesic paths $a,b$ based at the same point $p\in M$,
	making sure that they point very sharply against each other, with little twist
	in the normal direction.
	In other words, we apply the connection principle as usual to
	construct $a$ and $b$ so that the initial direction of $a$ and
	the terminal direction of $b$ are approximately some unit vector $\vec{t}_p$ at $p$,
	while the terminal direction of $a$ and the initial direction of $b$ are approximately $-\vec{t}_p$;
	moreover, we require the parallel transport of some unit vector $\vec{n}_p\perp\vec{t}_p$
	at $p$ along either $a$ or $b$ is approximately $\vec{n}_p$.
	Then a null homologous good curve $\zeta$ 
	can be constructed as the (reduced) cyclic concatenation of
	the commutator word $ab\bar{a}\bar{b}$. 
	However, a fundamental calculation tells us that
	the canonical lift $\hat{\zeta}$ represents
	the nontrivial element $[\hat{c}]\in H_1(\SO(M);\Integral)$,
	in general, (see Lemma \ref{estimateCommutator}).
	This should not be too surprising because of the following intuition:
	If we were on an oriented closed hyperbolic surface rather than in a $3$-manifold,
	the same construction would	work perfectly well, 
	but there would be no chance for $\zeta$ to bound an immersed subsurface,
	since it has an odd self-intersection number.
		
	We are therefore hinted to make some essential use of the extra dimension. 
	The trick is to intentionally misalign the invariant normal vectors of $a$ and $b$
	with some small angular difference, and take a suitable power of $\zeta$ to be the desired $\gamma$.
	For simplicity, let us assume that the injectivity radius of $M$ is not too small, say, at least $1$.
	Choose two unit normal vectors $\vec{n}_p$ and 
	$\vec{n}_p(2\pi/400)$, the rotation of $\vec{n}_p$ about $\vec{t}_p$ of the angle $2\pi/400$.
	We construct the closed paths $a$ and $b$ as before except asking
	$a$ to preserve $\vec{n}_p$ and $b$ to preserve $\vec{n}_p(2\pi/400)$, approximately.
	This causes some tiny nontrivial honolomy of the concatenated path
	$ab\bar{a}\bar{b}$: The transport of $\vec{n}_p$ along the path
	becomes approximately $\vec{n}_p(2\pi/100)$.
	If we believe, for the intuitive reason above,
	that the parallel transport path of the frame 
	$\mathbf{p}=(\vec{t}_p,\vec{n}_p,\vec{t}_p\times\vec{n}_p)$ 
	along the commutator path is approximately relatively homologous
	to the path of spinning frames $\mathbf{p}(\phi)$ about $\vec{t}_p$ 
	parametrized by the angle $\phi\in [0,2\pi/100]$, 
	then we can expect that the cyclically concatenated path $(ab\bar{a}\bar{b})^{100}$
	gives rise to a null homologous good curve $\gamma$ with a null homologous canonical lift $\hat{\gamma}$,
	as if the accumulated effect of the normal spinning corrected
	the homology class of $\hat\gamma$ from $[\hat{c}]$ back to $0$.
	To implement the idea, of course, we need to set up fundamental calculations and estimates,
	and make careful choice of error scales and other constants. 
	The idea condenses roughly to Proposition \ref{substituteFrameSpinningPath}, 
	which is the heart of Theorem \ref{gradedConnectionPrinciple}.
	
	The paper is organized as follows.
	In Section \ref{Sec-preliminaries}, we review the good pants construction of Kahn--Markovic and
	its subsequent development. In Section \ref{Sec-connectingFrames}, we introduce our enhanced version of 
	the connection principle. The proof of Theorem \ref{constructGradedGoodCurves} can be found at the end of that
	section. Section \ref{Sec-oddEulerCharacteristic} contains the proof of our theme result, Theorem \ref{main-oddEulerCharacteristic}.
	The essential construction for Theorem \ref{exponentialTorsionGrowth} is presented in Section \ref{Sec-irregularExhaustingTower},
	and the complete proof is summarized in Section \ref{Sec-uniformLattices}.
	
	\subsection*{Acknowledgements} 
	The author would like to thank Ian Agol and Vlad Markovic for many interesting and
	inspiring conversations. The author also thanks Hongbin Sun for valuable communications.

\section{Preliminaries}\label{Sec-preliminaries}
		This section contains a compact introduction to
		the good pants method. This method has been invented by J.~Kahn and V.~Markovic \cite{KM-surfaceSubgroup}
		to resolve the Surface Subgroup Conjecture
		and developed by various authors in subsequent works,
		see \cite{Hamenstadt,KM-Ehrenpreis,Liu-Markovic,Saric,Sun-virtualTorsion,Sun-virtualDomination}.
		For closed hyperbolic $3$-manifolds	and potentially for other compact rank-one locally symmetric spaces,
		it provides a package of tools which enables one to conveniently construct certain
		$\pi_1$--injectively	immersed $2$-subcomplexes,
		 especially nearly totally geodesic subsurfaces.
		For the purpose of this paper, 
		we restrict our discussion to closed hyperbolic $3$-manifolds 
		and include only a minimal collection of relevant materials. 
		
		Let $M$ be an oriented closed hyperbolic $3$-manifold.
		The idea of the good pants method is to produce
		$\pi_1$--injectively immersed $2$-complexes in $M$
		by gluing pairs of \emph{good pants} in a nice way
		along their common cuffs, which are \emph{good curves}.
		Roughly speaking, a good curve in $M$ is an oriented geodesic loop with nearly trivial holonomy; 
		a pair of good pants in $M$ is an oriented immersed	pair of pants 
		which is nearly regular and nearly totally geodesic.
		
		In quantitative terms, suppose that $0<\epsilon<1$ and $R>1$ are given constants,
		where $\epsilon$ is presumably very small and $R$ very large.
		An \emph{$(R,\epsilon)$--curve}
		is defined to be an oriented geodesic loop 
		of complex length approximately the real value $R$,
		with the error required to be at most $\epsilon$ in absolute value of the difference.
		A pair of \emph{$(R,\epsilon)$--pants}
		is defined to be an (unmarked) oriented immersed pair of pants
		with the following extra requirements on its shape:
		The three cuffs are required to be $(R,\epsilon)$--curves; 
		the three seams are required to be the unique arcs
		of the pair of pants 
		which connect between the cuffs as common perpendicular geodesic arcs;
		as hence there are six well-defined feet, namely,
		the unit normal vectors of the cuffs attached at the endpoints of the seams pointing into the seams,
		the final requirement is that each foot of a cuff 
		should be approximately equal to the parallel transport of the other foot of that cuff,
		along the half cuff in the forward direction,
		and that the error should be at most $\epsilon$	measured in angle between unit normal vectors.
		In other words, it only makes sense to speak of the complex \emph{half} length
		of an $(R,\epsilon)$--curve as a cuff of a (nonsingular) $\pi_1$--injectively immersed pair of pants,
		and being a pair of good pants requires all the cuffs to have complex half length $\epsilon$--close to $R/2$ 
		(rather than $R/2+\pi\cdot\sqrt{-1}$).
		We do not distinguish $(R,\epsilon)$--curves which are the same up to change of parametrization,
		or $(R,\epsilon)$-pants which are the same up to homotopy and orientation-preserving self-homeomorphism.
		The reader is referred to \cite[Section 2]{KM-surfaceSubgroup} for the original definition 
		and \cite[Section 2]{Liu-Markovic} for expanded discussions.
		
		Adopt the notations
		$$\ocurves_{R,\epsilon}(M)=\{\,(R,\epsilon)\textrm{--curves of }M\};\
			\opants_{R,\epsilon}(M)=\{\,(R,\epsilon)\textrm{--pants of }M\,\}.$$
		For any given constant $\epsilon$, as long as $R$ is sufficiently large,
		where an \textit{a priori} lower bound depends only on $M$ and $\epsilon$,
		$\ocurves_{R,\epsilon}(M)$	and $\opants_{R,\epsilon}(M)$ are always non-empty. 
		In fact, $(R,\epsilon)$--curves and $(R,\epsilon)$--pants can be produced 
		using the following basic construction, 
		which is an immediate consequence of the (exponential) mixing property of the frame flow,
		(see \cite[Lemma 4.4]{KM-surfaceSubgroup}, \cite[Lemma 4.15]{Liu-Markovic}):
		
		\begin{proposition}[Connection principle]\label{connectionPrinciple}
			Let $M$ be an oriented closed hyperbolic $3$-manifold.
			%
			Let	$\vec{t}_p\perp\vec{n}_p$ and $\vec{t}_q\perp\vec{n}_q$
			be two pairs of orthogonal unit vectors	at the points $p$ and $q$ of $M$, respectively.
			%
			
			Given any positive constant $\delta$,
			and for every sufficiently large positive constant $L$ with respect to
			$M$ and $\delta$, 
			there exists a geodesic path $s$ in $M$ from $p$ to $q$ with the following properties:
			\begin{itemize}
				\item The length of $s$ is $\delta$--close to $L$. The initial direction of $s$ is $\delta$--close 
				to $\vec{t}_p$ and the terminal direction of $s$ is $\delta$--close 
				to $\vec{t}_q$, where the distance is measured by the angle between unit vectors.
				\item The parallel transport from $p$ to $q$ along $s$ takes $\vec{n}_p$ to a unit vector
				$\vec{n}'_q$ which is $\delta$--close to $\vec{n}_q$.
			\end{itemize}
		\end{proposition}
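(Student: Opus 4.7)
The plan is to lift the data to the oriented orthonormal frame bundle $\SO(M)$ and invoke the mixing property of the frame flow. Given an orthogonal pair $(\vec{t},\vec{n})$ at a point of $M$, completing it by $\vec{t}\times\vec{n}$ produces a well-defined element of $\SO(M)$ (using the orientation of $M$). Let $F_p\in\SO(M)$ and $F_q\in\SO(M)$ denote the frames associated to $(\vec{t}_p,\vec{n}_p)$ and $(\vec{t}_q,\vec{n}_q)$, respectively. The frame flow $\phi_t\colon\SO(M)\to\SO(M)$ moves a frame along the geodesic determined by its first vector, parallel transporting the other two vectors. A geodesic path $s$ of length $L$ with the three properties in the statement corresponds precisely to a frame $F\in\SO(M)$ close to $F_p$ such that $\phi_L(F)$ is close to $F_q$, where ``close'' is in a metric on $\SO(M)$ that controls both the footpoint and the three axes of the frame up to angular error comparable to $\delta$.

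First I would fix small open neighborhoods $U\ni F_p$ and $V\ni F_q$ in $\SO(M)$ whose diameters are bounded by a function of $\delta$ chosen so that any $F\in U$ with $\phi_L(F)\in V$ yields a geodesic path $s=\pi\circ\phi_{[0,L]}(F)$ verifying all three bullet points of the proposition, after replacing $L$ by the actual length, which lies within $\delta$ of $L$ provided the neighborhoods are thin in the time direction. Both $U$ and $V$ have positive Haar measure in $\SO(M)$. Next I would invoke the fact that the frame flow on a closed hyperbolic $3$-manifold is mixing with respect to the normalized Haar measure (in fact exponentially mixing; this is the classical result of Moore on decay of matrix coefficients for $\PSL(2,\Complex)$, together with the identification of $\SO(M)$ with a quotient of $\PSL(2,\Complex)$). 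Mixing yields
\[
\mu(\phi_L(U)\cap V)\;\longrightarrow\;\mu(U)\cdot\mu(V)\quad\text{as }L\to\infty,
\]
where $\mu$ is the normalized Haar measure. Hence for all sufficiently large $L$, the intersection $\phi_L(U)\cap V$ has positive measure and in particular is non-empty. Choosing any $F$ with $\phi_L(F)\in V$ and projecting the frame-flow orbit to $M$ gives the desired geodesic path $s$: its length is $\delta$-close to $L$, its endpoints and tangent directions lie in the prescribed small neighborhoods, and the parallel transport of $\vec{n}_p$ along $s$ is, by definition of the frame flow, the second axis of $\phi_L(F)$, which is $\delta$-close to $\vec{n}_q$.

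The only obstacle is quantitative: one must choose the neighborhoods $U$ and $V$ so that the error bounds depend only on $M$ and $\delta$, and verify that the minimum length $L_0$ produced by mixing also depends only on $M$ and $\delta$, not on the particular footpoints or frames. This uniformity follows because $\SO(M)$ is compact, so the mixing rate is uniform, and one can take $U,V$ to be Haar-translates of a fixed small ball around the identity frame, whose measure is a function of $\delta$ alone. A slight care is needed in the time variable, since the flow spreads the footpoints along the geodesic direction; by restricting to neighborhoods that are thin in the flow direction (or, equivalently, allowing $L$ to vary within a $\delta$-window and applying mixing to the enlarged set), one controls the length error as well. Once these uniform estimates are in place, the argument above produces the required path.
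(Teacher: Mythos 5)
Your argument via mixing of the frame flow on $\SO(M)$ is exactly the approach the paper takes: the proposition is stated there as an immediate consequence of the (exponential) mixing property of the frame flow, with the details delegated to \cite[Lemma 4.4]{KM-surfaceSubgroup} and \cite[Lemma 4.15]{Liu-Markovic}. The one detail you elide is that the projected flow segment runs from $\pi(F)$ to $\pi(\phi_L(F))$, which are only close to $p$ and $q$, so one must replace it by the genuine geodesic from $p$ to $q$ in the corresponding homotopy class and check that its length, endpoint directions, and holonomy differ by a controlled amount --- a standard estimate for long geodesic segments with nearby endpoints.
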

		
		Moreover,	as the required cuff length $R$ grows, there will eventually be
		plenty of $(R,\epsilon)$--pants and their feet tend to be very evenly distributed
		on the unit normal bundle over every $(R,\epsilon)$--curve. 
		By nicely gluing a suitable collection of good pants along common cuffs, Kahn and Markovic are able to resolve the Surface Subgroup Conjecture
		for closed hyperbolic $3$-manifolds \cite{KM-surfaceSubgroup}:
		
		\begin{theorem}\label{surfaceSubgroupTheorem}
			Every closed hyperbolic $3$-manifold contains a $\pi_1$--injectively immersed quasi-Fuchsian closed subsurface.
		\end{theorem}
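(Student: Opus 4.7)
The plan is to follow the good pants strategy of Kahn--Markovic, assembling the desired subsurface from a carefully chosen finite collection of pants in $\opants_{R,\epsilon}(M)$ glued along their common cuffs, which are curves in $\ocurves_{R,\epsilon}(M)$. The output will be a closed panted $2$-complex whose gluing data at every cuff is uniformly close to a prescribed \emph{model shear}, so that its developing map into $\Hyp$ is close to a Fuchsian one; this nearness is what will simultaneously deliver $\pi_1$--injectivity and quasi-Fuchsianness. The first step is to fix $\epsilon>0$ small and let $R$ be large compared to a lower bound that will depend on $M$ and $\epsilon$.

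I would first use Proposition \ref{connectionPrinciple} and the exponential mixing of the frame flow to produce, for every $\gamma\in\ocurves_{R,\epsilon}(M)$ that arises as a cuff, a large collection of elements of $\opants_{R,\epsilon}(M)$ containing $\gamma$ as a boundary cuff. The quantitative point is that the two sets of \emph{feet} $A_\gamma$ and $B_\gamma$, coming from the two sides of $\gamma$ in the unit normal bundle $\vistorus_\gamma\cong \Real/R\Integral\times\Real/2\pi\Integral$, become $\epsilon/R$--equidistributed with respect to Lebesgue measure as $R\to\infty$. This is the analogue of \cite[Section 3]{KM-surfaceSubgroup} and is a direct consequence of mixing of the geodesic and frame flows, once $R$ is sufficiently large relative to $\epsilon$ and $M$.

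The core of the argument, and where I expect the main difficulty, is the \emph{matching step}. For each cuff $\gamma$, one must produce a bijection $\sigma_\gamma\colon A_\gamma\to B_\gamma$ such that every matched pair has shear within $\epsilon/R$ of the ideal value $1+\pi\imunit\in\Complex/(R\Integral+2\pi\imunit\Integral)$; only then does the reassembled surface have nearly totally geodesic geometry. I would set up, for each $\gamma$ simultaneously, a bipartite graph whose edges record the admissible (near-ideal) pairs, produce a fractional perfect matching from the equidistribution of $A_\gamma$ and $B_\gamma$, and then round it to an integral perfect matching by a Hall--type argument after suitably duplicating the underlying multisets. The global coherence across all cuffs is what makes this step delicate, since the same pants appears along three cuffs and duplications on one cuff have to be compatible with duplications on the others; this is handled by Kahn--Markovic via the good pants homology and a common-refinement trick which I would import wholesale.

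Once the matching $\{\sigma_\gamma\}$ is in hand, the pants glue into a closed connected orientable panted surface $\Sigma$ (connectedness can be arranged by passing to a component, since each component is itself panted and closed), and the pants immersions fit together to give an immersion $j\colon\Sigma\looparrowright M$. Because every cuff shear is $\epsilon/R$--close to $1+\pi\imunit$, a direct estimate on the holonomies around the cuffs shows that a lift of the developing map $\tilde\Sigma\to\Hyp$ is a $(1+O(\epsilon))$--quasi-isometric embedding onto a quasi-plane in $\Hyp$. This implies at once that $j$ is $\pi_1$--injective and that the image subgroup is quasi-Fuchsian, completing the theorem.
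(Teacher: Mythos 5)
This is the Kahn--Markovic surface subgroup theorem, which the paper quotes without proof, and your outline --- equidistribution of feet via exponential mixing, a Hall-type matching of feet across each cuff with near-unit shear, and the ``nearly unit-shear panted surface is incompressible and quasi-Fuchsian'' endgame --- is exactly their argument, so the approach is the same. One small correction: the good pants homology and a common-refinement step are not needed for this absolute existence statement, since taking the entire collection of $(R,\epsilon)$--pants with a uniform multiplicity automatically balances the two multisets of feet over every cuff and the matchings at distinct cuffs are independent; that homological machinery (the group $\ocobordism_{R,\epsilon}(M)$ of Theorem \ref{goodPantedCobordismGroupLM}) is the tool for the relative boundary problem of Theorem \ref{solutionRelative}, not for Theorem \ref{surfaceSubgroupTheorem}.
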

		
		Associated with the constructed surface
		is a naturally induced pants decomposition with markings,
		so the shape of surface can be described 
		by its complex Fenchel--Nielsen coordinates.
		In these terms, the complex half length and the shearing parameter for
		each glued cuff $C$ are approximately $(\chlen(C),s(C))\approx (R/2,1)$.
		The first component has error at most $\epsilon$ and the second component has error at most $\epsilon/R$. 
		%
		%
		
		The relative version of the construction problem has been studied in \cite{Liu-Markovic},
		namely, whether a collection of good curves can bound
		a subsurface which is nicely glued from good pants.
		Regarding any given collection of $(R,\epsilon)$--curves (possibly with multiplicity) as
		an element of the free integral module $\Integral\ocurves_{R,\epsilon}(M)$ 
		(in the non-negative hyper-octant),
		and similarly $(R,\epsilon)$--pants as of $\Integral\opants_{R,\epsilon}(M)$,
		an obstruction to solving the relative problem
		lies in the cokernel of the homomorphism:
			$$\partial\colon \Integral\opants_{R,\epsilon}(M)\longrightarrow\Integral\ocurves_{R,\epsilon}(M),$$
		defined by taking any pair of pants	to the sum of its three cuffs.
		For large $R$ with respect to $\epsilon$ and $M$, 
		the \emph{$(R,\epsilon)$-panted cobordism group} of $M$ is defined to be:
		$$\ocobordism_{R,\epsilon}(M)\,=\,\Integral\ocurves_{R,\epsilon}(M)\,/\,\partial(\Integral\opants_{R,\epsilon}(M)).$$
		The group $\ocobordism_{R,\epsilon}(M)$ can be fully characterized by the following correspondence \cite[Theorem 5.2]{Liu-Markovic}.
		We denote by $\SO(M)$ the $\SO(3)$--principal bundle over $M$ of orthonormal frames with right orientation.
				
		\begin{theorem}\label{goodPantedCobordismGroupLM}
			For any sufficiently small positive constant $\epsilon$ with respect to $M$, 
			and any sufficiently large positive constant $R$ with respect to $M$ and $\epsilon$,
			there exists a canonical isomorphism:
				$$\Phi\colon \ocobordism_{R,\epsilon}(M)\longrightarrow H_1(\SO(M);\,\Integral).$$
			Moreover, for all $\gamma\in\ocurves_{R,\epsilon}(M)$,
			the projection of $\Phi(\gamma)$ in $H_1(M;\,\Integral)$ equals the homology class of $\gamma$.
		\end{theorem}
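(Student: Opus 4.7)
The plan is to construct $\Phi$ by sending each $(R,\epsilon)$-curve to the class of its canonical lift in $H_1(\SO(M);\Integral)$, verify descent across pants boundaries, and then establish bijectivity through independent surjectivity and injectivity arguments.

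First, on the free module $\Integral\ocurves_{R,\epsilon}(M)$ I would define $\Phi(\gamma)=[\hat\gamma]$ and extend $\Integral$-linearly. The canonical lift is well defined up to homotopy in $\SO(M)$ because any residual ambiguity in the normal component of the frame lies in the fiber $\SO(3)$, whose $\pi_1$ maps consistently into $H_1(\SO(M);\Integral)$. To see descent to $\ocobordism_{R,\epsilon}(M)$, I would verify $\Phi(\partial\Pi)=0$ for every $(R,\epsilon)$-pants $\Pi$. Using that $\Pi$ is nearly totally geodesic with feet $\epsilon$-close to the parallel transport of their paired feet, I would construct a smooth frame field over the immersed pants (e.g. by perturbing the intrinsic Frenet-type framing of the nearby totally geodesic model) whose boundary in $\SO(M)$ is freely homotopic to the sum of the canonical lifts of the three cuffs. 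This supplies an explicit 2-chain witnessing the null-homology. The compatibility statement $\mathrm{pr}_*\Phi(\gamma)=[\gamma]\in H_1(M;\Integral)$ is then immediate from $\mathrm{pr}\circ\hat\gamma=\gamma$ and naturality.

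Second, I would dispatch surjectivity by realizing a generating set of $H_1(\SO(M);\Integral)$. Since $M$ is parallelizable, the frame bundle splits topologically and one has a short exact sequence $0\to\Integral/2\to H_1(\SO(M);\Integral)\to H_1(M;\Integral)\to 0$, with kernel generated by a fiber class $[\hat c]$. Any class in $H_1(M;\Integral)$ is representable by a good curve built with the connection principle (Proposition \ref{connectionPrinciple}) by chaining long geodesic segments along a representative cycle and closing up. The fiber class is realized by a commutator-type good curve of the form $ab\bar a\bar b$ as sketched in the introduction, which is null-homologous in $M$ but whose canonical lift carries the $\Integral/2$ defect.

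Third, and this is the main obstacle, I would prove injectivity: if $\Phi\bigl(\sum n_i\gamma_i\bigr)=0$, then the formal combination $\sum n_i\gamma_i$ must lie in the image of $\partial$. The mechanism is an equidistribution principle driven by exponential mixing of the frame flow: for $R$ large with respect to $\epsilon$ and $M$, the feet of $(R,\epsilon)$-pants along any fixed $(R,\epsilon)$-curve are nearly equidistributed on its unit normal bundle, with joint control across distinct cuffs. Combining this with a 2-chain in $\SO(M)$ witnessing the vanishing of $\Phi\bigl(\sum n_i\gamma_i\bigr)$, one decomposes the chain into small pieces, realizes each by a formal combination of $(R,\epsilon)$-pants with prescribed boundary data, and aggregates to cancel the original formal sum of curves. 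The hard part is to perform the matching \emph{exactly}, so that both the complex-length tolerance $\epsilon$ and the feet-alignment tolerance are simultaneously met at every internal cuff; this requires sharp quantitative counting of good pants with prescribed feet and forms the technical heart of the proof in \cite{Liu-Markovic}.
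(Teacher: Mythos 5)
The step that fails is your verification that $\Phi$ vanishes on boundaries of good pants. You propose to build a frame field over an immersed pair of pants $\Pi$ by perturbing the adapted (Frenet-type) framing of the nearby totally geodesic model, and to observe that its restriction to $\partial\Pi$ is freely homotopic to the sum of the canonical lifts of the three cuffs. No such frame field exists: along each cuff of a good pants the adapted framing agrees, up to small error, with the parallel-transport framing, and a section of the pulled-back frame bundle over $\Pi$ restricting to that framing on $\partial\Pi$ is obstructed by Poincar\'e--Hopf, since a vector field on a pair of pants that is tangent to the boundary has total index $\chi(\Pi)=-1\neq 0$; reduced mod $2$ this is the nontrivial element of $\pi_1(\SO(3))\cong\Integral_2$. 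The correct statement is that the sum of the three \emph{uncorrected} lifts (parallel transport closed up by the short fiber path) is homologous to the nontrivial fiber class $[\hat c]\in H_1(\SO(M);\Integral)$, not to zero. This is exactly why the canonical lift $\hat\gamma$ is defined, in the paper and in \cite{Liu-Markovic}, with an extra fiber loop representing $\hat c$ inserted: the three insertions contribute $3[\hat c]=[\hat c]$ and cancel the $\chi(\Pi)$-defect, giving $\Phi(\partial\Pi)=0$. Your proposal never records this insertion, so either your $\hat\gamma$ is the uncorrected lift, in which case $\Phi$ sends every pair of pants to $[\hat c]$ and does not descend to $\ocobordism_{R,\epsilon}(M)$, or it is the paper's canonical lift, in which case your frame-field argument proves the wrong parity and the descent verification collapses.

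For context, the paper does not prove this theorem at all: it is quoted from \cite{Liu-Markovic} (Theorem 5.2 there), and the only content the paper supplies is the definition of $\Phi$ via the canonical lift, with the explicit remark that the inserted fiber loop is what makes $\Phi$ vanish on pants boundaries --- precisely the bookkeeping your sketch misses. Your surjectivity outline (good curves realizing each class of $H_1(M;\Integral)$ via the connection principle, plus a commutator-type curve realizing $[\hat c]$, using $H_1(\SO(M);\Integral)\cong H_1(M;\Integral)\oplus\Integral_2$) is consistent with how such classes are in fact realized, and you correctly identify injectivity as the technical heart; but there you only gesture at an equidistribution-and-matching scheme and ultimately defer to \cite{Liu-Markovic}, where the actual argument proceeds by a rather different sequence of explicit pants moves and an algebraic computation of the panted cobordism group, so that part of your proposal remains a placeholder rather than an argument.
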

		
		For the goal of this paper, it is important to understand 
		the implementation of $\Phi$. 
		It suffices to specify the assignment of $\Phi$ to each curve $\gamma\in\ocurves_{R,\epsilon}(M)$.
		In fact, the homology class $\Phi(\gamma)$ can be 
		represented by the \emph{canonical lift} of the curve $\gamma$,
		denoted as
			$$\hat\gamma\in\pi_1(\SO(M)),$$
		and $\hat{\gamma}$ is defined as follows.
				
		Suppose that $\gamma$ is an $(R,\epsilon)$--curve. Take a special orthonormal frame 
		$\mathbf{p}=(\vec{t}_p,\vec{n}_p,\vec{t}_p\times\vec{n}_p)\in\SO(M)$
		at a point $p\in\gamma$ such that $\vec{t}_p$ is the unit tangent vector of $\gamma$ at $p$.
		Here the cross notation stands for the cross product for the oriented tangent space $T_pM$ with
		the Riemannian inner product.
		For sufficiently large $R$ with respect to $M$ and $\epsilon$, the parallel transport
		of $\mathbf{p}$ along $\gamma$ back to $p$ is a frame $\mathbf{p}'$ which can be connected
		with $\mathbf{p}$ by a unique shortest path in $\SO(M)|_p$.
		Then the canonical lift $\hat\gamma$ can be represented by the closed path 
		which is the concatenation of three	consecutive subpaths: The first is the parallel-transportation path 
		from $\mathbf{p}$ to $\mathbf{p}'$ along $\gamma$; the second is a closed path in $\SO(M)|_p$ based at
		$\mathbf{p}'$ which represents the unique nontrivial central element
			$$\hat{c}\in\pi_1(\SO(M));$$
		and the third is the shortest path in $\SO(M)|_p$ connecting $\mathbf{p}'$ and $\mathbf{p}$.
		The resulting loop of frames does not depend on 
		the auxiliary choices	up to free homotopy in $\SO(M)$, so $\hat\gamma\in\pi_1(\SO(M))$ is well defined.
		Note that the second sub-path above ensures that $\Phi$ vanishes on boundary of pants.
				
		It turns out that elements of $\ocobordism_{R,\epsilon}(M)$ 
		are the only obstructions to solving the relative construction problem.
		For simplicity, we state a special case as follows. 
		The case with multicurve boundary is completely analogous, 
		see \cite{Liu-Markovic}, \cite[Corollary 2.7]{Sun-virtualDomination}.
				
		\begin{theorem}\label{solutionRelative}
			For any sufficiently small positive constant $\epsilon$ with respect to $M$, 
			and any sufficiently large positive constant $R$ with respect to $M$ and $\epsilon$,
			the following statement holds true. 
			
			For any curve $\gamma\in\ocurves_{R,\epsilon}(M)$, 
			there exists an oriented, connected, $\pi_1$--injectively immersed
			quasi--Fuchsian subsurface of $M$ which is $(R,\epsilon)$--panted and bounded by $\gamma$
			if and only if the canonical lift $\hat\gamma\in\pi_1(M)$ is null homologous in $\SO(M)$.
		\end{theorem}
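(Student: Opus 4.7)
The proof splits naturally into the necessity and sufficiency directions, and in both directions the work is carried by the cobordism group isomorphism of Theorem \ref{goodPantedCobordismGroupLM}. For necessity, suppose that $\gamma$ bounds an oriented $(R,\epsilon)$-panted subsurface $\Sigma$ assembled from pants $P_1,\ldots,P_k\in\opants_{R,\epsilon}(M)$. Then by construction $\gamma=\partial(P_1+\cdots+P_k)$ in $\Integral\ocurves_{R,\epsilon}(M)$, because every non-$\gamma$ cuff appears twice with opposite orientations and thus cancels in the free abelian group. Hence the class of $\gamma$ vanishes in $\ocobordism_{R,\epsilon}(M)$, and the canonical isomorphism $\Phi$ forces $[\hat\gamma]=0$ in $H_1(\SO(M);\Integral)$.

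For sufficiency, suppose $[\hat\gamma]=0$ in $H_1(\SO(M);\Integral)$. Theorem \ref{goodPantedCobordismGroupLM} yields a formal identity $\gamma=\partial\bigl(\sum_i n_i P_i\bigr)$ in $\Integral\ocurves_{R,\epsilon}(M)$ with $n_i\in\Integral$ and $P_i\in\opants_{R,\epsilon}(M)$. First I would clear signs: since the class of $(R,\epsilon)$-pants is closed under orientation reversal, any term $-P_i$ may be rewritten as the same underlying pair of pants equipped with the opposite orientation, so after relabelling we may assume all $n_i>0$. The substantive task is then to promote the multiset $\{n_iP_i\}$ into an honest gluing in which each internal cuff is glued to an oppositely oriented partner from the multiset with shearing parameter approximately $1$, while the only free cuff is $\gamma$.

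This cuff-matching step is where the main obstacle lies and where the Kahn--Markovic equidistribution principle becomes indispensable. For sufficiently large $R$, the pants in $\opants_{R,\epsilon}(M)$ incident to any given $(R,\epsilon)$-curve have feet that are nearly uniformly distributed on its unit normal bundle; consequently, after multiplying the formal sum by a large positive integer $N$ (which preserves the boundary relation), a Hall-type marriage argument pairs the feet on each internal cuff with offset approximately $1$. The resulting assembled surface $\Sigma$ is oriented, $(R,\epsilon)$-panted, and bounded by $\gamma$, and the Kahn--Markovic shear inequality then ensures that $\Sigma$ is $\pi_1$-injective and quasi-Fuchsian. Connectedness comes for free: the component of $\Sigma$ containing $\gamma$ is itself an $(R,\epsilon)$-panted quasi-Fuchsian subsurface with boundary $\gamma$, while any other component is closed and can simply be discarded. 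This is essentially the strategy carried out in \cite[Theorem 5.2]{Liu-Markovic} and \cite[Corollary 2.7]{Sun-virtualDomination}.
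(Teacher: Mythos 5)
Your overall strategy---necessity via the isomorphism $\Phi$ of Theorem \ref{goodPantedCobordismGroupLM}, sufficiency by converting a formal identity in $\Integral\opants_{R,\epsilon}(M)$ into an actual gluing controlled by equidistribution of feet---is exactly the route of the sources the paper cites for this statement (\cite[Theorem 5.2]{Liu-Markovic}, \cite[Corollary 2.7]{Sun-virtualDomination}); the paper itself gives no proof and treats the theorem as known. However, your execution of the cuff-matching step has a genuine gap. You propose to ``multiply the formal sum by a large positive integer $N$, which preserves the boundary relation.'' It does not: $\partial\bigl(N\sum_i n_iP_i\bigr)=N\gamma$, so the assembled surface would be bounded by $N$ copies of $\gamma$ rather than by $\gamma$. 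Worse, scaling by $N$ does nothing for equidistribution: if the feet of the pants of your multiset adjacent to some internal cuff $c$ are clustered in one region of the unit normal bundle of $c$ while the feet from the other side are clustered elsewhere, they remain so after multiplication by $N$, and the Hall-type matching with shear approximately $1$ can fail outright. Near-equidistribution of feet is a property of the \emph{full} collection of $(R,\epsilon)$--pants adjacent to a given good curve, not of an arbitrary sub-multiset extracted from the identity $\gamma=\partial\bigl(\sum_i n_iP_i\bigr)$. The correct move, as in the cited sources, is to \emph{add} to the positive combination $N$ copies of $P+\bar P$ for every $P\in\opants_{R,\epsilon}(M)$: since $\partial(P+\bar P)$ consists entirely of matched pairs $c+\bar c$, this keeps the boundary equal to $\gamma$ plus matched pairs, while for $N$ large the foot measure on each side of each internal cuff is dominated by the equidistributed contribution, and the marriage argument then applies.

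A smaller but related imprecision: in the necessity direction the internal cuffs do not cancel in the free abelian group $\Integral\ocurves_{R,\epsilon}(M)$---an oriented curve and its orientation reversal are distinct generators there, so $\partial(P_1+\cdots+P_k)$ equals $\gamma$ plus terms of the form $c+\bar c$. To conclude that the class of $\gamma$ vanishes in $\ocobordism_{R,\epsilon}(M)$ you need the fact, proved in \cite{Liu-Markovic}, that $[\bar c]=-[c]$ in the panted cobordism group. The same point affects your sign-clearing step: replacing $-P$ by $\bar P$ changes the boundary by $\partial P+\partial\bar P$, again a sum of matched pairs, so the step is salvageable---but the output is a positive combination whose boundary is $\gamma$ plus matched pairs rather than $\gamma$ itself, and it is precisely this form (together with the augmentation described above) that feeds into the gluing.
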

		
		Note that Theorem \ref{solutionRelative} does not say anything about the
		existence of $(R,\epsilon)$-curves
		with homologically trivial canonical lifts.
		The existence follows from Theorem \ref{constructGradedGoodCurves},
		which is proved in the rest of this paper.
		
		We finish this section with the following finer version of the good pants method
		that we have discussed so far. 
		This has been pointed out by \cite[Remark 2.9]{Sun-virtualDomination},
		see also \cite{Saric}, as a consequence of the exponential mixing rate of the frame flow. 
		In the rest of this paper, we adopt the finer version
		so as to invoke some estimations from \cite{Sun-virtualTorsion}.
		
		\begin{declaration}\label{finerError}
		All the notations and results that have appeared in this section remain compatible and true with
		the constant $\epsilon$ replaced by $\epsilon/R$, (and $\delta$ replaced by $\delta/L$).
		\end{declaration}

	\section{Connecting frames in a prescribed homology class}\label{Sec-connectingFrames}
		
		Let $M$ be an oriented closed hyperbolic $3$-manifold. Denote by $\SO(M)$ the special orthonormal frame bundle over $M$.
		Let	$\mathbf{p}$ and $\mathbf{q}$
		be a pair of (possibly coincident) frames in $\SO(M)$
		at points $p$ and $q$ of $M$, respectively.
		Denote by
			$$\pi_1(\SO(M),\mathbf{p},\mathbf{q})$$
		the set of relative homotopy classes of paths in $\SO(M)$ from $\mathbf{p}$ to $\mathbf{q}$.
		Each element $\hat{\xi}$ of $\pi_1(\SO(M),\mathbf{p},\mathbf{q})$ represents a relative homology class
		$[\hat\xi]\in H_1(\SO(M),\mathbf{p}\cup\mathbf{q};\,\Integral)$ with the property
		$\partial_*[\hat\xi]=[\mathbf{q}]-[\mathbf{p}]$, where
			$$\partial_*\colon H_1(\SO(M),\mathbf{p}\cup\mathbf{q};\,\Integral)\longrightarrow H_0(\mathbf{p}\cup\mathbf{q};\,\Integral)$$
		is the induced homomorphism of the relative pair $(\SO(M),\mathbf{p}\cup\mathbf{q})$.
		Since the preimage $\partial_*^{-1}([\mathbf{q}]-[\mathbf{p}])$ is an affine $H_1(\SO(M);\Integral)$,
		one may think of the assignment $\hat\xi\mapsto[\hat\xi]$ as a model of 
		a natural relative grading for $\pi_1(\SO(M),\mathbf{p},\mathbf{q})$ such that the grading differences between elements
		are valued in $H_1(\SO(M);\,\Integral)$.
		In this way, $\pi_1(\SO(M),\mathbf{p},\mathbf{q})$ decomposes naturally into the disjoint union
		of its grading classes.
		
		The following theorem is an enhanced version of the connection principle which respects gradings.
								
		\begin{theorem}\label{gradedConnectionPrinciple}
			Let $M$ be an oriented closed hyperbolic $3$-manifold.
			Let	$\mathbf{p}=(\vec{t}_p,\vec{n}_p,\vec{t}_p\times\vec{n}_p)$ and $\mathbf{q}=(\vec{t}_q,\vec{n}_q,\vec{t}_q\times\vec{n}_q)$
			in $\SO(M)$ be a pair of special orthonormal frames
			at points $p$ and $q$ of $M$, respectively.
			Let $\Xi\in H_1(\SO(M),\mathbf{p}\cup\mathbf{q};\,\Integral)$ be a relative
			homology class with boundary $\partial_*[\hat\xi]=[\mathbf{q}]-[\mathbf{p}]$.
			
			Given any positive constant $\delta$,
			and for every sufficiently large positive constant $L$ with respect to
			$M$, $\Xi$, and $\delta$, there exists a geodesic path $s$ in $M$ from $p$ to $q$ with the following properties:
			\begin{itemize}
				\item The length of $s$ is $(\delta/L)$--close to $L$. The initial direction of $s$ is $(\delta/L)$--close 
				to $\vec{t}_p$ and the terminal direction of $s$ is $(\delta/L)$--close 
				to $\vec{t}_q$.
				\item The parallel transport from $p$ to $q$ along $s$ takes $\mathbf{p}$ to a frame
				$\mathbf{q}'$ which is $(\delta/L)$--close to $\mathbf{q}$, and there exists a unique shortest
				path in $\SO(M)|_q$ between $\mathbf{q}'$ and $\mathbf{q}$. 
				\item Denote by $\hat{s}$ the path which is the concatenation of the parallel-transport
				path from $\mathbf{p}$ to $\mathbf{q}'$
				with the shortest path from $\mathbf{q}'$ to $\mathbf{q}$.
				The relative homology class represented by $\hat{s}\in\pi_1(\SO(M),\mathbf{p},\mathbf{q})$ equals $\Xi$.
			\end{itemize}		
		\end{theorem}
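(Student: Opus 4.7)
The plan is to reduce to an absolute case and build the required geodesic as the geodesic representative of a piecewise-geodesic path whose pieces realize a fixed finite generating set of $H_1(\SO(M);\Integral)$. For the reduction, I would first apply Proposition \ref{connectionPrinciple} in the finer form of Declaration \ref{finerError} to produce a geodesic arc from $p$ to $q$ meeting the length and frame-matching tolerances; its canonical lift represents some relative class $\Xi_0 \in H_1(\SO(M),\mathbf{p}\cup\mathbf{q};\Integral)$. By the grading structure of $\pi_1(\SO(M),\mathbf{p},\mathbf{q})$, it then suffices to prepend at $\mathbf{p}$ a long geodesic loop whose canonical lift represents the absolute class $\Xi-\Xi_0\in H_1(\SO(M);\Integral)$ and splice it onto the previous arc via another application of the connection principle. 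With care, length slack can be redistributed so that the total length is $(\delta/L)$-close to $L$.

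For the absolute case, I would exploit the exact sequence
\[
\langle\hat{c}\rangle \longrightarrow H_1(\SO(M);\Integral) \longrightarrow H_1(M;\Integral) \longrightarrow 0
\]
coming from the $\SO(3)$-bundle $\SO(M)\to M$, where $\hat{c}$ denotes the central $2$-torsion element of $\pi_1(\SO(M))$. Thus it suffices to realize (i) each generator of a chosen finite generating set of $H_1(M;\Integral)$ and (ii) the class $\hat{c}$ itself. For (i), a long geodesic loop at $p$ approximating a fixed closed geodesic representative, produced by Proposition \ref{connectionPrinciple}, has canonical lift projecting to the desired class in $H_1(M;\Integral)$, i.e.\ representing the correct class \emph{modulo} $\hat{c}$. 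For (ii), one uses the commutator construction: two long geodesic loops $a,b$ at $p$, both nearly preserving the frame $\mathbf{p}$, concatenate as $ab\bar{a}\bar{b}$ into a loop whose canonical lift equals $\hat{c}$ by the fundamental calculation of Lemma \ref{estimateCommutator}. A bounded number of such blocks (controlled by the word length of $\Xi$ in the fixed generating set) may be concatenated with near-collinear joints supplied by Proposition \ref{connectionPrinciple}, and the geodesic representative of the resulting piecewise-geodesic path is a single geodesic loop of length close to $L$ representing the prescribed class.

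The hard part is resolving the fiber-class ambiguity in (i): the canonical lift there is only determined modulo $\hat{c}$, so one needs a systematic way to add or cancel a copy of $\hat{c}$ without disturbing the rest of the bookkeeping. The elegant remedy, to be formalized as Proposition \ref{substituteFrameSpinningPath} and sketched in the introduction, is the misaligned-commutator trick: if the normal vectors preserved by $a$ and $b$ are rotated by a small angle $\theta$ about $\vec{t}_p$ relative to each other, then $ab\bar{a}\bar{b}$ is relatively homologous in $\SO(M)$ to a short path of frames spinning about $\vec{t}_p$ by an angle proportional to $\theta$; iterating $k$ times produces spin proportional to $k\theta$, so by choosing $k\theta$ close to a suitable multiple of $\pi$ one can make the accumulated spin in the fiber $\SO(M)|_p\cong\SO(3)$ represent either the trivial element or $\hat{c}$ at will. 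The technical core will be to quantify how parallel transport around a commutator depends on small angular perturbations of the invariant normals, and to coordinate the repeated connection-principle tolerances with the accumulated angular estimate so that the net class in $\pi_1(\SO(3))\cong\Integral/2\Integral$ is pinned down exactly. Careful error budgeting — small $\theta$, $k$ uniformly bounded in terms of $\Xi$, and each $a,b$ long enough for the connection principle to apply with tiny tolerance — will be needed to prevent errors from compounding across iterations.
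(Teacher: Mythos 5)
Your overall architecture is essentially the paper's: realize the projection of $\Xi$ to $H_1(M;\Integral)$ by explicit geodesic representatives spliced together via the connection principle, and then resolve the remaining order-two ambiguity in the fiber direction by the misaligned-commutator/frame-spinning substitution, which is exactly Proposition \ref{substituteFrameSpinningPath} and the two candidates $z_\uparrow,z_\downarrow$ differing by $[\hat c]$ in the paper's recipe. (The paper realizes the $H_1(M,p\cup q;\Integral)$ projection by a single geodesic arc $w$ rather than a word in generators, and appends the spinning correction at $q$; these differences are cosmetic.)

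One caution on your item (ii): splicing an \emph{aligned} commutator $ab\bar a\bar b$ into a longer path contributes the \emph{trivial} class to $[\hat s]$, not $[\hat c]$ --- that is precisely the content of Lemma \ref{estimateCommutator}, which says the parallel-transport cycle of the aligned commutator is null homologous in $\SO(M)$. The statement that its canonical lift equals $\hat c$ reflects only the extra $\hat c$ inserted by the \emph{definition} of the canonical lift of a closed good curve, and that insertion happens once per closed curve, so it cannot be used to shift $[\hat s]$ by $[\hat c]$. The only working mechanism for toggling the $\hat c$ component is the one you describe last: total accumulated spin $2\pi$ versus $4\pi$ in the iterated misaligned commutators, i.e.\ Proposition \ref{substituteFrameSpinningPath} applied with $\phi$ versus $\phi+2\pi$. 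Since your final paragraph adopts exactly that mechanism, the plan goes through.
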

		
		Note that when $\mathbf{p}$ coincides with $\mathbf{q}$, the union $\mathbf{p}\cup\mathbf{q}$
		is understood as a single point in $\SO(M)$.

		In the rest of this section, we prove Theorem \ref{gradedConnectionPrinciple} by construction.
		We fix an oriented closed hyperbolic $3$-manifold $M$ throughout this section.
				
		\subsection{Basic calculations for parallel-transport paths of frames}
		We start by three calculations for the homology classes of frame paths.
		The first calculation tells us that
		if we concatenate a consecutive chain 
		of long geodesic paths in $M$ with small total bending,
		then parallel transport of any frame along the concatenated path or its reduction yields 
		almost the same path of frames up to homotopy.
		The second calculation is parallel to the first one, for the reduction of
		the cyclic concatenation of a consecutive cycle.
		The third calculation considers a special situation that  
		if we concatenate a consecutive commutator chain of 
		long geodesic paths in $M$ with small bending and twisting,
		then parallel transport of any frame along the concatenated path 
		yields a path of frames which is almost closed and null homologous in the frame bundle.
		
		By a \emph{consecutive chain} of geodesic paths in $M$, we mean a finite sequence of oriented geodesic paths
		such that the terminal endpoint of any member of the sequence, except the final one,
		is the initial endpoint of its successor. The \emph{reduced concatenation} of a consecutive chain
		is the unique geodesic path in $M$ 
		which is homotopic to the concatenation of the chain relative to the endpoints.
		A \emph{consecutive cycle} of geodesic paths is a chain whose last member
		has its terminal endpoint the same as the initial point of the first one.
		The \emph{reduced cyclic concatenation} is the unique geodesic loop without base point
		which is freely homotopic to the cyclic concatenation of the cycle.
		
		We do not attempt to make the most economic choices 
		for universal constants	in the estimates. 
		Instead, powers of ten are often used,
		and the power roughly counts 
		the number of basic steps that have been taken.
		
		\begin{lemma}\label{estimateReduction}
			Let $s_1,\cdots,s_m$ be a consecutive chain of $m$ geodesic paths in $M$. Suppose that the length of each $s_i$
			is at least $L$, and suppose that the terminal direction of each $s_i$, except $s_m$, is $\delta$--close 
			to the initial direction of $s_{i+1}$. Denote by $s$ the reduced concatenation of $s_1,\cdots,s_m$.
			Let $\mathbf{p}$ be a frame in $\SO(M)$ at the initial endpoint $p$ of $s_1$.
			Denote by $\mathbf{q}$ and $\mathbf{q'}$
			the parallel transport of $\mathbf{p}$
			along the concatenation of $s_1,\cdots,s_m$ 
			and along $s$ to the terminal endpoint $q$ of $s_m$, respectively.
			
			Then for universally small $(m\delta)$ and sufficiently large $L$ with respect to $\delta$, 
			there exists a unique shortest path in $\SO(M)|_q$ from $\mathbf{q}'$ to $\mathbf{q}$. 
			Moreover, the parallel-transport path from $\mathbf{p}$ to $\mathbf{q}$ is homotopic to
			the concatenation of the parallel-transport path from $\mathbf{p}$ to $\mathbf{q}'$
			and the shortest path in $\SO(M)|_q$ from $\mathbf{q}'$ to $\mathbf{q}$.
		\end{lemma}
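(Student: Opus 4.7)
The plan is to prove the lemma by induction on $m$, straightening the last bending point at each step. The base case $m=1$ is trivial. For $m \geq 2$, I would replace the last two segments $s_{m-1}, s_m$ by their reduced concatenation $s_{m-1,m}$, forming a chain of length $m-1$, and compare the parallel transports along the two paths by means of the thin geodesic triangle $T$ they bound in a small ball of the universal cover of $M$.

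The quantitative input is a standard thin-triangle estimate. The triangle $T$ has one interior angle at least $\pi - \delta$ at the bending point, so by the hyperbolic Gauss--Bonnet formula its area is at most $\delta$, and the law of sines forces the other two angles, as well as the angular perturbations of the endpoints of $s_{m-1,m}$ relative to those of $s_{m-1} \ast s_m$, to be of order $\delta \, e^{-L}$. Consequently, after one straightening step the shorter chain still satisfies the length-$L$ and bending-$\delta$ hypotheses of the lemma, up to an additive error $O(\delta e^{-L})$ that is absorbed into a slightly enlarged bending budget without affecting the induction. On the frame side, in constant curvature $-1$ the holonomy around $T$ is a rotation whose angle equals the area of $T$, in the $2$-plane tangent to $T$; hence the two parallel-transport endpoints at $q$ differ by a rotation of angle at most $\delta$ per straightening step, and at most $m\delta$ in total.

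To upgrade this computation into the asserted homotopy statement in $\SO(M)$, I would apply the horizontal lifting property of the Levi-Civita connection to the straightening homotopy $H \colon [0,1]^2 \to M$ sweeping out $T$, producing a homotopy $\tilde H \colon [0,1]^2 \to \SO(M)$ whose top and bottom edges are the parallel-transport paths along $s_1 \ast \cdots \ast s_m$ and along $s_1 \ast \cdots \ast s_{m-2} \ast s_{m-1,m}$, whose left edge is constant at $\mathbf{p}$, and whose right edge is a holonomy path in $\SO(M)|_q$ of length bounded by the area of $T$. Iterating over all $m$ bending points accumulates a single fiber path from $\mathbf{q}'$ to $\mathbf{q}$ of total length at most $m\delta$. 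Provided $m\delta$ is universally small, say less than $\pi$, this path lies in a convex geodesic ball of $\SO(M)|_q \cong \SO(3)$, so the unique shortest path between $\mathbf{q}'$ and $\mathbf{q}$ exists and is homotopic rel endpoints to the accumulated holonomy path. Concatenating that fiber homotopy with the iterated horizontal-lift homotopies yields the required homotopy in $\SO(M)$.

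The main obstacle will be the bookkeeping across the induction: at every step one must verify that the residual chain still satisfies the length and bending assumptions so that the thin-triangle estimates remain valid, and that the accumulated fiber path stays within a single convex ball of $\SO(M)|_q$. Both requirements reduce to careful tracking of the $O(\delta e^{-L})$ perturbation terms together with the universal smallness of $m\delta$; no new geometric ingredient beyond the thin-triangle estimate and the horizontal-lift property of the Levi-Civita connection should be needed.
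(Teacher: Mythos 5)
Your proposal is correct and takes essentially the same route as the paper: the paper likewise reduces to the two-segment case, sweeps out the thin geodesic triangle by a one-parameter family of broken geodesics (its concrete realization, via the altitude of the triangle, of your horizontal-lift homotopy), bounds the resulting fiber path by the triangle's area so that it lands in an embedded ball of $\SO(M)|_q$ and is homotopic there to the unique shortest path, and handles general $m$ by triangulating the polygon into $m-1$ triangles --- the same decomposition your iterated straightening produces. The only cosmetic difference is that the paper outsources the $O(\delta e^{-L})$ bookkeeping on the lengths and bending angles of the diagonals to \cite[Lemma 4.8 (a)]{Liu-Markovic}, which you instead carry out by hand with the thin-triangle estimate.
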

		
		\begin{proof}
			First consider the basic case if $m$ equals $2$. Then the consecutive geodesic paths $s_1$, $s_2$
			spans a unique geodesic $2$-simplex $T$, which is immersed in $M$,
			and the reduced concatenated path $s$ is geometrically the third edge of $T$.
			For universally small $\delta$ and sufficiently large $L$ with respect to $\delta$,
			the area of $T$ can be bounded by
			$2\delta$. As parallel transport takes any frame $\mathbf{p}\in\SO(M)|_p$
			along	the chain $s_1,s_2$ to $\mathbf{q}\in\SO(M)|_q$, and along $s$ to $\mathbf{q}'\in\SO(M)|_q$,
			it follows that the distance between $\mathbf{q}$ and $\mathbf{q}'$ can be bounded by $10\delta$.
			We may in addition require that $\delta$ is so small that 
			that any ball of radius $100\delta$ in $\SO(M)|_q$ is embedded.
			Then there is a unique path of the shortest length in $\SO(M)|_q$ connecting $\mathbf{q}'$ and $\mathbf{q}$,
			which we denote as $\hat{\eta}$.
			
			To see the claimed homotopy in this case,
			let $h\colon [0,1]\to T$ be the altitude of $T$ on the side $s$ 
			so that $h(1)$ is the joint point of $s_1$ and $s_2$,
			and $h(0)$ lies on $s$. For a parameter $t\in [0,1]$,
			consider the $t$-family of consecutive chains $s_1(h(t))$, $s_2(h(t))$ in $T$
			which is formed by the two geodesic segments $[p,h(t)]$ and $[h(t),q]$.
			Denote by $\hat{\xi}_t$ the path of frames 
			given by the parallel transport of $\mathbf{p}$ along 
			the concatenation of $s_1(h(t)),s_2(h(t))$. 
			As $t\in[0,1]$ varies, the endpoint of $\hat{\xi}_t$ in $\SO(M)|_q$ 
			gives rise to a path $\hat{\eta}'$ from $\mathbf{q}'$ to $\mathbf{q}$.
			It follows from the construction of $\hat{\xi}_t$ that $\hat{\xi}_0\hat{\eta}'$ is homotopic
			to $\hat{\xi}_1$, the parallel-transport path along the concatenated chain $s_1,s_2$.
			On the other hand, the distance estimation as above can also be applied for pairs of points
			on $\hat{\eta}'$, so $\hat{\eta}$ has diameter bounded by $10\delta$ in $\SO(M)|_q$. It follows
			that $\hat{\eta}'$ can be homotoped to $\hat{\eta}$ within $\SO(M)|_q$.
			Then $\hat{\xi}_1$ is homotopic to $\hat{\xi}_0\hat{\eta}$, which proves the basic case.
			
			The general case can be done by applying the basic case for $m-1$ times. In other words,
			we triangulate the polygon formed by the consecutive chain $s_1,\cdots,s_m$ and the reduced concatenation
			$s$ by adding diagonals, and then apply the basic case to each geodesic $2$-simplex that fills a triangle.
			(See \cite[Lemma 4.8 (a)]{Liu-Markovic} for the geometry of the diagonals.)
			Then the total area of the $2$-simplices is bounded by about $m\delta$.
			When this quantity
			is small enough, $\mathbf{q}'$ and $\mathbf{q}$ can be connected by a uniquely shortest path in $\SO(M)|_q$,
			of length at most $100m\delta$. We omit the details because the estimation is straightforward.
		\end{proof}
		
		\begin{lemma}\label{estimateCyclicReduction}
			Let $s_1,\cdots,s_m$ be a consecutive cycle of $m$ geodesic paths in $M$. 
			Suppose that the length of each $s_i$
			is at least $L$, and suppose that the terminal direction of each $s_i$ is $\delta$--close 
			to the initial direction of $s_{i+1}$, where $s_{m+1}$ means $s_1$.
			Denote by $\gamma$ the reduced cyclic concatenation of $s_1,\cdots,s_m$.
			Let $\mathbf{p}$ be a frame in $\SO(M)$ at the initial endpoint $p$ of $s_1$.
			Denote by $\mathbf{p}'$ the parallel transport of $\mathbf{p}$
			along the cyclic concatenation of $s_1,\cdots,s_m$ back to $p$. 
			Let $\mathbf{q}$ be a frame in $\SO(M)$ at a point $q'\in\gamma$.
			Denote by $\mathbf{q}'$ the parallel transport of $\mathbf{q}$
			along $\gamma$ back to $q$.
			
			Then for universally small $(m\delta)$ and sufficiently large $L$ with respect to $\delta$, 
			there exist unique shortest paths in $\SO(M)|_p$ from $\mathbf{p}'$ to $\mathbf{p}$,
			and in $\SO(M)|_q$ from $\mathbf{q}'$ to $\mathbf{q}$.
			Moreover, the cyclic concatenation of 
			parallel-transport path from $\mathbf{p}$ to $\mathbf{p}'$ with 
			the shortest path in $\SO(M)|_p$ from $\mathbf{p}'$ to $\mathbf{p}$
			is freely homotopic to
			the cyclic concatenation of
			the parallel-transport path from $\mathbf{q}$ to $\mathbf{q}'$ with
			and the shortest path in $\SO(M)|_q$ from $\mathbf{q}'$ to $\mathbf{q}$.
		\end{lemma}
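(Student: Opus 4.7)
The plan is to reduce to Lemma \ref{estimateReduction} applied to $s_1,\ldots,s_m$ viewed as a non-cyclic chain from $p$ back to $p$, and then to transfer the conclusion at $\mathbf{p}$ to the frame $\mathbf{q}$ at any other point $q\in\gamma$ by a base-point change argument for free homotopy classes of loops in the principal bundle $\SO(M)$.

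First, I view the cyclic chain $s_1,\ldots,s_m$ as a (non-cyclic) chain from $p$ to $p$. All internal joint-angle hypotheses $s_i\to s_{i+1}$ for $1\le i\le m-1$ hold by assumption, so Lemma \ref{estimateReduction} yields a unique geodesic path $s^\circ$ from $p$ to $p$ representing the rel-$p$ homotopy class of the cyclic concatenation, a unique shortest path in $\SO(M)|_p$ from the parallel-transport image $\mathbf{p}''$ of $\mathbf{p}$ along $s^\circ$ to $\mathbf{p}'$, and a homotopy between the two parallel-transport paths of frames. The additional cyclic hypothesis at the wrap-around $s_m\to s_1$ forces the initial and terminal tangent vectors of $s^\circ$ at $p$ to differ by at most $O(\delta)$, so $s^\circ$ is an approximate closed geodesic loop based at $p$. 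Taking $p_*\in\gamma$ to be the point of $\gamma$ closest to $p$ and $\alpha$ a short geodesic from $p$ to $p_*$, the loop $s^\circ$ and the based representative $\alpha\gamma\alpha^{-1}$ cobound a region in $M$ of area $O(\delta)$, bounded by the same $2$-simplex filling technique as in the basic $m=2$ case of Lemma \ref{estimateReduction}. This identifies $\mathbf{p}''$, up to a correction homotopic to the shortest one, with the parallel-transport image of $\mathbf{p}$ along $\alpha\gamma\alpha^{-1}$; stripping off the contractible $\alpha\hat{\alpha}^{-1}$ conjugation leaves the parallel-transport loop along $\gamma$ based at $p_*$ with its own correction $\hat\eta_{p_*}$ in $\SO(M)|_{p_*}$.

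Second, for the general frame $\mathbf{q}$ at $q\in\gamma$, I transfer the conclusion by base-point change along $\gamma$. The parallel-transport path from the transported frame $\mathbf{p}_*$ along a subarc of $\gamma$ from $p_*$ to $q$, composed with a short geodesic in $\SO(M)|_q$ absorbing the discrepancy with $\mathbf{q}$, conjugates the corrected parallel-transport loop at $\mathbf{p}_*$ into the corrected parallel-transport loop at $\mathbf{q}$ (with its correction $\hat\eta_q$ in $\SO(M)|_q$). Since conjugation in $\pi_1(\SO(M))$ preserves free homotopy classes, the two corrected loops of frames are freely homotopic; combined with the first step, this yields the lemma.

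The main obstacle is the area estimate bridging $s^\circ$ and $\gamma$ in the first step: one must show that a geodesic loop at $p$ with $O(\delta)$ turning angle cobounds an $O(\delta)$-area region with the closed geodesic in its free homotopy class, so that the $2$-simplex method of Lemma \ref{estimateReduction} carries over. This is a localized hyperbolic-stability calculation in the spirit of \cite[Lemma 4.8]{Liu-Markovic}, effective whenever $m\delta$ is universally small and $L$ is sufficiently large with respect to $\delta$; the uniqueness assertions for the two shortest paths in the fibers then follow from the same small-radius injectivity bound used in the proof of Lemma \ref{estimateReduction}.
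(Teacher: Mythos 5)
Your proposal is correct and follows essentially the same route as the paper: the paper likewise invokes the small-area annulus between the (cyclic) concatenation and its geodesic reduction $\gamma$ from \cite[Lemma 4.8 (b)]{Liu-Markovic}, notes that a point of $\gamma$ lies within $10\delta$ of $p$, and then argues as in Lemma \ref{estimateReduction}, with the passage to an arbitrary frame $\mathbf{q}$ handled by the same conjugation/base-point-change principle you describe. The only minor imprecision is that the discrepancy between the transported frame and an arbitrary $\mathbf{q}$ in the fiber need not be small, so the absorbing path in $\SO(M)|_q$ is just some path (with the fiberwise frame change handled by the right $\SO(3)$-action, which is homotopic to the identity); this does not affect the argument.
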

		
		\begin{proof}
			Note that under the assumptions,
			the annulus between the cyclic concatenation of the cycle and its reduction
			can be chosen to have small area, and $q\in\gamma$ can be chosen $10\delta$--close to $p$.
			See \cite[Lemma 4.8 (b)]{Liu-Markovic} for the geometry.
			Then the lemma can be proved in a way similar to Lemma \ref{estimateReduction}.
		\end{proof}

		\begin{lemma}\label{estimateCommutator}
			Let $a,b$ be a pair of geodesic paths in $M$ with all the endpoints the same point $p$.
			Suppose that there exists a unit vector $\vec{t}$ at $\vec{p}$ such that
			$\vec{t}$ is $\delta$--close to the initial direction of $a$ and the terminal direction of $b$,
			and that $-\vec{t}$ is $\delta$--close to the terminal direction of $a$ and the initial direction of $b$.
			Moreover, suppose that there exists a unit vector $\vec{n}\perp\vec{t}$ at $p$
			of which the parallel transport along $a$ and along $b$ are both $\delta$--close to $\vec{n}$.
			Let $\mathbf{p}$ be a frame in $\SO(M)$ at $p$.
			Denote by $\mathbf{q}$ the parallel transport of $\mathbf{p}$
			along the concatenated path $ab\bar{a}\bar{b}$, where the bar notation stands for the orientation reversal.
			
			Then for universally small $\delta$, 
			there exists a unique shortest path in $\SO(M)|_p$ from $\mathbf{q}$ to $\mathbf{p}$. 
			Moreover, the concatenation of 
			the parallel-transport path from $\mathbf{p}$ to $\mathbf{q}$ and the shortest path
			in $\SO(M)|_p$ from $\mathbf{q}$ to $\mathbf{p}$ is null homologous in $\SO(M)$.
		\end{lemma}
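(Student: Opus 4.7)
The plan is to exhibit $\hat\gamma$ as homotopic in $\pi_1(\SO(M),\mathbf{p})$ to a commutator of two loops; once this is done, null-homology follows by abelianization. Preliminarily, the hypotheses force both $SO(3)$--holonomies $A,B$ of parallel transport along $a,b$ to be approximately the rotation $r_\pi$ of angle $\pi$ about $\vec{n}$ (they reverse $\vec{t}$ and fix $\vec{n}$). Hence the holonomy $B^{-1}A^{-1}BA$ of $ab\bar{a}\bar{b}$ is uniformly close to the identity in $SO(3)$ for small $\delta$, so the unique short fiber path from $\mathbf{q}$ to $\mathbf{p}$ exists. Call this short path $\sigma$.

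Let $\alpha,\beta$ be the parallel-transport lifts of $a,b$ starting at $\mathbf{p}$. Using the shared axis $\vec{n}$, choose fiber paths $\rho_a,\rho_b$ from $\mathbf{p}\cdot A$ and $\mathbf{p}\cdot B$ back to $\mathbf{p}$, each composed of a small correction to $\mathbf{p}\cdot r_\pi$ followed by the standard arc $s\mapsto \mathbf{p}\cdot r_{\pi(1-s)}$, and set $\hat{a}=\alpha\ast\rho_a$, $\hat{b}=\beta\ast\rho_b$. The $SO(3)$--equivariance of parallel transport yields a basic commutation homotopy: for any parallel-transport lift $\beta'$ of a geodesic segment from $\mathbf{p}'$ to $\mathbf{q}'$ and any path $\phi\colon[0,1]\to SO(3)$, the concatenations $(\mathbf{p}'\phi(\cdot))\ast(\beta'\cdot\phi(1))$ and $(\beta'\cdot\phi(0))\ast(\mathbf{q}'\phi(\cdot))$ are homotopic rel endpoints via the two-parameter family $F(s,t)=$ the parallel transport of $\mathbf{p}'\phi(s)$ along $\beta'$ up to time $t$. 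Applying this three times to shuffle each of the four fiber segments in $[\hat{a},\hat{b}]=\alpha\ast\rho_a\ast\beta\ast\rho_b\ast\bar\rho_a\ast\bar\alpha\ast\bar\rho_b\ast\bar\beta$ past the successive parallel-transport segments (which become $SO(3)$--translates) transforms $[\hat{a},\hat{b}]$ rel basepoint into $\alpha\ast\alpha_2\ast\alpha_3\ast\alpha_4\ast\tau$, where $\alpha\ast\alpha_2\ast\alpha_3\ast\alpha_4$ is exactly the parallel-transport lift of $ab\bar{a}\bar{b}$ and $\tau$ is a residual fiber path from $\mathbf{q}$ to $\mathbf{p}$ built from translates of $\rho_a,\rho_b$ and their reverses by products of $A^{\pm1},B^{\pm1}$. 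Hence $[\hat{a},\hat{b}]=\hat\gamma\cdot(\sigma^{-1}\ast\tau)$ in $\pi_1(\SO(M),\mathbf{p})$, and it remains to verify that the fiber loop $\sigma^{-1}\ast\tau$ is null in $\pi_1(\SO(M)|_p)=\pi_1(SO(3))=\Integral/2$.

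The main obstacle is this spin-loop calculation, which I would carry out in the double cover $SU(2)\cong S^3$. The spin lifts $\tilde{A},\tilde{B}$ are close to a common lift $\tilde{r}$ of $r_\pi$ satisfying $\tilde{r}^2=-I$, so the starting point of the lifted $\tau$, namely the spin commutator $\tilde{B}^{-1}\tilde{A}^{-1}\tilde{B}\tilde{A}$, is close to $+I$. Each of the four arcs of $\tau$ has the shape of an approximate left-translate of the canonical arc $s\mapsto\tilde{r}_{\pi(1-s)}$ by a prefactor that reduces by the identities $\tilde{r}^{-1}\tilde{r}=I$ and $\tilde{r}^2=-I$; a direct computation then tracks the spin lift through the non-identity lift of $r_\pi$ and through $-I$ before returning to $+I$, producing a closed loop in $S^3$. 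Therefore $\tau$ is null in $\pi_1(SO(3))$, as is the short path $\sigma$ trivially, whence $\hat\gamma=[\hat{a},\hat{b}]$ in $\pi_1(\SO(M),\mathbf{p})$ and $\hat\gamma$ lies in the commutator subgroup, hence is null-homologous as claimed. The hypothesis that $a$ and $b$ share the same invariant normal $\vec{n}$ is precisely what closes the spin loop at $+I$: had the axes differed by a nontrivial rotation, the lifted loop would terminate at $-I$ and contribute $\hat{c}$, consistent with the intuition explained in the introduction.
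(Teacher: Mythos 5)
Your argument is correct, and it takes a genuinely different route from the paper's proof. The paper works at the level of singular $1$-chains: it introduces the frames $\mathbf{p}_i$ obtained by successive parallel transport, the rotated frames $\mathbf{p}_i^\dagger = \mathbf{p}_i \cdot R_N(\pi)$, and the $\pi$-long fiber arcs $\hat{\xi}_i$ from $\mathbf{p}_i$ to $\mathbf{p}_i^\dagger$; it then uses the rectangle $2$-chains swept out by parallel-transporting these arcs along the segments $a,b,\bar a,\bar b$ to reduce the claimed cycle, modulo $1$-boundaries of $\SO(M)$, to a linear combination of short fiber segments clustered near $\mathbf{p}$ and near $\mathbf{p}^\dagger$, which is then visibly a boundary. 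Your proof uses the same geometric input (the $\SO(3)$-equivariance of parallel transport, giving the squares $F(s,t)$), but deploys it in $\pi_1$ rather than in $H_1$: it rewrites the claimed cycle as a commutator $[\hat{a},\hat{b}]$ of two explicit loops in $\SO(M)$ multiplied by a residual fiber loop $\sigma^{-1}\ast\tau$, and then verifies this fiber loop is null by a lift to $SU(2)$. What your approach buys is a cleaner conceptual picture (a commutator plus an explicit spin-lift check) that makes transparent exactly where the potential obstruction $\hat{c}$ would arise, while the paper's chain-level reduction avoids the $SU(2)$ detour but requires careful bookkeeping of which short segments cancel.

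One caveat on your closing remark: your computation of the lift of $\tau$ actually ends at $+I$ regardless of whether $a$ and $b$ share the invariant normal, provided the lifts $\tilde\phi_a,\tilde\phi_b$ are chosen consistently; what the shared-axis hypothesis controls is the starting point $\tilde{B}^{-1}\tilde{A}^{-1}\tilde{B}\tilde{A}$, which approximates $(\tilde{r}_b\tilde{r}_a)^2$ when $a,b$ preserve possibly distinct normals $\vec{n}_a,\vec{n}_b$. If $\vec{n}_a$ and $\vec{n}_b$ differ by an angle $\theta$, this equals $\exp(2\imunit\theta\,\vec{m}\cdot\vec{\sigma})$ for the appropriate axis $\vec{m}$, which is close to $+I$ for small $\theta$ and only reaches $-I$ at $\theta = \pi/2$. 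So the heuristic ``had the axes differed by a nontrivial rotation, the lifted loop would terminate at $-I$'' is imprecise for small misalignments; it is precisely this continuous dependence on the axis angle that the paper exploits in Proposition \ref{substituteFrameSpinningPath} to engineer small corrections. This does not affect the validity of your proof of the lemma itself, where the hypothesis pins $\theta \leq O(\delta)$ and the short fiber path exists; the imprecision is only in the concluding commentary.
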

		
		\begin{proof}
			Observe that it suffices to prove the lemma for a specific frame $\mathbf{p}$ in $\SO(M)|_p$.
			In fact, suppose that we have found a	(cellular) $2$-chain 
			 $C=k_1\sigma_1+\cdots+k_r\sigma_r$ bounded by the claimed cycle for $\mathbf{p}$,
			where $\sigma_i\colon D^2\to \SO(M)$ are $2$-cells.
			Since any other frame $\mathbf{p}'$ in $\SO(M)|_p$
			can be written as $\mathbf{p}\cdot g$ where $g\in\SO(3)$
			is a constant matrix, the $2$-chain $C\cdot g=k_1\sigma_1\cdot g+\cdots+k_r\sigma_r\cdot g$
			is bounded by the claimed cycle for $\mathbf{p}'$.
			This is because the right action of $\SO(3)$ on the principal bundle $\SO(M)$
			preserves the metric and commutes with parallel transport.
			
			In the rest of the proof, we argue for the frame $\mathbf{p}=(\vec{t},\vec{n},\vec{t}\times\vec{n})$
			in $\SO(M)|_p$. Denote by $\mathbf{p}^\dagger\in\SO(M)$ the frame $(-\vec{t},\vec{n},-\vec{t}\times\vec{n})$.
			
			For universally small $\delta$, we may assume
			that any ball of radius $1000\delta$ in $\SO(M)|_p$ is embedded. 
			By the assumption, the parallel transport of
			$\mathbf{p}$ consequentially along $a$, $b$, $\bar{a}$, $\bar{b}$ gives rise to
			four frames $\mathbf{p}_1,\cdots,\mathbf{p}_4\in\SO(M)|_p$ such that
			$\mathbf{p}_1$ and $\mathbf{p}_3$ lie in the $(100\delta)$--neighborhood of $\mathbf{p}^\dagger$,
			while $\mathbf{p}_2$ and $\mathbf{p}_4$ lie in the $(100\delta)$--neighborhood of $\mathbf{p}$.
			Note that $\mathbf{p}_4=\mathbf{q}$, and
			it is already clear that $\mathbf{q}$ can be connected by a unique shortest path in $\SO(M)|_p$.
						
			It remains to show that the claimed cycle is a boundary in $\SO(M)$. Writing $\mathbf{p}_0=\mathbf{p}$, 
			the claimed cycle as the sum of the parallel-transport paths 
			$[\mathbf{p}_i,\mathbf{p}_{i+1}]$ for $i=0,1,2,3$ and the shortest path $[\mathbf{p}_4,\mathbf{p}_0]$ in $\SO(M)|_p$.
			We argue by homologically simplifying the claimed cycle until it is obviously null homologous in $\SO(M)$.
			To this end, denote by
				$$\mathbf{p}\cdot R_N(\theta)\in\SO(M)|_p$$
			the rotation of $\mathbf{p}$ about $\vec{n}$ by an angle $\theta$,
			which is given by the right action of the matrix
			$$R_N(\theta)\,=\,\left[
			\begin{array}{ccc} 
			\cos\theta&0&\sin\theta\\
			0&1&0\\
			-\sin\theta&0&\cos\theta
			\end{array}
			\right]\in\SO(3).$$
			Note that $\mathbf{p}\cdot R_N(0)=\mathbf{p}$ and $\mathbf{p}\cdot R_N(\pi)=\mathbf{p}^\dagger$.
			Denote by $\mathbf{p}_i^\dagger=\mathbf{p}_i\cdot R_N(\pi)$,
			and by $\hat{\xi}_i$ the path from $\mathbf{p}_i$ to $\mathbf{p}_i^\dagger$,
			parametrized as $\mathbf{p}\cdot R_N(\theta)$ for $\theta\in[0,\pi]$.
			We have approximately
			$$\mathbf{p}_0,\mathbf{p}_1^\dagger,\mathbf{p}_2,\mathbf{p}_3^\dagger,\mathbf{p}_4\approx\mathbf{p}$$
			with error at most $100\delta$ from $\mathbf{p}$,
			and similarly,
			$$\mathbf{p}_0^\dagger,\mathbf{p}_1,\mathbf{p}_2^\dagger,\mathbf{p}_3,\mathbf{p}_4^\dagger\approx\mathbf{p}^\dagger.$$
			By our assumption on the smallness of $\delta$,
			those frames near $\mathbf{p}$, or $\mathbf{p}^\dagger$ respectively,
			can be mutually connected by unique shortest paths in $\SO(M)|_p$. 
			
			Consider the paths $[\mathbf{p}_0,\mathbf{p}_1]$ and $[\mathbf{p}_2,\mathbf{p}_3]$.
			Because there is a rectangle parametrized as a family of parallel-transport paths
			$[\mathbf{p}_2,\mathbf{p}_3]\cdot R_N(\theta)$ along $\bar{a}$ where $\theta\in[0,\pi]$,
			the path $[\mathbf{p}_2,\mathbf{p}_3]$ is homologous to the ($1$-chain) 
			sum of $\hat{\xi}_2$ and $[\mathbf{p}_2,\mathbf{p}_3]\cdot R_N(\pi)$ and $-\hat{\xi}_3$.
			The middle term $[\mathbf{p}_2,\mathbf{p}_3]\cdot R_N(\pi)$ equals the parallel-transport
			path from $\mathbf{p}_2^\dagger$ along $\bar{a}$, where $\mathbf{p}_2^\dagger$
			is approximately $\mathbf{p}^\dagger$. Since $[\mathbf{p}_0,\mathbf{p}_1]$ is the parallel-transport
			path from $\mathbf{p}_0$ along $a$, where $\mathbf{p}_0$ is approximately $\mathbf{p}$,
			the path $[\mathbf{p}_2,\mathbf{p}_3]\cdot R_N(\pi)$ is almost the orientation reversal
			of $[\mathbf{p}_0,\mathbf{p}_1]$. More precisely, their sum is homologous to the sum of the shortest paths 
			$[\mathbf{p}_0,\mathbf{p}_2^\dagger]$ and 
			$[\mathbf{p}_3^\dagger,\mathbf{p}_1]$ near $\mathbf{p}$ and $\mathbf{p}^\dagger$, respectively.
			This yields
				$$[\mathbf{p}_0,\mathbf{p}_1]+[\mathbf{p}_2,\mathbf{p}_3]
				=\hat\xi_2-\hat\xi_3+[\mathbf{p}_0,\mathbf{p}_2^\dagger]+[\mathbf{p}_3^\dagger,\mathbf{p}_1],$$
			as $1$-chains of $\SO(M)$ modulo $1$-boundaries.
			Similarly,
				$$[\mathbf{p}_1,\mathbf{p}_2]+[\mathbf{p}_3,\mathbf{p}_4]
				=\hat\xi_3-\hat\xi_4+[\mathbf{p}_1,\mathbf{p}_3^\dagger]+[\mathbf{p}_4^\dagger,\mathbf{p}_2],$$
			modulo $1$-boundaries.
			Since $\mathbf{p}_2$ and $\mathbf{p}_4$ are near to each other,
				$$\hat\xi_2-\hat\xi_4=-[\mathbf{p}_2^\dagger,\mathbf{p}_4^\dagger]-[\mathbf{p}_4,\mathbf{p}_2],$$
			modulo $1$-boundaries.
			Therefore, the claimed cycle
				$$[\mathbf{p}_0,\mathbf{p}_1]+[\mathbf{p}_1,\mathbf{p}_2]+[\mathbf{p}_2,\mathbf{p}_3]+[\mathbf{p}_3,\mathbf{p}_4]+[\mathbf{p}_4,\mathbf{p}_0]$$
			is equal to a linear combination of shortest paths in $\SO(M)|_p$ 
			that are near $\mathbf{p}$ or $\mathbf{p}^\dagger$,
			modulo $1$-boundaries of $\SO(M)$. 
			It follows that the claimed cycle is null homologous in $\SO(M)$.			
		\end{proof}
		
		\subsection{Substitution for paths of spinning frames}
		
		Given a parameter $\phi\in\Real$, and for any frame 
		$\mathbf{p}=(\vec{t}_p, \vec{n}_p, \vec{t}_p\times\vec{n}_p)$ in $\SO(M)$ at a point $p$ in $M$,
		we denote by
		$$\mathbf{p}(\phi)\,=\,(\vec{t}_p,\vec{n}_p(\phi),\vec{t}_p\times\vec{n}_p(\phi))\in\SO(M)|_p$$
		the rotation of $\mathbf{p}$ about $\vec{t}_p$ by an angle $\phi$.
		In other words,
		$$\mathbf{p}(\phi)\,=\,\mathbf{p}\cdot R_T(\phi),$$
		where
		$R_T(\phi)\in\SO(3)$ stands for the matrix
		$$R_T(\phi)\,=\,\left[
			\begin{array}{ccc} 
			1&0&0\\
			0&\cos\phi&-\sin\phi\\
			0&\sin\phi&\cos\phi
			\end{array}
			\right].$$
		This gives rise to a 1-parameter family of spinning frames
			$$\mathbf{p}\cdot R_T\colon \Real\to\SO(M)|_p.$$

		\begin{proposition}\label{substituteFrameSpinningPath}
			Let $\mathbf{p}=(\vec{t}_p, \vec{n}_p, \vec{t}_p\times\vec{n}_p)$ in $\SO(M)$
			be a frame at a point $p$ in $M$. Let $\phi\in\Real$ be a constant
			and denote by $\hat\omega$ the path of spinning frames 
			from $\mathbf{p}$ to $\mathbf{p}(\phi)$ which is parametrized by the interval $[0,\phi]$. 
			
			Given any positive constant $\delta$, and for every sufficiently large $L$ 
			with respect to $M$ and $\delta$, there exists a geodesic path $s$ in $M$ with both endpoints
			$p$ which satisfies the following requirements:
			\begin{itemize}
				\item The length of $s$ is $(\delta/L)$--close to $L$. The initial direction and the terminal direction
				of $s$ are both $(\delta/L)$--close 
				to $\vec{t}_p$.
				\item The parallel transport from $p$ back to $p$ along $s$ takes $\mathbf{p}$ to a frame
				$\mathbf{q}'$ which is $(\delta/L)$--close to $\mathbf{p}(\phi)$, and there exists a unique shortest
				path in $\SO(M)|_p$ between $\mathbf{q}'$ and $\mathbf{p}(\phi)$. 
				\item Denote by $\hat{s}$ the path which is the concatenation the parallel-transport
				path from $\mathbf{p}$ to $\mathbf{q}'$
				with the shortest path from $\mathbf{q}'$ to $\mathbf{p}(\phi)$.
				The relative homology class represented by $\hat{s}\in\pi_1(\SO(M),\mathbf{p},\mathbf{p}(\phi))$ equals 
				$[\hat{\omega}]\in H_1(\SO(M),\mathbf{p}\cup\mathbf{p}(\phi);\,\Integral)$.
			\end{itemize}					
		\end{proposition}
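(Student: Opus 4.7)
The plan is to implement the iterated commutator construction sketched in the introduction. Choose a large positive integer $N$ depending on $\phi$, $M$, and $\delta$, and set $\psi=\phi/(4N)$, so that $\psi$ is universally small. Apply Proposition \ref{connectionPrinciple} (with Declaration \ref{finerError}) to produce two geodesic loops $a,b$ at $p$ of length $\approx L/(4N)$ satisfying: the initial and terminal tangent directions of $a$ are close to $\vec{t}_p$ and $-\vec{t}_p$ respectively, those of $b$ are close to $-\vec{t}_p$ and $\vec{t}_p$, parallel transport along $a$ preserves $\vec{n}_p$, and parallel transport along $b$ preserves $\vec{n}_p(\psi)$, with all errors chosen much smaller than $\delta/L$.

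A direct computation in $\SO(3)$ then shows that $P_a$ is approximately the rotation by $\pi$ about $\vec{n}_p$, $P_b$ is approximately the rotation by $\pi$ about $\vec{n}_p(\psi)$, and the commutator parallel transport $P_{ab\bar{a}\bar{b}}=P_b^{-1}P_a^{-1}P_bP_a=(P_bP_a)^2$ is approximately $R_T(4\psi)$. Iterating, the parallel transport along the consecutive chain $a,b,\bar{a},\bar{b},\ldots$ of $m=4N$ members is approximately $R_T(4N\psi)=R_T(\phi)$, taking $\mathbf{p}$ to approximately $\mathbf{p}(\phi)$. Because the junction directions alternate as $\pm\vec{t}_p\to\pm\vec{t}_p$ with matching signs, the chain satisfies the hypothesis of Lemma \ref{estimateReduction}. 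Provided the per-step error times $m$ stays universally small (achievable by tightening the connection-principle tolerance), the lemma produces the reduced concatenation $s$, a geodesic path from $p$ to $p$ whose length, endpoint directions, and parallel-transport frame all meet the stated $(\delta/L)$-precision requirements, with $\mathbf{q}'$ close to $\mathbf{p}(\phi)$ and connectable by a unique short geodesic in $\SO(M)|_p$.

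The remaining task is to verify that the relative homology class of $\hat{s}$ in $\pi_1(\SO(M),\mathbf{p},\mathbf{p}(\phi))$ equals $[\hat{\omega}]$. Split $\hat{s}$ into $N$ pieces, one for each iteration of $\alpha=ab\bar{a}\bar{b}$, joined by short geodesics in the fibres. The $k$th piece runs from (approximately) $\mathbf{p}(4k\psi)$ to $\mathbf{p}(4(k+1)\psi)$. Claim: each such piece is relatively homologous in $\SO(M)$ to the short spinning path parametrized by $[4k\psi,4(k+1)\psi]$. Granting the claim, the concatenation of these $N$ spinning pieces is precisely $\hat\omega$, finishing the proof. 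The claim is proved by adapting the proof of Lemma \ref{estimateCommutator}: that lemma treats the degenerate case $\psi=0$, in which the piece is null homologous; for nonzero $\psi$ one runs the same $R_N$-spinning decomposition of the four-segment lift through the intermediate frames $\mathbf{p}_0,\ldots,\mathbf{p}_4$, and observes that the only homological residue left behind by the mismatch of invariant normals $\vec{n}_p$ and $\vec{n}_p(\psi)$ is exactly the small $R_T$-rotation that parametrizes the spinning path.

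The main obstacle is this last homological computation, since one must show that the residue matches the spinning path on the nose and is not off by a copy of $\hat{c}$; this requires careful bookkeeping of the shortest paths in $\SO(M)|_p$ that appear when modifying the argument of Lemma \ref{estimateCommutator} to accommodate the two distinct rotation axes $\vec{n}_p$ and $\vec{n}_p(\psi)$. A secondary issue is quantitative: the condition $m\delta_0$ universally small in Lemma \ref{estimateReduction} combined with $m=4N$ and $\psi=\phi/(4N)$ forces $N$ to be tuned as a function of $\phi$ and $\delta$, and the connection-principle tolerance of order roughly $\delta/(L\cdot N)$ must be chosen accordingly; this is a routine balancing once the qualitative argument is set up.
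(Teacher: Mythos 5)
Your overall strategy---iterate a small commutator to accumulate the holonomy $R_T(\phi)$, then reduce the chain and compare the resulting frame path with the spinning path $\hat\omega$---is the same as the paper's, but your treatment of the crucial homology step has a genuine gap, and the paper closes it by a simpler route that you overlook.

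You correctly identify the main obstacle yourself: showing that each commutator piece is homologous \emph{on the nose} to the corresponding spinning sub-arc, and not off by the central element $\hat{c}$. Your proposal to ``adapt the proof of Lemma \ref{estimateCommutator}'' to two distinct rotation axes $\vec{n}_p$ and $\vec{n}_p(\psi)$ is not carried out, and in fact this is exactly the hard direction. The paper's resolution is slicker and requires no adaptation at all. Since the hypothesis of Lemma \ref{estimateCommutator} only asks the parallel transport of a single vector $\vec{n}$ along both $a$ and $b$ to lie within a $\delta$-ball, one applies the lemma \emph{as stated} with $\vec{n}=\vec{n}_p$ and a deliberately generous tolerance of order $10|\phi|$: the misalignment of the invariant normals (at most $\phi/4$) is then absorbed into the tolerance, and the lemma immediately gives that the cycle obtained by closing the parallel-transport path $[\mathbf{p},\mathbf{q}']$ with the shortest fiber path \emph{back to $\mathbf{p}$} is null homologous. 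One then expands that short fiber path, relative to endpoints, as the concatenation of the short path $[\mathbf{q}',\mathbf{p}(\phi)]$ with the reversal of $\hat\omega$ (legal because $\phi$ is small and all three frames $\mathbf{q}'$, $\mathbf{p}(\phi)$, $\mathbf{p}$ lie in one convex fiber ball). Rearranging gives $[\hat{s}]=[\hat\omega]$ with no ambiguity by $\hat{c}$ and no ``careful bookkeeping'' beyond the short-path decomposition. You should reorganize your last paragraph around this decomposition rather than around modifying the lemma.

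A secondary but real issue: you choose $N$ depending on $\phi$ and then apply the connection principle to produce segments of length $\approx L/(4N)$, so your \textit{a priori} lower bound on $L$ would depend on $\phi$, whereas the statement only allows dependence on $M$ and $\delta$. The paper avoids this by first observing that $[\hat\omega]\in H_1(\SO(M),\mathbf{p}\cup\mathbf{p}(\phi);\Integral)$ depends on $\phi$ only modulo $4\pi$ (since $\pi_1(\SO(M)|_p)\cong\Integral_2$), normalizing $\phi\in[4\pi,8\pi)$, and then using a \emph{universal} subdivision integer $D$; it also separates the proof into a small-$|\phi|$ local case (a single commutator) and a general case that concatenates $D$ copies of the local output, rather than your single $4N$-member chain. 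Incorporating the $4\pi$-normalization and replacing your $\phi$-dependent $N$ with a universal $D$ would bring your proof in line with the required quantifier order.
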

		
		\subsubsection{The local case} 
			Supposing that $\phi$ is given with $|\phi|$ small enough,
			we first show a local case of Proposition \ref{substituteFrameSpinningPath},
			namely, for any $\delta>0$ at most $|\phi|$, and any for sufficiently large $L>0$ 
			with respect to $M$ and $\delta$,
			the geodesic path $s$ in $M$ can be constructed with the asserted properties 
			of Proposition \ref{substituteFrameSpinningPath}.
						
			To be specific, it suffices to require $|\phi|$ to be so small that 
			the $(|\phi|\times10^4)$--neighborhood of $\mathbf{p}$ is embedded in $\SO(M)|_p$.
			Then the path of spinning frames $\mathbf{p}\cdot R_T\colon [0,10\phi]\to\SO(M)|_p$ 
			is the unique shortest path between $\mathbf{p}$ and $\mathbf{p}(10\phi)$
			in $\SO(M)|_p$. Then for any $0<\delta<|\phi|$, we require $L>1$ to be so large
			that the connection principle can be applied with respect to $\delta\times10^{-3}$.
			
			The construction of the local case is as follows.
			By the connection principle, we construct geodesic paths $a$ and $b$ 
			in $M$ with all endpoints $p$ of the following shape (or pose):
			\begin{itemize}
				\item The length of $a$ is $(\delta/100L)$--close to $L/4$. The same
				holds for the length of $b$.
				\item The initial direction and the terminal direction of $a$ are $(\delta/100L)$--close to
				$\vec{t}_p$ and $-\vec{t}_p$ respectively. Oppositely, 
				the initial direction and the terminal direction of $b$ are $(\delta/100L)$--close to
				$-\vec{t}_p$ and $\vec{t}_p$ respectively.
				\item The parallel transport along $a$ takes $\vec{n}_p$ back
				to $\vec{n}_p$ up to an error at most $\delta/100L$ in angle. 
				However, the parallel transport along $b$ takes $\vec{n}_p$ 
				to $\vec{n}_p(\phi/2)$ with error at most $\delta/100L$.				
			\end{itemize}
			The asserted path $s$ can be taken as the reduced concatenation of the consecutive chain
				$$a,b,\bar{a},\bar{b},$$
			where the bar notation stands for orientation reversal. In other words,
			$s$ is the unique geodesic path in $M$ which is homotopic to the concatenated path
			relative to the endpoints. 
			We check that the constructed path
			$s$ satisfies the requirements of Proposition \ref{substituteFrameSpinningPath}
			as follows.
			
			The first requirement about the length and directions at endpoints of $s$ follows
			from the length estimate of \cite[Lemma 4.8 (1)]{Liu-Markovic}.
			
			The second requirement about the parallel transport of $\mathbf{p}$ along $s$ 
			can be checked by considering the effect on the basis vectors. 
			The parallel transport of $\vec{t}_p$
			along $s$ ends up $(\delta/10L)$--close to $\vec{t}_p$ by the direction estimates
			of the first requirement with slightly more careful control of the error.
			The parallel transport of $\vec{n}_p$ consequentially along
			$a$, $b$, $\bar{a}$, $\bar{b}$ results in four vectors which are
			$(\delta/10L)$--close to $\vec{n}_p$, $\vec{n}_p(\phi/2)$, $\vec{n}_p(-\phi/2)$, $\vec{n}_p(\phi)$
			respectively. This is because the effect of parallel transport of vectors $\vec{u}\perp\vec{t}$
			along $a$ or $\bar{a}$ is approximately	the reflection about the axis $\Real\vec{n}_p$
			in the orthogonal complement of $\vec{t}_p$, while along $b$ or $\bar{b}$
			the axis is $\Real\vec{n}_p(\phi/4)$.
			Then the parallel transport of $\vec{n}_p$ along the reduced concatenation
			$s$ can be estimated by the phase estimate of \cite[Lemma 4.8 (2)]{Liu-Markovic}, 
			(choosing auxiliary framing of the segments $a$, $b$, $\bar{a}$, $\bar{b}$ 
			approximately the four vectors above). Combining the estimates for 
			the parallel transport of $\vec{t}_p$ and $\vec{n}_p$ yields the desired estimation
			for the distance between $\mathbf{q}'$ and $\mathbf{p}(\phi)$.			
			
			The third requirement on the homology class of $\hat{s}$
			is satisfied because of Lemmas \ref{estimateReduction} and \ref{estimateCommutator}.
			In fact, applying those lemmas with $\delta$ there taken to be $|10\phi|$, it follows that
			parallel transport along $s$ takes $\mathbf{p}$ to a frame $\mathbf{q}'\in\SO(M)|_p$
			which can be connected to $\mathbf{p}(\phi)$ along a unique shortest path $[\mathbf{q}',\mathbf{p}]$
			in $\SO(M)|_p$.
			Moreover, the parallel-transport path $[\mathbf{p},\mathbf{q}']$ concatenated with 
			the short path $[\mathbf{q}',\mathbf{p}]$ represents a $1$-cycle which is
			null homologous in $\SO(M)$. Since $\phi$ is small as chosen, the short path $[\mathbf{q}',\mathbf{p}]$ is 
			homotopic, relative to endpoints,
			to the concatenation of the shortest path $[\mathbf{q}',\mathbf{p}(\phi)]$ in $\SO(M)|_p$ 
			with the reversal of the frame-spinning path $\hat{\omega}=[\mathbf{p},\mathbf{p}(\phi)]$.
			
			Therefore, the path $s$ constructed above meets all the requirements of Proposition \ref{substituteFrameSpinningPath},
			as claimed for the local case.		
		
		\subsubsection{The general case}
			We proceed to prove the general case of Proposition \ref{substituteFrameSpinningPath}.
			Let $\mathbf{p}=(\vec{t}_p, \vec{n}_p, \vec{t}_p\times\vec{n}_p)$ in $\SO(M)$
			be a frame at a point $p$ in $M$, and let $\phi\in\Real$ be an arbitrarily given
			constant.
			
			Note that replacing $\phi$ with $\phi+4k\pi$ for any integer $k$ does not change the homology
			class of the frame-spinning path $[\hat\omega]\in H_1(\SO(M),\mathbf{p}\cup\mathbf{p}(\phi);\Integral)$,
			because $\pi_1(\SO(M)|_p)\cong\Integral_2$.
			Hence we may assume that $\phi\in[4\pi,8\pi)$, 
			and for some universally large integer $D>0$,
			say $10^5$, we may assume that the value
				$$\phi_D=\phi/D>0$$
			is as small as applicable to the local case. In particular,
			any ball of radius $1000\phi_D$ in $\SO(M)|_p$ is embedded.
			Given any $\delta>0$, we may assume that $1000\delta<4\pi/D$,
			possibly after replacing it with a smaller value.
			This ensures that any ball of radius $100\delta$ in $\SO(M)|_p$ is
			convex and isometrically embedded.
			Let $L>1$ be so large that $L_D=L/D$ 
			works for the local case with respect to $\delta_D=\delta/D^2$.
			
			Under the setting above,
			we construct the asserted path $s$ as follows.
			By applying the local case, we obtain a geodesic path $s_D$, 
			so that all the requirements of Proposition \ref{substituteFrameSpinningPath} 
			are satisfied by $s_D$ with respect to 
			the describing parameters $(\phi_D,\delta_D,L_D)$.
			We take the asserted geodesic path $s$ in $M$
			to be the reduced concatenation of the consecutive chain of paths in $M$:
				$$a_1,\cdots,a_D,$$
			which consists of $D$ copies $a_i$ of $s_D$.
			
			The first two requirements of Proposition \ref{substituteFrameSpinningPath} 
			can be shown to be satisfied by $s$ using a similar argument as the local case,
			which applies the estimates of \cite[Lemma 4.8]{Liu-Markovic}. 
			It remains to check that 
			the almost parallel-transport path
			$\hat{s}\in\pi_1(\SO(M),\mathbf{p},\mathbf{p}(\phi))$
			in the third requirement of Proposition \ref{substituteFrameSpinningPath}
			is homologous to 
			the frame-spinning path $\hat\omega$ in $H_1(\SO(M),\mathbf{p}\cup\mathbf{p}(\phi);\Integral)$.
			
			To this end, write $\hat\omega$ as the concatenation of consecutive subpaths $\hat\omega_1,\cdots,\hat\omega_D$,
			where $\hat\omega_i$ is the frame-spinning path parametrized by the subinterval $[(i-1)\phi_D,i\phi_D]$
			of $[0,\phi]$.
			The terminal endpoint of $\hat\omega_i$ is the frame $\mathbf{p}_i=\mathbf{p}(i\phi_D)$.
			Parallel transport of $\mathbf{p}$ consequentially
			along $a_1,\cdots,a_D$ gives rise to a sequence of points $\mathbf{q}_1,\cdots,\mathbf{q}_D$.
			We also set $\mathbf{q}_0=\mathbf{p}$.
			Since each $a_i$ is a copy of $s_D$, it is easy to estimate
			that parallel-transport along $a_i$ takes $\mathbf{p}_{i-1}$ to a frame $\mathbf{p}'_i$ in $\SO(M)|_p$,
			which	lies in the $(100\delta_D)$--neighborhood of $\mathbf{p}_i$;
			and hence, it can be estimated that $\mathbf{q}_i$ lies in the $(100\delta/D)$--neighborhood of $\mathbf{p}_i$.
			By our assumption, the balls $B_i$ of radius $100\delta$ centered at $\mathbf{p}_i$ 
			are mutually disjointly embedded in $\SO(M)|_p$, and any pair of frames in each $B_i$
			can be connected by a unique shortest path in $B_i$. The fact that $a_i$ is a copy of $s_D$
			implies that, as a $1$-chain of $\SO(M)$,
			the parallel-transport path $[\mathbf{q}_{i-1},\mathbf{q}_i]$
			equals $\hat\omega_i$ plus some $1$-chains in $B_{i-1}$ and $B_i$,
			modulo $1$-boundaries of $\SO(M)$.
			It follows that the consequential concatenation of $[\mathbf{q}_{i-1},\mathbf{q}_i]$, where $i=0,\cdots,D-1$,
			is equal to $\hat\omega$ plus $1$-chains in $B_i$ the modulo $1$-boundaries of $\SO(M)$.
			By Lemma \ref{estimateReduction}, the consequential concatenation of the $D$ paths 
			$[\mathbf{q}_{i-1},\mathbf{q}_i]$
			equals the parallel-transport path $[\mathbf{p},\mathbf{q}']$ of $\mathbf{p}$ along $s$,
			up to $1$-chains of $B_D$ and $1$-boundaries of $\SO(M)$.
			It follows that the almost parallel-transport path
			$\hat{s}$ in the third requirement of Proposition \ref{substituteFrameSpinningPath}
			differs from $\hat\omega$ only by $1$-boundaries of $\SO(M)$ and $1$-cycles of $B_i$, which are again
			$1$-boundaries of $\SO(M)$. 
			This verifies the third requirement of Proposition \ref{substituteFrameSpinningPath}.
			
			Therefore, we have completed the proof of Proposition \ref{substituteFrameSpinningPath}
			in the general case.			
		
	\subsection{Connecting frames in a grading class}
		In this section, we prove Theorem \ref{gradedConnectionPrinciple}.
		Suppose that
		$\mathbf{p}=(\vec{t}_p,\vec{n}_p,\vec{t}_p\times\vec{n}_p)$ and 
		$\mathbf{q}=(\vec{t}_q,\vec{n}_q,\vec{t}_q\times\vec{n}_q)$
		in $\SO(M)$ be frames given	at points $p$ and $q$ of $M$, respectively.
		For any given relative homology class $\Xi\in H_1(\SO(M),\mathbf{p}\cup\mathbf{q};\,\Integral)$,
		the goal is to construct a geodesic segment $s$ in $M$ from $p$ to $q$
		so that the parallel-transportation path of $\mathbf{p}$ along $s$ almost
		represents $\Xi$. Furthermore, for any given constant $\delta>0$,
		we want $s$ to have length approximately $L$, 
		and transport $\mathbf{p}$ approximately to $\mathbf{q}$,
		with error bounded by $\delta/L$,
		and we want the construction to be possible for any sufficiently large $L>0$.
		
		The recipe consists of three steps: First, we construct a geodesic segment $w$
		from $p$ to $q$ which realizes the projection of $\Xi$ in $H_1(M,p\cup q;\Integral)$.
		Then we adjust $w$ by concatenating it nearly smoothly with null homologous 
		closed geodesic paths $a,b$ based at $p,q$ respectively, so that
		the reduced concatenation $awb$ have approximately the wanted directions
		at the endpoints. Finally, we adjust the $awb$ by further concatenating it with
		a frame-spinning path $z$ based at $q$, in a nearly smooth fashion, 
		so that the parallel-transport path induced by the reduced concatenation $awbz$ 
		almost represents $\Xi$.
		We can appropriately choose the length of the paths $a,w,b,z$ so that their reduced concatenation
		$awbz$ yields the desired segment $s$.
		
		To be precise, suppose that a constant $\delta>0$ is given. Possibly after replacing it
		with a smaller value (depending the injectivity radius of $M$),
		we may assume in addition that any ball of radius $1000\delta$ is 
		embedded in $\SO(M)$. 
		At this point, we simply assume that $L>1$ is 
		any arbitrary constant
		which is sufficiently large 
		to enable all the participating constructions and estimates
		in our recipe.
		A specific lower bound for $L$ is summarized at the end of our recipe.
		
		The recipe for constructing the asserted $s$ is as follows.
		
		\emph{Step 1}.		
		We take $w$ to be a geodesic path in $M$ from $p$ to $q$ which realizes
		the projection of $\Xi$ in $H_1(M,p\cup q;\Integral)$. Here the projection is induced by 
		the projection map $\SO(M)\to M$. Since all but finitely many such $w$ are long, 
		for any constant $K_0=K_0(\delta)>1$, to be specified later, we may assume that
		the length of $w$ is at least $K_0$. 
		On the other hand, there exists a constant 
			$$K_1=K_1(K_0,M,\Xi)>K_0,$$
		so that the length of $w$ is at most $K_1$,
		such as the minimal length of paths in the projection of $\Xi$
		of length at least $K_0+1$. Denote the length of $w$ by
			$$K_0<\ell(w)<K_1.$$
		
		\emph{Step 2}.	
		We take $a$ to be a geodesic path in $M$ from $p$ to $p$, and $b$
		a geodesic path in $M$ from $q$ to $q$, so that the following properties hold.
		\begin{itemize}
			\item The length $\ell(a)$ of $a$ is $(\delta/100L)$--close to $L/4-\ell(w)/2$.
			The initial direction of $a$ is $(\delta/100L)$--close to $\vec{t}_p$, and the terminal direction of
			$a$ is $(\delta/100L)$--close to the initial direction of $w$. Moreover, $a$ is null homologous in $M$.
			\item The length $\ell(b)$ of $b$ is $(\delta/100L)$--close to $L/4-\ell(w)/2$.
			The initial direction of $b$ is $(\delta/100L)$--close to the terminal direction of $w$, and the terminal direction of
			$b$ is $(\delta/100L)$--close to $\vec{t}_q$. Moreover, $b$ is null homologous in $M$.
		\end{itemize}
		The geodesic path $a$ can be taken to be the reduced concatenation
		of a commutator of two suitably chosen geodesic paths at $p$,
		and similarly can we construct $b$. 
		An explicit construction can be found in \cite[Lemma 3.4]{Sun-virtualDomination}. For example, 
		to invoke the lemma to
		create a geodesic path $a$, one may take the vectors $\vec{v}_1$ and $\vec{v}_2$ there
		to be $\vec{t}_p$ and the initial direction of $w$, and the vector $\vec{n}$ 
		there to be any unit vector	perpendicular to both $\vec{v}_1$ and $\vec{v}_2$.
		Let 
		$$L_1=L_1(\delta,K_1,M)>10K_1$$
		be an \textit{a priori} lower bound for $L$ to enable the construction
		for $a$ and $b$. Note that $\ell(a)$ and $\ell(b)$ are hence at least $K_0$.
				
		\emph{Step 3}. We take $z$ to be a geodesic path in $M$ from $q$ to $q$ as follows.
		Recall that the notation $\mathbf{q}(\phi)$ stands for the rotation of $\mathbf{q}$
		about $\vec{t}_q$ by an angle $\phi$. By our construction,
		the parallel transport of $\vec{n}_p$ consequentially along $a,w,b$ is
		a unit vector $\vec{m}$ at $q$ which is nearly orthogonal to $\vec{t}_q$.
		Pick a value $\phi\in\Real$ such that $\vec{n}_q(-\phi)$ minimizes the distance
		to $\vec{m}$ among unit vectors orthogonal to $\vec{t}_q$.
		We apply Proposition \ref{substituteFrameSpinningPath} to construct 
		two candidates of $z$, denoted as $z_\uparrow$ and $z_\downarrow$.
		One of them satisfies the following properties, and the other
		satisfies the same properties but with $\phi$ replaced by $\phi+2\pi$.
		\begin{itemize}
				\item The length of $z$ is $(\delta/100L)$--close to $L/2$. The initial direction and the terminal direction
				of $z$ are both $(\delta/100L)$--close 
				to $\vec{t}_q$.
				\item The parallel transport from $q$ back to $q$ along $z$ takes $\mathbf{q}(-\phi)$ to a frame
				$\mathbf{q}'$ which is $(\delta/100L)$--close to $\mathbf{q}$, and there exists a unique shortest
				path in $\SO(M)|_p$ between $\mathbf{q}'$ and $\mathbf{q}$. 
				\item Denote by $\hat{z}$ the path which is the concatenation the parallel-transport
				path from $\mathbf{q}(-\phi)$ to $\mathbf{q}'$
				with the shortest path from $\mathbf{q}'$ to $\mathbf{q}$.
				The relative homology class represented by $\hat{z}\in\pi_1(\SO(M),\mathbf{q}(-\phi),\mathbf{q})$ equals 
				$[\hat{\omega}]\in H_1(\SO(M),\mathbf{q}(-\phi)\cup\mathbf{q};\,\Integral)$.
		\end{itemize}					
		Note that $\mathbf{q}(-\phi)=\mathbf{q}(-\phi-2\pi)$ but the corresponding $[\hat{\omega}]$
		are different for the two candidates, so the relative homology classes 
			$$[\hat{z}_\uparrow],\,[\hat{z}_\downarrow]\,\in\, H_1(\SO(M),\mathbf{q}(-\phi)\cup\mathbf{q};\,\Integral)$$
		differ exactly by the canonical element $[\hat{c}]\in H_1(\SO(M);\Integral)$ of order $2$, 
		namely, the nontrivial element of $H_1(\SO(M)|_q;\Integral)\cong\Integral_2$.
		Let
			$$L_2=L_2(\delta,K_0,M)>K_0$$
		be a lower bound for $L$ so that the application of Proposition \ref{substituteFrameSpinningPath} is valid.
		
		The geodesic path $w,a,b,z_\uparrow,z_\downarrow$ that we have constructed
		are all longer than the presumed constant $K_0>1$. 
		According to \cite[Lemma 4.9]{Liu-Markovic} and Lemma \ref{estimateReduction}
		we can choose 
			$K_0=K_0(\delta)>1$ 
		to be sufficiently large,
		then both of the geodesic paths $s_\uparrow=awbz_\uparrow$ and $s_\downarrow=awbz_\downarrow$,
		which are obtained by reduced concatenation,
		satisfy the first two asserted properties of Theorem \ref{gradedConnectionPrinciple}.
		Regarding to the third property,
		$s_\uparrow$ and $s_\downarrow$ give rise to two paths of frames $\hat{s}_\uparrow$ and $\hat{s}_\downarrow$
		from $\mathbf{p}$ to $\mathbf{q}$, and exactly one of them represents $\Xi\in H_1(\SO(M),\mathbf{p}\cup\mathbf{q};\Integral)$
		while the other represents $\Xi+[\hat{c}]$. 
		
		To finish our recipe, we take the correct $z$ and the corresponding reduced concatenation
			$$s=awbz$$
		so that $\hat{s}$	represents $\Xi$. 
		Then the geodesic path $s$ in $M$ from $p$ to $q$ meets all the asserted properties
		of Theorem \ref{gradedConnectionPrinciple}.
		
		To summarize, suppose that $\mathbf{p}$, $\mathbf{q}$ in $\SO(M)$ are given.
		For any constant $\delta>0$, we choose constants $K_0$, $K_1$, $L_1$, and $L_2$ in order,
		and take
			$$L_3=L_3(\delta,M,\Xi)\,=\max(L_1,L_2).$$
		Then for any constant $L>L_3$, the asserted geodesic path $s$ in $M$ from $p$ to $q$
		of Theorem \ref{gradedConnectionPrinciple} can be construction following the recipe above.
		
		This completes the proof of Theorem \ref{gradedConnectionPrinciple}.
			
	\subsection{Proof of Theorem \ref{constructGradedGoodCurves}}
		We derive Theorem \ref{constructGradedGoodCurves} 
		from the relative version Theorem \ref{gradedConnectionPrinciple}
		as follows.
		
		Given any homology class $\Xi\in H_1(\SO(M);\Integral)$ and any constant $\epsilon>0$, 
		we take an arbitrary frame $\mathbf{p}=(\vec{t}_p,\vec{n}_p,\vec{t}_p\times\vec{n}_p)
		\in\SO(M)|_p$ at a point $p\in M$.
		Identify $H_1(\SO(M);\Integral)$ naturally with $H_1(\SO(M),\mathbf{p};\Integral)$.
		Denote by 
		$$\hat{c}\,\in\,\pi_1(\SO(M))$$	
		the nontrivial central element, which is unique of order $2$.
		For sufficiently large $R$ with respect to $\epsilon$, $M$, and $\Xi+[\hat{c}]$,
		we apply Theorem \ref{gradedConnectionPrinciple} to construct
		a geodesic path $s$ in $M$ from $p$ to $p$ with the following properties:
		\begin{itemize}
		\item The length of $s$ is $(\epsilon/10R)$--close to $R$. The initial direction and 
		the terminal directions are both $(\epsilon/10R)$--close to $\vec{t}_p$.
		\item The parallel transport of $\mathbf{p}$ along $s$ is a frame $\mathbf{p}'\in\SO(M)|_p$ which is
		$(\epsilon/10R)$--close to $\mathbf{p}$.
		\item The asserted concatenated path $\hat{s}$ represents 
		$\Xi+[\hat{c}]$ in $H_1(\SO(M),\mathbf{p};\Integral)$.
		\end{itemize}
		Denote by $\gamma$ the unique free geodesic loop which is freely homotopic to the closed path $s$.
		
		It follows from Lemma \ref{estimateCyclicReduction} that $\gamma$ is an $(R,\epsilon/R)$--curve. 
		Moreover, for any frame $\mathbf{q}$ at a point $q\in\gamma$, 
		the cyclic concatenation  of the parallel-transport path 
		$[\mathbf{q},\mathbf{q}']$ of $\mathbf{q}$ around $\gamma$
		with the shortest path $[\mathbf{q}',\mathbf{q}]$ in $\SO(M)|_q$
		is freely homotopic to the closed path $\hat{s}$ in $\SO(M)$, 
		but its free homotopy class
		differs from the canonical lift $\hat{\gamma}\in\pi_1(\SO(M))$
		by a factor $\hat{c}$,
		(see Section \ref{Sec-preliminaries}).
		Therefore, $[\hat\gamma]$ equals $\Xi$ in $H_1(\SO(M);\Integral)$, as desired.
		
		This completes the proof of Theorem \ref{constructGradedGoodCurves}.

\section{Subsurface of odd Euler characteristic}\label{Sec-oddEulerCharacteristic}
	In this section, we construct immersed quasi-Fuchsian subsurfaces
	of odd Euler characteristic in closed hyperbolic $3$-manifolds, 
	proving Theorem \ref{main-oddEulerCharacteristic}.
	
	Let $M$ be a closed hyperbolic $3$-manifold. 
	Without loss of generality,
	we assume that $M$ is orientable and fix an orientation, 
	otherwise we pass to an orientable double cover.
	Denote by $\SO(M)$ the special orthonormal frame bundle over $M$.
	
	Take an arbitrary frame in $\SO(M)$ at a point $p$ in $M$, denote as
		$$\mathbf{p}=(\vec{t}_p,\vec{n}_p,\vec{t}_p\times\vec{n}_p).$$	
	Denote by $\mathbf{p}^*\in\SO(M)|_p$ the frame
		$$\mathbf{p}^*=(\vec{t}_p,-\vec{n}_p,-\vec{t}_p\times\vec{n}_p).$$
	For a sufficently small constant $\epsilon>0$ and some sufficiently large constant $R>1$,
	which we specify in the summary at the end of the proof,
	we apply Theorem \ref{gradedConnectionPrinciple} 
	to construct a geodesic path $s$ in $M$ with both endpoints $p$
	satisfying the following requirements:
	\begin{itemize}
	\item The length of $s$ is $(\epsilon/100R)$--close to $R/2$.
	The initial direction and the terminal direction of $s$ are both $(\epsilon/100R)$--close to $\vec{t}_p$.
	\item The parallel transport of $\mathbf{p}$ along $s$ back to $p$ is $(\epsilon/100R)$--close to $\mathbf{p}^*$.
	\item The closed path $s$ in null homologous in $H_1(M;\Integral)$.
	\end{itemize}
	Note that there are two qualified candidates 
	for the relative homology class $\Xi=[\hat{s}]$
	in $H_1(\SO(M),\mathbf{p}\cup\mathbf{p}^*;\,\Integral)$ as of Theorem \ref{gradedConnectionPrinciple},
	differing from each other 
	by the homology class of the central nontrivial element $\hat{c}\in\pi_1(\SO(M))$.
	Either of them works fine.
	For sufficently small $\epsilon$ and large $R$, the closed geodesic loop of $M$ 
	which is freely homotopic to the cyclic concatenation of two copies of $s$ is a good curve:
		$$\gamma\in\ocurves_{R,\epsilon}(M).$$
	The closed geodesic loop of $M$ freely homotopic $s$ itself, denoted as
		$$\sqrt{\gamma}\in\pi_1(M),$$
	is not good. It is doubly covered by $\gamma$, and has complex length 
	approximately $R/2+\pi\cdot\sqrt{-1}$, with error at most $\epsilon/2R$
	in absolute value.
	
	For sufficiently small $\epsilon$ and large $R$, we observe the following fact:
	
	\begin{lemma}\label{gammaBounds}
				The canonical lift $\hat\gamma\in\pi_1(\SO(M))$ is null homologous in $\SO(M)$.
	\end{lemma}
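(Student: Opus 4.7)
The plan is to compute $[\hat\gamma]\in H_1(\SO(M);\Integral)$ explicitly by exploiting two features of the construction: the doubling relation that $\gamma$ is freely homotopic to $s\cdot s$, and the identity $\mathbf{p}^*=\mathbf{p}\cdot R_T(\pi)$, which lets us use the right $\SO(3)$-action on $\SO(M)$ to produce a useful $2$-chain linking the two halves of the loop.

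First I will set up the parallel-transport path $\alpha$ of $\mathbf{p}$ along $s$, which ends $(\epsilon/100R)$-close to $\mathbf{p}^*$. For the homology computation I plan to treat $\alpha$ as ending exactly at $\mathbf{p}^*$ by concatenating with a uniquely defined short path in $\SO(M)|_p$; this correction is null homologous because it lives in a contractible ball and will be justified at the end via Lemma \ref{estimateCyclicReduction}. Because parallel transport is $\SO(3)$-equivariant on the right, the parallel-transport path of $\mathbf{p}^*=\mathbf{p}\cdot R_T(\pi)$ along the second copy of $s$ will be $\alpha\cdot R_T(\pi)$, which ends at $\mathbf{p}$. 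Thus the parallel-transport loop around $\gamma$ will be represented by the $1$-cycle $\alpha+(\alpha\cdot R_T(\pi))$ in $\SO(M)$.

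Next, I will introduce the fiber paths $\rho(\theta)=\mathbf{p}\cdot R_T(\pi\theta)$ from $\mathbf{p}$ to $\mathbf{p}^*$ and $\rho'(\theta)=\mathbf{p}^*\cdot R_T(\pi\theta)$ from $\mathbf{p}^*$ to $\mathbf{p}$, whose concatenation represents $[\hat c]$ in $H_1(\SO(M);\Integral)$. The $2$-chain $H(t,\theta)=\alpha(t)\cdot R_T(\pi\theta)$ has boundary $\alpha+\rho'-(\alpha\cdot R_T(\pi))-\rho$, which will yield the identity $A:=[\alpha-\rho]=[(\alpha\cdot R_T(\pi))-\rho']$ in $H_1(\SO(M);\Integral)$. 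Telescoping the doubled loop as $\alpha+(\alpha\cdot R_T(\pi))=(\alpha-\rho)+(\rho+\rho')+((\alpha\cdot R_T(\pi))-\rho')$ will give $[\alpha+(\alpha\cdot R_T(\pi))]=2A+[\hat c]$. Adding the extra $[\hat c]$ factor from the definition of the canonical lift (Section \ref{Sec-preliminaries}) will then yield $[\hat\gamma]=2A+2[\hat c]=2A$, using that $[\hat c]$ has order at most $2$.

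It remains to show $2A=0$. The class $A=[\alpha-\rho]$ projects under $\SO(M)\to M$ to $[s]$, which vanishes by the third requirement on $s$; hence $A$ lies in the image of the fiber inclusion $H_1(\SO(3);\Integral)=\Integral_2\to H_1(\SO(M);\Integral)$, so $2A=0$ and therefore $[\hat\gamma]=0$. The main technical obstacle I anticipate is the approximation error: $\alpha$ does not literally end at $\mathbf{p}^*$, and the actual parallel transport of the true endpoint frame along the second $s$ only approximately equals $\alpha\cdot R_T(\pi)$. I plan to absorb these discrepancies into short paths within small convex balls in fibers of $\SO(M)$, which contribute trivially to homology once $\epsilon$ is small enough and $R$ large enough to invoke Lemmas \ref{estimateReduction} and \ref{estimateCyclicReduction}; note also that the conclusion is independent of which of the two allowed values of $\Xi$ was chosen, consistent with the author's remark that either works.
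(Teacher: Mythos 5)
Your proof is correct and follows essentially the same route as the paper's: both arguments exploit the doubling $\gamma\simeq s\cdot s$, the right action of $R_T(\pi\theta)$ to produce the $2$-chain relating the two halves of the transport loop, the extra $[\hat c]$ from the definition of the canonical lift, and the null-homology of $s$ in $M$ together with the order-$2$ fiber class to kill the remainder. The only difference is bookkeeping — the paper replaces $\hat s$ by a fiber path $\hat\alpha$ up front and reads off $[\hat\gamma]=2[\hat c]=0$, whereas you isolate the class $A=[\alpha-\rho]$ and kill $2A$ at the end — but these are the same argument in equivalent packaging.
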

	
	\begin{proof}
		Since the element $s\in\pi_1(M,p)$ is homologically trivial,
		there exists a path of frames $\hat\alpha\in\pi_1(\SO(M),\mathbf{p},\mathbf{p}^*)$ 
		which is contained in $\SO(M)|_p$, such that the path $\hat{s}$ from $\mathbf{p}$
		to $\mathbf{p}^*$ as in the conclusion of Theorem \ref{gradedConnectionPrinciple}
		is relatively homologous to $\hat\alpha$. Denote by $R_T(\pi)\in\SO(3)$ the matrix
		$$R_T(\pi)\,=\,\left[
			\begin{array}{ccc} 
			1&0&0\\
			0&-1&0\\
			0&0&-1
			\end{array}
			\right].$$
		It follows that the path
			$$\alpha^*=\alpha\cdot R_T(\pi)$$
		from $\mathbf{p}^*$ to $\mathbf{p}$ is relatively homologous to
		$\hat{s}\cdot R_T(\pi)$.
		By Lemma \ref{estimateCyclicReduction} and the definition of the canonical lift, 
		the cyclic concatenation of $\hat{s}$ with $\hat{s}\cdot R_T(\pi)$ differs from $\hat\gamma$
		by a factor $\hat{c}$, as an element of $\pi_1(\SO(M))$.
		On the other hand, the concatenation of $\alpha$ with $\hat\alpha^*$ represents
		$\hat{c}$ in $\pi_1(\SO(M))$ by the topology of $\SO(3)$.
		We conclude that $\hat\gamma$ is homologically $2[\hat{c}]=0$ in $H_1(\SO(M);\Integral)$.				
	\end{proof}
	
	Therefore, it follows from Theorem \ref{solutionRelative} that there exists
	a $\pi_1$-injectively immersed, $(R,\epsilon)$--panted, connected, quasi-Fuchsian subsurface
		$$F\looparrowright M$$
	which is oriented and bounded by $\gamma$.
	In fact, the complex Fenchel--Nielsen coordinates associated to any glued cuff $C$ of $F$ 
	can be required to satisfy $(\chlen(C),s(C))\approx (R/2,1)$ with error
	at most $(\epsilon/R,\epsilon/R^2)$ componentwise,
	the same as in the original construction of Kahn and Markovic 
	(Theorem \ref{surfaceSubgroupTheorem} and Declaration \ref{finerError}).

	We would like to identify every pair of antipodal points of $\gamma$,
	or topologically to glue to the boundary of $F$ a M\"obius band
	with the core $\sqrt{\gamma}$, so as to produce an immersed non-orientable
	subsurface $\Sigma$ as our output. However, to guarantee that $\Sigma$ is $\pi_1$--injective,
	we want to make sure that geometrically $F$ contains no properly embedded essential arcs
	which are relatively short compared to $R$.
	This is the last technical point that we need to address 
	before completing the construction.
	
	To this end, denote by $\mathcal{G}(F)$ the dual graph of the inherited pants decomposition of $F$.
	Namely, the vertices of $\mathcal{G}(F)$ are the $(R,\epsilon)$--pants of $F$, and the edges
	of $\mathcal{G}(F)$ are the glued cuffs,
	which are the $(R,\epsilon)$--curves in the interior of $F$.
	The graph $\mathcal{G}(F)$ is trivalent except at
	the distinguished valence-$2$ vertex $P_0$ which contains $\partial F$.
	
	\begin{lemma}\label{longCombinatorialPath}
		The panted subsurface $F$ can be constructed to satisfy 
		the extra condition	that every non-contractible 
		closed combinatorial path of $\mathcal{G}(F)$
		based at $P_0$ has combinatorial length at least $Re^{R/4}$.
	\end{lemma}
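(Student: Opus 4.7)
The plan is to build $F$ so that its dual pants graph is tree-like in a combinatorial neighborhood of $P_0$ and only closes up into cycles at depth comparable to $Re^{R/4}$. The key geometric input is the exponential abundance of good pants sharing any prescribed cuff: for sufficiently large $R$ the number of $(R,\epsilon/R)$--pants containing a fixed cuff $C\in\ocurves_{R,\epsilon/R}(M)$ is bounded below by $e^{R-o(R)}$, as recorded in the equidistribution estimates underlying \cite[Section 4]{Liu-Markovic} together with Declaration \ref{finerError}.

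Starting from the distinguished pants $P_0$ containing $\gamma$, I would grow a panted subcomplex $T\subset M$ iteratively: at step $k$, for each current frontier cuff $C$, I select an $(R,\epsilon/R)$--pants attached along $C$ whose two remaining cuffs are \emph{fresh}, meaning that they differ from every cuff appearing previously in $T$ and from every cuff already chosen in the current step. At step $k$ the number of forbidden cuffs is at most $O(2^{k+1})$, whereas the pool of good pants sharing $C$ has size at least $e^{R-o(R)}$; a greedy choice therefore succeeds for all $k\le d:=\lceil Re^{R/4}\rceil+1$. Using that the feet of good pants equidistribute on the unit normal bundle of $C$, I would simultaneously arrange the selected pants so that the shearing parameter at each glued cuff is $(\epsilon/R^2)$--close to $1$, preserving the Kahn--Markovic good-assembly condition for $\pi_1$--injectivity and quasi-Fuchsianness of the ultimate surface. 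The outcome of this step is an $(R,\epsilon/R)$--panted subcomplex $T$ whose dual graph is a binary tree of depth $d$ rooted at $P_0$, with frontier multicurve $\gamma_T$ having approximately $2^d$ components.

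Next, I close off $T$ along $\gamma_T$. Because $T$ itself is an $(R,\epsilon/R)$--panted cobordism from $\gamma$ to $\gamma_T$, the classes $[\hat\gamma]$ and $[\hat\gamma_T]$ coincide in $\ocobordism_{R,\epsilon/R}(M)$, so $[\hat\gamma_T]=0$ by Lemma \ref{gammaBounds} and Theorem \ref{goodPantedCobordismGroupLM}. The multicurve version of Theorem \ref{solutionRelative} then yields an oriented quasi-Fuchsian $(R,\epsilon/R)$--panted subsurface $F'\looparrowright M$ with $\partial F'=\gamma_T$, and I set $F=T\cup F'$. By construction, any non-contractible closed combinatorial path of $\mathcal{G}(F)$ based at $P_0$ must leave $T$ through a frontier vertex, traverse $\mathcal{G}(F')$, and re-enter $T$, hence has combinatorial length at least $2d\ge Re^{R/4}$.

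The delicate point, and the step I expect to require the most care, is the greedy construction of $T$: one must simultaneously maintain freshness of cuffs, keep the shearing at each glued cuff within $(\epsilon/R^2)$ of $1$, and guarantee the good-assembly condition throughout. These constraints are essentially compatible because freshness forbids only a set of cardinality $O(2^d)$ among the pool of $e^{R-o(R)}$ candidate pants at each step, while the shearing and foot-framing constraints select a subset of positive equidistributed density. Concretely, one needs the density of $(R,\epsilon/R)$--pants with shearing $(\epsilon/R^2)$--close to $1$ in the pool sharing any fixed cuff to exceed a quantity of order $2^{d}/e^{R-o(R)}$, which holds for all sufficiently large $R$ with respect to $\epsilon$ and $M$. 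The rest is bookkeeping.
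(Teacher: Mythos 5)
The central quantitative claim in your greedy construction is false, and the requirement it is meant to support is impossible to satisfy. You ask every cuff appearing in the tree $T$ to be \emph{fresh}, i.e.\ distinct as a curve of $M$ from all cuffs used before. A binary tree of depth $d=\lceil Re^{R/4}\rceil+1$ has on the order of $2^{d}$ frontier cuffs, a quantity doubly exponential in $R$, whereas the total number of $(R,\epsilon/R)$--curves in $M$ is finite and at most on the order of $e^{2R}$ by the prime geodesic theorem. So there are nowhere near enough distinct good curves, and your comparison ``at most $O(2^{k+1})$ forbidden cuffs versus a pool of size $e^{R-o(R)}$'' already fails once $k$ exceeds roughly a constant times $R$, long before the required depth $Re^{R/4}$. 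One can try to rescue the idea by noting that $\mathcal{G}(F)$ is the dual graph of the \emph{abstract} pants decomposition of $F$, so repeated pants types and repeated cuffs are permitted and freshness is not actually needed to make $\mathcal{G}(T)$ a tree; but then the burden shifts entirely to the step you treat as routine, namely gluing $F'$ to $T$ along the frontier multicurve $\gamma_T$ (whose components now occur with enormous multiplicity) while keeping the shearing parameter of every glued cuff $(\epsilon/R^2)$--close to $1$. The multicurve version of Theorem \ref{solutionRelative} hands you \emph{some} good assembly of pants bounded by $\gamma_T$; it does not let you prescribe the feet of $F'$ along $\gamma_T$ so as to match the single foot pair presented by each frontier pants of $T$, nor does it guarantee that re-running the matching argument on $T\sqcup F'$ preserves the tree structure of $\mathcal{G}(T)$. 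As written, the argument has a genuine gap at both ends: the tree cannot be built as described, and the closing-off step is not justified.

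For comparison, the paper avoids deep trees altogether. It takes any $F_0$ bounded by $\gamma$, takes a closed good panted surface $E$ containing the pants types $P_1,P_2$ adjacent to $P_0$, passes to a finite cover of $E$ induced by a cover of the graph $\mathcal{G}(E)$ so that every embedded cycle of $\mathcal{G}(E)$ has length at least $Re^{R/4}$ and no edge separates, and then performs two cross changes between $F_0$ and two copies of $E$ along the cuffs $C_1,C_2$. Because the swapped-in pants have the same pants type, the good-assembly condition is preserved automatically, and every non-contractible loop at $P_0$ is forced through a copy of the large-girth graph $\mathcal{G}(E)$. The large combinatorial distance is obtained for free from covering space theory of graphs rather than from an astronomically long sequence of geometric selections.
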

	
	\begin{proof}
		Suppose that $F_0$ is an oriented connected $(R,\epsilon)$--panted subsurface
		with connected boundary $\gamma$, as guaranteed by Theorem \ref{solutionRelative}.
		We may require $F_0$ to have complex Fenchel--Nielsen coordinates
		$(\epsilon/R,\epsilon/R^2)$--close to $(R/2,1)$ for every glued cuff.
		See \cite[Corollary 2.7]{Sun-virtualDomination} for an outline of the construction
		based on \cite{Liu-Markovic}.
		
		Denote by $P_0$ the distinguished pair of pants of $F_0$ with one cuff $\partial F$.
		Denote by $C_1,C_2$ the other two cuffs of $P_0$, and $P_1,P_2$ the other two pairs of 
		pants adjacent to $P_0$ along $C_1,C_2$ accordingly.
		Take an oriented connected closed $(R,\epsilon)$--panted surface $E$ with complex Fenchel--Nielsen
		coordinates $(\epsilon/R,\epsilon/R^2)$--close to $(R/2,1)$, (Theorem \ref{surfaceSubgroupTheorem}).
		We may require that (the pants types of) $P_1$ and $P_2$ 
		also appear in the inherited pants decomposition of $E$.
		This follows from \cite[Theorems 2.9 and 2.10]{Liu-Markovic}.
		Possibly after passing to a finite cover of $E$ induced by a regular finite cover of the dual graph $\mathcal{G}(E)$,
		we may assume that every embedded cycle of $\mathcal{G}(E)$ has combinatorial length at least $Re^{R/4}$
		and no edge of $\mathcal{G}(E)$ is separating.
		
		We modify $F_0$ to obtain a new subsurface $F$ with the asserted property.
		Take a copy $E'$ of $E$ such that 
		some pair of pants $P'_1\subset E'$ has the same pants type of $P_1$, 
		with a cuff $C'_1$ corresponding to $C_1$. Denote by $P'\subset E'$ the pants adjacent to $P'_1$
		along $C'_1$. We make a cross change between $F_0$ and $E'$ along the parallel glued cuffs
		$C_1$ and $C'_1$. Namely, cut $F_0$ along $C_1$, and $E'$ along $C'_1$; 
		identify the new unglued cuff of $P'_1$ 
		with the new unglued cuff of $P_0$, 
		and similarly glue the pants $P_1$ and $P'$ along their new unglued cuffs.
		In the same way, we make a cross change between $F_0$ and another copy $E''$ of $E$
		along cuffs corresponding to $C_2$. In effect, we obtain a new $(R,\epsilon)$--panted 
		subsurface $F$ which is oriented and bounded by $\gamma$. The complex Fenchel--Nielsen coordinates
		remain $(\epsilon/R,\epsilon/R^2)$--close to $(R,1)$.
		The pants decomposition graph $\mathcal{G}(F)$ is obtained from $\mathcal{G}(F_0)$ and two copies of $\mathcal{G}(E)$
		by two cross changes of edges corresponding to $C_1$ and $C_2$.
		It is straightforward to see that $\mathcal{G}(F)$ has no non-contractible closed path
		based at $P_0$ of combinatorial length smaller than $Re^{R/4}$.
		Therefore, the panted subsurface $F$ is as desired.	
	\end{proof}
	
	Take $F$ to be a connected oriented $(R,\epsilon)$--panted immersed subsurface of $M$ bounded by 
	the $(R,\epsilon)$--curve $\gamma$,
	such that the complex Fenchel--Nielsen coordinates 
	are approximately $(R/2,1)$ for every glued cuff, with error at most $(\epsilon/R,\epsilon/R^2)$ componentwise.
	Suppose in addition that the pants decomposition graph $\mathcal{G}(F)$
	satisfies the conclusion of Lemma \ref{longCombinatorialPath}.
	Since $\gamma$ doubly cover the closed geodesic $\sqrt{\gamma}$ of $M$, 
	the pre-image of any point of $\sqrt\gamma$ is a pair of antipodal points in $\gamma$.
	We identify every pair of antipodal points of $\partial F$ accordingly.
	
	The result is a connected closed immersed subsurface
		$$\Sigma\looparrowright M,$$
	which is no longer orientable. However, it is $\pi_1$--injectively immersed
	and geometrically finite, hence quasi--Fuchsian, 
	by a geometric criterion due to H.~Sun \cite[Theorem 2.6]{Sun-virtualTorsion}.
	Furthermore, the Euler characteristic of $S$ can be computed by:
		$$\chi(\Sigma)\,=\,\chi(F)\,=1-2\cdot\mathrm{genus}(F),$$
	which is an odd number.
	
	In summary, we can choose some sufficiently small $\epsilon>0$ according to $M$
	and some sufficiently large $R>0$ according to $M$ and $\epsilon$, and construct
	a closed quasi--Fuchsian subsurface $\Sigma\looparrowright M$ of odd Euler characteristic as asserted.
	Fixing a choice of $\epsilon$, it suffices to require that $R$ should be so large  
	that all the constructions and estimates that we have done work.
	
	This completes the proof of Theorem \ref{main-oddEulerCharacteristic}.

\section{Irregular exhausting tower with exponential torsion growth}\label{Sec-irregularExhaustingTower}
	
	In this section, 
	we present the core construction of Theorem \ref{exponentialTorsionGrowth},
	which can be stated in slightly more details as follows.
	
	\begin{proposition}\label{exponentialTorsionGrowthSpecial}
		Let $M$ be an orientable closed hyperbolic $3$-manifold.
		Suppose that there are closed surfaces $S$ and $\Sigma$,
		and there are maps $\iota_S\colon S\to M$, and $\iota_\Sigma\colon \Sigma\to M$,
		and $\rho\colon M\to S$ with the following properties:
		\begin{itemize}
			\item The surface $S$ is orientable, and $\Sigma$ of odd Euler characteristic.
			\item The maps $\iota_S$ and $\iota_\Sigma$ are embeddings with mutually disjoint images.
			\item The composition $\rho\circ\iota_S$ is homotopic to the identity, and
			the composition $\rho\circ\iota_\Sigma$ is homotopic to a constant point map.
		\end{itemize}
		Then for any constant $0<\epsilon<1$ and any base point $*$ of $M$, 
		there exists a tower of finite covers of $M$ with distinguished lifted base points:
		$$\cdots\longrightarrow (\tilde{M}_n,\tilde{o}_n)\longrightarrow\cdots\longrightarrow (\tilde{M}_2,\tilde{o}_2)\longrightarrow 
		(\tilde{M}_1,\tilde{o}_1)\longrightarrow (M,*).$$
		Moreover, the following requirements are satisfied:
		\begin{itemize}
			\item The injectivity radii of $\tilde{M}_n$ at the base points are unbounded.
			\item The number of distinct lifts of $\iota_\Sigma$ into each $\tilde{M}_n$ is at least $(1-\epsilon)\cdot[\tilde{M}_n:M]$.
			Hence, 
			$$\lim_{n\to\infty}\frac{\log|\mathrm{Tor}_1(H_1(\tilde{M}_n;\Integral),\Integral_2)|}{[\tilde{M}_n:M]}\geq (1-\epsilon)\log2.$$
		\end{itemize}
	\end{proposition}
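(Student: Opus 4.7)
The plan is to construct each cover $\tilde M_n$ as corresponding to a finite-index subgroup of $\pi_1(M)$ of the special form $\Gamma_n = K_n \ltimes T_n$ living inside the split decomposition $\pi_1(M) = L \ltimes N$ coming from the retraction. Here $L = (\iota_S)_*\pi_1(S)$ is a retract of $G := \pi_1(M)$ via $(\iota_S)_*$, and $N = \ker \rho_*$ is the normal complement; the hypothesis $\rho \circ \iota_\Sigma \simeq \pt$ places $A := (\iota_\Sigma)_*\pi_1(\Sigma) \subseteq N$. I will take $K_n \leq L$ of index $d_{S,n}$ growing slowly, and $T_n \leq N$ of index $d_{N,n}$ growing much faster, with $T_n$ simultaneously (i) finite index in $N$, (ii) normal in $N$, (iii) stable under $L$-conjugation, (iv) containing $A$, and (v) avoiding all $N$-elements of $M$-geodesic length below a chosen $r_n \to \infty$. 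The degree of $\tilde M_n \to M$ is then $d_n = d_{S,n}\, d_{N,n}$.

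The subgroups $K_n$ are routine: residual finiteness of surface groups yields a cofinal sequence in $L$ avoiding any prescribed finite set of short elements. The delicate step is constructing $T_n$ with all of (i)--(v); this is the main obstacle and is the point at which virtual specialness of $\pi_1(M)$ (Agol--Wise) together with Wise's Malnormal Special Quotient Theorem enters. After replacing $A$ by a finite-index quasi-convex malnormal subgroup, MSQT produces virtually special quotients of $G$ in which the $G$-conjugates of $A$ are killed; intersecting the pullbacks of sufficiently deep characteristic finite-index subgroups with $N$, and further intersecting with $L$-translates to enforce $L$-equivariance, yields the desired $T_n$. The $\epsilon$-slack in the proposition absorbs any $L$-invariance that has to be traded for an $\epsilon$-fraction of cosets.

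Granting $\Gamma_n = K_n \ltimes T_n$, the two geometric conclusions follow quickly. The injectivity radius at the lifted base point $\tilde o_n$ is at least $r_n$, because any loop at $\pt$ of length below $r_n$ corresponds to an element $ln \in G$ with either $l \notin K_n$ or $n \notin T_n$, hence outside $\Gamma_n$. For the number of lifts: for any $g = ln \in G$ a direct semidirect-product computation gives $gAg^{-1} = n\,(\phi(l)A)\,n^{-1}$ inside $N$, and properties (ii)--(iv) then yield $gAg^{-1} \subseteq T_n \subseteq \Gamma_n$, so every coset of $\Gamma_n$ in $G$ produces an embedded lift of $\iota_\Sigma$; since $\iota_\Sigma$ is an embedding, these lifts have pairwise disjoint images. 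This gives at least $(1-\epsilon)d_n$ disjoint embedded copies of $\Sigma$ in $\tilde M_n$.

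For the torsion lower bound, each of the $k_n \geq (1-\epsilon)d_n$ disjoint lifts $\Sigma_i \hookrightarrow \tilde M_n$ is non-orientable of odd Euler characteristic, hence one-sided in the orientable $\tilde M_n$. Loops $\gamma_i$ transverse to $\Sigma_i$ once and disjoint from the other lifts witness that the Poincar\'e duals $\alpha_i \in H^1(\tilde M_n; \Integral_2)$ are $\Integral_2$-linearly independent, so their Bocksteins $\beta(\alpha_i) \in H_1(\tilde M_n; \Integral)_{\mathrm{tors}}$ are $2$-torsion classes whose $\Integral_2$-rank is at least $k_n - b_1(\tilde M_n)$. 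The secondary obstacle is bounding $b_1(\tilde M_n) = o(d_n)$; I expect to arrange this by choosing $d_{S,n}/d_{N,n} \to 0$, so that the retract $\tilde M_n \to \tilde S_{K_n}$ accounts for only an $O(d_{S,n}) = o(d_n)$ contribution to the first Betti number, while the $N/T_n$-direction contributes $o(d_n)$ via the vanishing of the first $\ell^2$-Betti number of closed hyperbolic $3$-manifolds (Agol's virtual RFRS property provides the needed control on non-normal covers of this particular shape). With $b_1(\tilde M_n) = o(d_n)$, the $\Integral_2$-torsion of $H_1(\tilde M_n; \Integral)$ has cardinality at least $2^{(1-\epsilon)d_n - o(d_n)}$, yielding the asserted logarithmic lower bound.
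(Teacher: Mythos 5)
There are two genuine gaps, and the first one is structural. Your requirements (ii)--(iv) make $T_n$ a normal subgroup of $G=\pi_1(M,*)$ containing $A=(\iota_\Sigma)_*\pi_1(\Sigma)$, hence containing the whole normal closure $\langle\langle A\rangle\rangle_G$. But $A$ (based at $*$ via any fixed path) contains a nontrivial element $a_0$ whose translation displacement of a lift of $*$ in $\Hyp$ is at most a constant $C_0$ depending only on $M$, $\Sigma$ and $*$; since $a_0\in T_n\subseteq\Gamma_n$ for every $n$, the injectivity radius of $\tilde M_n$ at $\tilde o_n$ never exceeds $C_0/2$, contradicting the first bullet of the proposition. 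In other words, ``every coset of $\Gamma_n$ carries a lift of $\Sigma$'' and ``the injectivity radius at the base point is unbounded'' cannot both hold: the $\epsilon$ must be spent precisely on sacrificing the conjugates of $A$ near the base point, not on relaxing $L$-invariance (and note that if $T_n$ is not normalized by $K_n$, then $K_nT_n$ is not even a subgroup, so that trade is not available). Resolving exactly this tension is what the paper's construction is engineered for: it cuts $M$ along the embedded non-separating $S$, passes to a cyclic tower $M'_n$ of degree $d^{s_n}$ organized by the generalized digital expansion, and assembles $\tilde M_n$ from \emph{different} covers over different blocks of pieces --- a deep cover killing $g_n$ over the small base block (giving large injectivity radius at $\tilde o_n$), and the pull-back of a characteristic cover $\tilde S_n\to S$ via $\rho$ over the $(1-\epsilon)$-fraction of level-$\infty$ blocks, which is where the full lifts of $\Sigma$ live. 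MSQT enters not to kill conjugates of $A$ but through the omnipotence lemma (Lemma \ref{omnipotence}), which makes the boundary covers of these heterogeneous pieces isomorphic and characteristic over $S$ so that they can be glued. Your homogeneous ansatz $\Gamma_n=K_n\ltimes T_n$ has no mechanism for treating the base-point region differently from the rest; it also leans on finding finite-index, $L$-equivariant subgroups of $N=\ker\rho_*$, an infinitely generated group, which is far from ``routine''.

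The second gap is the estimate $b_1(\tilde M_n)=o(d_n)$. L\"uck approximation and its variants apply to normal exhausting towers or to Benjamini--Schramm convergent (Farber) sequences; the covers needed here are irregular, and the paper explicitly remarks that its tower is not even BS-convergent, so the appeal to $\ell^2$-Betti numbers and ``virtual RFRS'' does not deliver the bound. The paper avoids the issue entirely with Lemma \ref{oddChiAndTor}: $m$ disjointly embedded closed subsurfaces of odd Euler characteristic in an orientable closed $3$-manifold force a $\Integral_2^{\oplus m}$ submodule of $H_1(\,\cdot\,;\Integral)$, using the relations $[\Sigma_i]^3\neq 0$ and $[\Sigma_i][\Sigma_j]=0$ in $H^*(\,\cdot\,;\Integral_2)$, with no Betti-number correction. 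You should replace the ``independent Bocksteins minus $b_1$'' count with an argument of that kind.
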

	
	To illustrate the strategy, let us briefly explain the first move.
	That is, given a nontrivial element $g_1\in\pi(M,*)$, 
	how to construct a finite cover $(\tilde{M}_1,\tilde{o}_1)$ into which $g_1$ does not lift but many $\Sigma$ lifts.
	We can first take a cyclic cover $(M'_1,o'_1)$ dual to $S$ of some large degree $d$.
	Writing $V$ for the complement of $S$ in $M$, we can decompose $M'_1$ into $d$ lifted copies 
	$V_i$ of $V$, where $i\in\Integral_d$.
	If $g_1$ intersects $S$ algebraically nontrivially, we can simply take $d$ to be very large
	and $\tilde{M}_1$ to be $M'_1$.
	Otherwise, $g_1$ lifts into $M'_1$ based at $o'_1$, and it is contained
	in the union $W$ of some consecutive pieces $V_i$ nearby. 
	As the number of pieces in $W$ depends only on $g_1$, 
	$W$ occupies a very small portion of $M'_1$ if we choose $d$ to be large.
	To construct $\tilde{M}_1$, the idea is to assemble a finite cover of $W$ and 
	some finite covers of other pieces
	$V_j$. We want to require that $g_1$ does not lift to the finite cover of $W$, and meanwhile,
	that every preimage component of $\Sigma$ in the finite cover of any other piece $V_j$ is a lift of $\Sigma$.
	The latter can be
	ensured if the finite cover of $V_j$ comes from a finite cover of $S$, via the inclusion of $V$ into $M$
	and the retraction of $M$ onto $S$. The former can be ensured by the residual finiteness of $W$.
	However, in order to glue these individual covering pieces together, 
	it is crucial to know that their restriction to the boundary are isomorphic	covers of $S$,
	better characteristic. This is guaranteed by the so-called omnipotence lemma (Lemma \ref{omnipotence}),
	a consequence of Wise's Malnormal Special Quotient Theorem.
	Assume that all these are done, then we will obtain a desired $\tilde{M}_1$
	in which $g_1$ disappears but most lifts of $\Sigma$ survive.
	
	The construction can be iterated to 
	destroy more elements from $\pi_1(M,*)$, one at each time.
	If the degrees of cyclic covers are chosen to grow very fast,
	the total portion of non-lifts (or `damaged lifts') of $\Sigma$ will remain small.
	To argue by induction in a formal way, 
	we introduce some notion called \emph{generalized digital expansion} to encode a cyclic covering tower
	$\{M'_n\}$ underlying the asserted $\{\tilde{M}_n\}$.
	A precise induction hypothesis can be found as the statement of Lemma \ref{constructionAssertedTower}.

	The rest of this section is devoted to the proof of Proposition \ref{exponentialTorsionGrowthSpecial}.
	
	\subsection{Cyclic towers encoded by generalized digital expansions}
	In this subsection, we introduce
	a tower of finite cyclic covers which is to be considered
	as an intermediate step toward 
	the construction of the exhausting tower asserted by Proposition \ref{exponentialTorsionGrowthSpecial}.
	
	For this subsection, we suppose that 
	$(M,*)$ is an orientable closed $3$-manifold,
	and $S$ is an embedded non-separating oriented connected closed subsurface of $M$
	which misses $*$.
	Denote by
		$$V\,=\,M\setminus\mathrm{Nhd}^\circ(S)$$
	the compact submanifold of $M$ obtained by cutting along $S$.
	Let $d$ be an odd positive integer which is at least $3$.

	\subsubsection{Generalized digital expansion and blocks}
	For any positive integer $n$, denote by $[d^n]$ the set of 
	the $d^n$ consecutive integers centered at $0$, namely,
		$$[d^n]\,=\,\left\{0,\pm1,\pm2,\cdots,\pm\frac{d^n-1}2\right\}.$$
	Adopt the notation 
		$$s_n=1+2+\cdots+n=\frac{n(n+1)}2.$$
	There is a canonical bijective correspondence between sets:
	\begin{eqnarray*}
		[d^{s_n}]&\longleftrightarrow&[d]\times[d^2]\times\cdots\times [d^n]\\
		a&\leftrightarrow&(a_0,\,a_1,\,\cdots,\,a_{n-1})
	\end{eqnarray*}
	which is determined by the relation
		$$a\,=\,\sum_{i=0}^{n-1} a_j d^{s_j}.$$
	We view the correspondence as a generalized digital expansion
	for the integers $a$ of $[d^{s_n}]$:
	At the $j$-th place with the assigned weight $d^{s_j}$, the digit $a_j$
	is taken from the digit set $[d^{j+1}]$, which is particular for that place.
	
	We say that two integers $a,a'\in[d^{s_n}]$
	are in \emph{the same block}, denoted as
		$$a\sim a',$$
	if the generalized digital expansions of $a$ and $a'$
	agree from the $(n-1)$--th place all the way down to 
	the highest place with a digit $0$.
	In other words,
	if $a_j\neq0$ for all $j=0,\cdots,n-1$, 
	then $a\sim a'$ means $a_j=a'_j$ for all $j=0,\cdots,n-1$;
	if $a_k=0$ for some $k\in\{0,\cdots,n-1\}$
	and $a_j\neq0$ for all $j=k+1,\cdots,n-1$, 
	then $a\sim a'$ means $a_j=a'_j$ for all $j=k,\cdots,n-1$.
	Since being in the same block is an equivalence relation on $[d^{s_n}]$,
	we call the equivalence classes the \emph{blocks} of $[d^{s_n}]$.
	Denote the set of blocks of $[d^{s_n}]$ as
		$$\mathscr{B}_n=\mathscr{B}_n(d)=[d^{s_n}]/\sim.$$
	For any block $\beta\in\mathscr{B}_n$, the \emph{level} of $\beta$
	is said to be $k\in\{0,\cdots,n-1\}$,
	if for some (hence any) integer $a\in\beta$, 
	$a_k$ is the highest $0$ in the expansion of $a$; 
	the level of $\beta$ is formally defined to be $\infty$, 
	if the block $\beta$ consists of a single integer which has no $0$ digits.
	
	For example, the subset $[d^{s_{n-1}}]$ of $[d^{s_n}]$ 
	is the only block of level $(n-1)$. 
	Its size is $1/d^n$ of $[d^{s_n}]$.
	Any of the remaining blocks of $[d^{s_n}]$, say of a level $j$
	other than $\infty$, looks like a shifted subset $[d^{s_j}]$
	centered at some integer with an expansion $(0,\cdots,0,a_{j+1},\cdots,a_{n-1})$,
	where $a_{j+1},\cdots,a_{n-1}$ are nonzero.
	For large $n$, the blocks of level $\infty$ in $[d^{s_n}]$ 
	reminds us of the picture of Cantor's dust set, 
	but the dust is really heavy:
		
	\begin{lemma}\label{heavyDust}
		Given any constant $0<\epsilon<1$, the following statement
		holds true for all sufficiently large odd positive integers $d$:
		For all $n\in\Natural$, the number of blocks of level $\infty$ in $[d^{s_n}]$
		is greater than $(1-\epsilon)\cdot d^{s_n}$.	
	\end{lemma}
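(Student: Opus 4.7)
The plan is to observe that a block of level $\infty$ is, by definition, a single integer $a\in[d^{s_n}]$ whose generalized digital expansion $(a_0,a_1,\ldots,a_{n-1})$ has no zero digit. Consequently the number of level-$\infty$ blocks equals the number of such zero-free expansions, and counting these reduces the lemma to a standard product estimate.

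First I would count directly: the digit $a_j$ is drawn from $[d^{j+1}]$, so there are exactly $d^{j+1}-1$ admissible nonzero choices at position $j$. Since the choices at different positions are independent, the total number of zero-free expansions, i.e.\ of level-$\infty$ blocks in $[d^{s_n}]$, equals
$$N_n(d)\,=\,\prod_{j=0}^{n-1}\bigl(d^{j+1}-1\bigr)\,=\,\prod_{j=1}^{n}\bigl(d^{j}-1\bigr).$$
Dividing through by $d^{s_n}=\prod_{j=1}^n d^j$ gives the ratio
$$\frac{N_n(d)}{d^{s_n}}\,=\,\prod_{j=1}^{n}\bigl(1-d^{-j}\bigr).$$

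Next I would apply the elementary inequality $\prod_{j=1}^{n}(1-x_j)\ge 1-\sum_{j=1}^{n}x_j$, valid whenever $0\le x_j\le 1$, to the nonnegative summands $x_j=d^{-j}$. This yields
$$\prod_{j=1}^{n}\bigl(1-d^{-j}\bigr)\,\ge\,1-\sum_{j=1}^{n}d^{-j}\,\ge\,1-\sum_{j=1}^{\infty}d^{-j}\,=\,1-\frac{1}{d-1},$$
a lower bound that is uniform in $n$. Given $0<\epsilon<1$, it now suffices to choose $d$ large enough (and odd, as required by the standing hypothesis) so that $1/(d-1)<\epsilon$; any odd $d>1+\epsilon^{-1}$ works. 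For such $d$ and all $n\in\Natural$ one obtains $N_n(d)>(1-\epsilon)\cdot d^{s_n}$, completing the argument.

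I do not anticipate a serious obstacle: the content is combinatorial bookkeeping together with the standard $(1-\sum x_j)$ lower bound on $\prod(1-x_j)$. The only mildly subtle point is to recognize that level-$\infty$ blocks are singletons, so that counting blocks coincides with counting zero-free expansions; after that, the uniformity in $n$ follows immediately because the tail $\sum_{j>n} d^{-j}$ is harmless in the direction of the estimate.
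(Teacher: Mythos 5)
Your proof is correct and follows essentially the same route as the paper: both identify the level-$\infty$ blocks with zero-free expansions, count them as $\prod_{j=1}^{n}(d^{j}-1)$, and bound the ratio $\prod_{j=1}^{n}(1-d^{-j})$ from below uniformly in $n$. The only difference is in the last step, where the paper appeals to the Euler function $\phi(1/d)=\prod_{j\ge1}(1-d^{-j})\to1$ as $d\to\infty$, whereas you use the elementary Weierstrass inequality $\prod(1-x_j)\ge1-\sum x_j$ to get the explicit (and slightly cleaner) threshold $d>1+\epsilon^{-1}$.
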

	
	\begin{proof}
		The number $C_n$ of the blocks of level $\infty$ in $[d^{s_n}]$
		is clearly	$(d-1)\times\cdots\times(d^n-1)$.
		For an odd positive integer $d$ at least $3$, 
		the portion of such blocks $(C_n\,/\,d^{s_n})$ is strictly decreasing as $n$ grows.
		The limit can be expressed as $\phi(1/d)$ using the Euler function:
			\begin{eqnarray*}
			\phi(q)&=&\prod_{j=1}^\infty(1-q^n)\,=\,\sum_{k=-\infty}^{\infty}(-1)^kq^{(3k^2-k)/2}\\
			&=&1-q^2-q^3+q^5+q^7-\cdots
			\end{eqnarray*}
		When $d$ is sufficiently large, we have $\phi(1/d)>1-\epsilon$,
		so $C_n$ is at least $(1-\epsilon)\cdot d^{s_n}$.
	\end{proof}
	
	\subsubsection{Encoding a tower of finite cyclic covers} 
	Continue to adopt the notations of the generalized digital expansion
	with respect to $d$. Denote by
		$$\cdots\longrightarrow M'_n\longrightarrow \cdots\longrightarrow M'_2\longrightarrow M'_1\longrightarrow M$$
	the tower of finite cyclic covers dual to $S$ with the covering degree
		$$[M'_n:M]\,=\,d^{s_n},$$
	so $M'_n$ covers $M'_{n-1}$ cyclically of degree $d^n$.		
	
	Since $V$ is the compact submanifold obtained by cutting $M$ along $S$,
	the boundary $\partial V$ has two components $\partial_\pm V$ parallel to $S$ inside $M$
	such that the (outward) induced orientation of $\partial_\pm V$ coincides with $\mp S$.
	The cyclic cover $M'_n$ can be constructed by gluing $d^{s_n}$ copies $V_n(a)$ of $V$ in such a way
	that $\partial_+ V_n(a)$ is identified with $\partial_- V_n(a+1)$ for all $a\in[d^{s_n}]$.
	By convention, $V_n(\frac{d^{s_n}-1}2+1)$ stands for $V_n(-\frac{d^{s_n}-1}2)$.
	Moreover, each block $\beta\in\mathscr{B}_n$ corresponds to a \emph{block of pieces} $W_n(\beta)$ of $M'_n$
	obtained by gluing the \emph{unit pieces} $V_n(a)$ for all $a\in\beta$.
	
	Therefore, we have the following decompositions of $M'_n$ by various lifts of $S$:
		$$M'_n\,=\,\bigcup_{a\in[d^{s_n}]}\,V_n(a)\,=\,\bigcup_{\beta\in\mathscr{B}_n}\,W_n(\beta).$$
	Equip each $M'_n$ with the lifted base point 
	$o'_n$
	which lies in the unit piece $V_n(0)$.
	Therefore, 
	$o'_n$ is contained in the unique block of pieces,
	$$o'_n\in W_n(\beta_0)\subset M'_n,$$
	where $\beta_0\in\mathscr{B}_n$ is the unique block of level $(n-1)$;
	for the blocks $\beta\in\mathscr{B}_n$ of level $\infty$,
	the corresponding $W_n(\beta)$ are all isomorphic to $V$.
	
	\subsubsection{Boundary-characteristic finite covers for blocks of pieces}
	In literature, \cite{DLW,PW-embedded} for example,
	boundary-characteristic finite covers 
	have been considered for JSJ pieces of irreducible
	closed $3$-manifolds to construct interesting finite covers.
	We consider a very similar situation
	where $V$ and $S$ play the roles
	of a JSJ piece and a JSJ torus accordingly.
	Continue to consider the tower of finite cyclic covers of $M$ encoded by
	the generalized digital expansion with respect to $d$.
	
	For any block of pieces $W_n(\beta)$, we say that a (possibly disconnected) 
	finite cover $\tilde{\mathcal{W}}$ of $W_n(\beta)$ is \emph{$\tilde{S}$--boundary-characteristic},
	if $\tilde{S}$ is a characteristic finite cover of $S$
	and 
	if every boundary component of $\tilde{\mathcal{W}}$ is isomorphic to $\tilde{S}$,
	as a cover of $S$ given by the composition $\tilde{\mathcal{W}}\to W_n(\beta)\to M$.
	Recall that 
	a characteristic cover $\tilde{X}$ of a path-connected space $X$ is a covering space of $X$
	which corresponds to a characteristic subgroup of $\pi_1(X)$, namely, 
	a subgroup invariant under action of the automorphism group $\mathrm{Aut}(\pi_1(X))$.
	If $\tilde{X}$ is a characteristic cover of some characteristic cover of $X$, it
	is also characteristic over $X$.
	

	The following omnipotence lemma is a consequence of 
	the Malnormal Special Quotient Theorem due to D.~T.~Wise
	\cite[Theorem 12.3]{Wise-book}, (see \cite{AGM-alternateMSQT} for an alternate proof).
	This lemma allows us to produce boundary-characteristic finite covers as deep as we wish.
	The assumption can certainly be replaced
	by a weaker one that $M$ is hyperbolic and $V$ has incompressible acylindrical boundary.

	\begin{lemma}\label{omnipotence}
		If $M$ is hyperbolic and $S$ is a retract of $M$,
		then for every finite cover $\mathcal{W}''$ of a block of pieces $W_n(\beta)$,
		there exists a characteristic finite cover $\tilde{S}^*$ of $S$ such that the following statement holds true:
		
		For every characteristic finite cover $\tilde{S}$ of $\tilde{S}^*$, 
		there exists 
		a regular finite cover $\tilde{\mathcal{W}}$ of $W_n(\beta)$
		which is $\tilde{S}$--boundary-characteristic.
		Moreover, the covering projection of $\tilde{\mathcal{W}}$ to $W_n(\beta)$
		factors through $\mathcal{W}''$.
	\end{lemma}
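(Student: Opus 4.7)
The plan is to construct $\tilde{\mathcal{W}}$ directly from the retraction $\rho\colon M\to S$, rather than through the Malnormal Special Quotient Theorem (which would be necessary only under the weaker hypothesis mentioned in the remark). The composition of the covering projection $p\colon M'_n\to M$ with $\rho$ induces a homomorphism $\phi\colon G\to \pi_1(S)$, where $G=\pi_1(W_n(\beta))$; crucially, its restriction to each peripheral subgroup $H_\pm:=\pi_1(\partial_\pm W_n(\beta))$ is the natural identification with $\pi_1(S)$, because $\rho|_S$ is homotopic to $\mathrm{id}_S$ and $p|_{\partial_\pm W_n(\beta)}$ is a homeomorphism onto $S$.

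To accommodate the factoring requirement, I would first replace $\mathcal{W}''$ by its regular core: let $K=\pi_1(\mathcal{W}'')\leq G$ and let $K^\circ\trianglelefteq G$ be the normal core of $K$, which is finite-index in $G$ and contained in $K$. The intersections $B_\pm:=K^\circ\cap H_\pm$, viewed as finite-index subgroups of $\pi_1(S)$ via $\phi|_{H_\pm}$, are then well-defined. I would choose the characteristic cover $\tilde{S}^*$ so that $\pi_1(\tilde{S}^*)\subseteq B_+\cap B_-$; this is possible since surface groups are residually finite and thus possess characteristic subgroups of any sufficiently large finite index.

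Given any further characteristic cover $\tilde{S}$ of $\tilde{S}^*$, set $T:=\pi_1(\tilde{S})$, a characteristic subgroup of $\pi_1(S)$ still contained in $B_\pm$. I would take $\tilde{\mathcal{W}}$ to be the cover of $W_n(\beta)$ corresponding to
$$N\;:=\;\phi^{-1}(T)\cap K^\circ.$$
The preimage $\phi^{-1}(T)$ is normal in $G$ because $T$ is normal in $\pi_1(S)$, hence $N$ is normal of finite index, and the corresponding $\tilde{\mathcal{W}}$ is regular. The inclusion $N\leq K^\circ\leq K$ yields the factoring through $\mathcal{W}''$.

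For the boundary-characteristic property, every boundary component of $\tilde{\mathcal{W}}$ above $\partial_\pm W_n(\beta)$ corresponds, by regularity, to the intersection
$$N\cap H_\pm \;=\; \phi^{-1}(T)\cap K^\circ\cap H_\pm \;=\; T\cap B_\pm \;=\; T,$$
where the last equality uses the arrangement $T\leq B_\pm$. Thus each boundary component is isomorphic to $\tilde{S}$ as a cover of $S$. The subtle point of the argument is the need to pass to the normal core $K^\circ$ before selecting $\tilde{S}^*$: only then is the boundary intersection $B_\pm$ well-defined in $\pi_1(S)$, which allows $T$ to be chosen inside $B_\pm$ and to survive unshrunken as $T\cap B_\pm=T$. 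Working with the non-normal $K$ directly would make the boundary intersection depend on the basepoint coset and could strictly reduce $T$ to a proper subgroup.
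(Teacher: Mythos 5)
Your proof is correct, and it takes a genuinely different route from the paper's. The paper deduces the lemma from Wise's Malnormal Special Quotient Theorem: the retraction hypothesis is used only to establish that the peripheral pair $H_\pm$ is malnormal in the word hyperbolic group $G$, after which MSQT plus residual finiteness of the Dehn-filled quotient $G/\langle\langle\tilde H_+,\tilde H_-\rangle\rangle$ produces the desired regular cover. You instead use the retraction much more directly: the induced surjection $\phi=\rho_*\circ p_*\colon G\to\pi_1(S)$ restricts on each $H_\pm$ to an isomorphism onto $\pi_1(S)$, and you simply pull back a deep characteristic subgroup of $\pi_1(S)$ and intersect with the normal core of $\pi_1(\mathcal{W}'')$. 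This is more elementary and avoids MSQT entirely. The price, as the paper's remark after the lemma makes explicit, is that the MSQT argument also proves the statement under the weaker hypothesis that $V$ merely has incompressible acylindrical boundary, whereas your $\phi$ has no analogue without a retraction. Two small points of precision. First, the identifications $\phi|_{H_+}$ and $\phi|_{H_-}$ agree with the geometric identifications $\pi_1(\partial_\pm W_n(\beta))\cong\pi_1(S)$ only up to an inner automorphism of $\pi_1(S)$ (basepoint paths and the retraction introduce a conjugation by an element of $\pi_1(S)$); this is harmless precisely because $T$ is characteristic, hence normal in $\pi_1(S)$, but it is worth noting since the boundary-characteristic condition is stated in terms of the geometric identification, not $\phi$. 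Second, the operative reason for the normal core is not so much that $B_\pm$ would be ``ill-defined'' for non-normal $K$ --- $K\cap H_\pm$ is a perfectly good subgroup --- but rather that regularity of $N$ is what forces every preimage boundary component over $\partial_\pm W_n(\beta)$ to correspond to the same subgroup $N\cap H_\pm$; without it, distinct components would correspond to $N\cap H_\pm^g$ for varying $g$ and could fail to be uniformly isomorphic to $\tilde S$.
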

	
	\begin{proof}
		Observe that it suffices to prove for any connected $\mathcal{W}''$,
		otherwise taking $\tilde{S}^*$ to be a common characteristic finite cover of those 
		constructed componentwise. If $\beta\in\mathscr{B}_n$ is a block of level $j$ other than $\infty$, 
		the block of pieces $W_n(\beta)$	is homeomorphic to $M'_j$ removing a lift of $S$. 
		Therefore, it suffices to prove for $W_n(\beta)\cong V$, 
		otherwise arguing using $M'_j$ instead of $M$.
		We may also assume that any given $\mathcal{W}''$ 
		is characteristic over $V$, 
		otherwise replacing it with a further one such.
				
		Because $M$ is atoroidal and $S$ is a retract of $M$,
		the inclusion of $S$ into $M$ induces an embedding of $H=\pi_1(S)$ 
		into $\pi_1(M)$ 
		is as (a representative of the conjugacy class of) 
		a malnormal subgroup.
		Accordingly, the peripheral subgroups
		$H_\pm=\pi_1(\partial_\pm V)$ of the word hyperbolic group $G=\pi_1(V)$
		form a malnormal pair of quasi-convex subgroups.
		By \cite[Theorem 16.6]{Wise-book}, $G$ is virtually special.
		Moreover, it is implied by Wise's Malnormal Special Quotient Theorem 
		\cite[Theorem 12.3]{Wise-book} that there exist finite-index subgroups
		$\tilde{H}^*_\pm$ of $H_\pm$ with the following property:
		For any further finite-index subgroup $\tilde{H}_\pm$ of $\tilde{H}^*_\pm$,
		the quotient $G\,/\,\langle\langle \tilde{H}_+,\tilde{H}_-\rangle\rangle$,
		of $G$ by the normal closure of $\tilde{H}_\pm$,
		is word hyperbolic and virtually special. 
		In particular, it is residually finite.
		Without loss of generality,
		we may assume that $\tilde{H}^*_\pm$ are chosen to be characteristic in $H_\pm$,
		and isomorphic to the same characteristic finite-index subgroup $\tilde{H}^*$ of $H$.
		Since the given $G''=\pi_1(\mathcal{W}'')$ is characteristic
		as we have assumed in addition, 
		we may also assume that $\tilde{H}^*$ is chosen so deep
		that any conjugate of $H_\pm$ intersects $G''$ in exactly 
		the corresponding conjugate of $\tilde{H}^*_\pm$.
		Finally, we take the asserted characteristic finite cover $\tilde{S}^*$
		of $S$ to be the one corresponding to $\tilde{H}^*$.
		
		To verify the stated property,
		for any characteristic cover $\tilde{S}$ of $\tilde{S}^*$,
		denote by $\tilde{H}_\pm\cong\tilde{H}=\pi_1(\tilde{S})$ 
		the corresponding subgroup of	$\tilde{H}^*_\pm\cong \tilde{H}^*=\pi_1(\tilde{S}^*)$.
		Using the residual finiteness of $G\,/\,\langle\langle \tilde{H}_+,\tilde{H}_-\rangle\rangle$,
		we can find a finite-index normal subgroup $\tilde{G}$ of $G$
		which intersects $H_\pm$ in exactly $\tilde{H}_\pm$.
		Take $\tilde{G}''$ to be $\tilde{G}\cap G''$.
		Then any conjugate of $\tilde{H}_\pm$ intersects
		$\tilde{G}''$ in exactly the corresponding conjugate of $\tilde{H}_\pm$ as well.
		Therefore, the regular finite cover $\tilde{\mathcal{W}}$ of $V$
		corresponding to $\tilde{G}$ has every boundary component
		isomorphic to $\tilde{S}$, and the covering projection
		$\tilde{\mathcal{W}}\to V$ factors through the given intermediate cover $\mathcal{W}''$.		
	\end{proof}

	\subsection{Construction of the asserted tower}
	We construct the asserted tower of Proposition \ref{exponentialTorsionGrowthSpecial}
	adopting the notations and assumptions there.
	
	Given a constant $0<\epsilon<1$, choose a sufficiently large
	odd positive integer $d$ as provided by Lemma \ref{heavyDust}.
	The notations such as $s_n$, $[d^n]$, $\mathscr{B}_n$ from the generalized digital expansion 
	with respect to $d$	remains effective for the rest of this section.
	Note that the existence of the retract $\rho$ forces the oriented connected subsurface $S$
	to be non-separating. Denote by
		$$\cdots\longrightarrow (M'_n,o'_n)\longrightarrow \cdots\longrightarrow (M'_2,o'_2)\longrightarrow (M'_1,o'_1)\longrightarrow (M,*)$$
	the tower of finite cyclic covers dual to $S$, of degree $d^{s_n}$ over $M$, and with lifted base points, as before.
	
	Take a sequence which includes all the elements of $\pi_1(M,*)$, denoted by
		$$\{\,g_n\in\pi_1(M,*)\,\}_{n\in\Natural}.$$
	By inserting trivial elements between terms,
	we may assume that the sequence 
	satisfies the following additional properties:
	\begin{itemize}
	\item The first element $g_1$ is trivial.
	\item For every $n\in\Natural$, the algebraic intersection number 
	$\langle [g_n], [S]\rangle$ is bounded strictly by $d^{s_n}$ in absolute value.
	Here $[g_n]\in H_1(M;\Integral)$ and $[S]\in H_2(M;\Integral)$ 
	are the homology classes accordingly.
	\item Furthermore, if $\langle [g_n], [S]\rangle$ equals $0$, then the based lift of $g_n$ 
	in $M'_n$ is contained in the base block of pieces $W_n(\beta_0)$, up to based homotopy. 
	\end{itemize}
	
	Under the above setting, the asserted tower can be constructed by the following lemma.
	In the context of coverings spaces,
	the term \emph{elevation} is customarily used 
	to mean a preimage component of a sub-manifold
	in the referred cover, 
	so as to distinguish from the more common term \emph{lift},
	which is equivalently a homeomorphic elevation.
	
	\begin{lemma}\label{constructionAssertedTower}
	Under the assumptions of Proposition \ref{exponentialTorsionGrowthSpecial} and
	with the notations above,
	there are finite connected characteristic covers $\{\tilde{S}_n\to S\}_{n\in\Natural}$
	and base-pointed connected finite (irregular) covers 
	$\{\,(\tilde{M}_n,\tilde{o}_n)\to(M'_n,o'_n)\,\}_{n\in\Natural}$
	such that the following properties are satisfied:
	\begin{itemize}
	\item Each $\tilde{M}_{n+1}$ is a finite cover of $\tilde{M}_n$,
	and the covering maps fit into the commutative diagram:
	$$\begin{CD}
	\tilde{M}_{n+1} @>>> \tilde{M}_n\\
	@VVV @VVV\\
	M'_{n+1} @>>> M'_n
	\end{CD}$$
	\item 
	The element $g_n$ does not lift into $\tilde{M}_n$ based at $\tilde{o}_n$ for any $n\in\Natural$,
	unless it is trivial.
	\item 
	For each block $\beta\in\mathscr{B}_n$ of level $\infty$
	and for every elevation 
	$\tilde{W}$  in $\tilde{M}_n$ of $W_n(\beta)\cong V$, 
	the induced covering map $\tilde{W}\to V$
	is isomorphic to 
	the pull-back of the covering map $\tilde{S}_n\to S$ via the composition:
	$$\begin{CD}
	V @>\mathrm{incl.}>> M @>\rho>> S.
	\end{CD}$$	
	\end{itemize}
	
	\end{lemma}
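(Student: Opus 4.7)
The plan is to construct $(\tilde{M}_n, \tilde{o}_n)$ together with characteristic covers $\tilde{S}_n$ of $S$ by induction on $n$. The two key tools are the Omnipotence Lemma (Lemma \ref{omnipotence}), which produces boundary-characteristic finite covers of blocks of pieces with a prescribed common boundary cover of $S$, and the residual finiteness of $\pi_1(W_n(\beta))$, inherited from the hyperbolic group $\pi_1(M)$, which allows us to destroy the elements $g_n$. For the base case $n=1$, the element $g_1$ is trivial, so we simply set $\tilde{M}_1 = M'_1$, $\tilde{o}_1 = o'_1$, and $\tilde{S}_1 = S$; every level-$\infty$ block in $\mathscr{B}_1$ is a single piece $V_1(a) \cong V$ with $a \neq 0$, and the identity cover $V \to V$ is trivially the pullback of $S \to S$ via $V \hookrightarrow M \xrightarrow{\rho} S$.

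For the inductive step, assume $(\tilde{M}_n, \tilde{o}_n)$ and $\tilde{S}_n$ have been built. Pull back $\tilde{M}_n \to M'_n$ along the cyclic covering $M'_{n+1} \to M'_n$ to obtain $\tilde{M}''_{n+1} \to M'_{n+1}$ with a distinguished lift $\tilde{o}''_{n+1}$ of $o'_{n+1}$, and write $\mathcal{W}''_\beta$ for its restriction to each block $W_{n+1}(\beta)$. First deal with the base block $\beta_0^{(n+1)}$ of level $n$: if $\langle [g_{n+1}], [S]\rangle \neq 0$ then by our ordering of the $g_n$ the element $g_{n+1}$ already fails to lift to $M'_{n+1}$ at $o'_{n+1}$, and we set $\mathcal{W}' := \mathcal{W}''_{\beta_0^{(n+1)}}$; otherwise the based lift of $g_{n+1}$ is contained in $W_{n+1}(\beta_0^{(n+1)})$, and residual finiteness furnishes a finite cover $\mathcal{W}' \to \mathcal{W}''_{\beta_0^{(n+1)}}$ to which it does not lift at $\tilde{o}''_{n+1}$. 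Apply Lemma \ref{omnipotence} to $\mathcal{W}'$ and, for each non-base block $\beta$ of level $<\infty$, to $\mathcal{W}''_\beta$, obtaining characteristic finite covers $\tilde{S}^*_\beta$ of $S$. Take $\tilde{S}_{n+1}$ to be a common characteristic finite refinement of $\tilde{S}_n$ and all of the finitely many $\tilde{S}^*_\beta$, which exists because the intersection of finitely many characteristic finite-index subgroups of $\pi_1(S)$ remains characteristic of finite index. Lemma \ref{omnipotence} now yields $\tilde{S}_{n+1}$-boundary-characteristic regular covers $\tilde{\mathcal{W}}_\beta$ for each block of level $<\infty$, factoring through $\mathcal{W}'$ or $\mathcal{W}''_\beta$. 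For a block $\beta$ of level $\infty$, where $W_{n+1}(\beta) \cong V$, take $\tilde{\mathcal{W}}_\beta$ to be the pullback of $\tilde{S}_{n+1} \to S$ along $V \hookrightarrow M \xrightarrow{\rho} S$; since the image of $\beta$ in $M'_n$ is again a block of level $\infty$ (as one checks from the digital expansion), the inductive hypothesis together with the fact that $\tilde{S}_{n+1}$ covers $\tilde{S}_n$ guarantees that $\tilde{\mathcal{W}}_\beta$ factors through $\mathcal{W}''_\beta$.

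Finally we assemble $\tilde{M}_{n+1}$ by gluing the $\tilde{\mathcal{W}}_\beta$ along their shared boundary copies of $\tilde{S}_{n+1}$, in a way compatible with the given gluings of $\tilde{M}''_{n+1}$. This coherent gluing is the main technical obstacle, and is resolved by the characteristic property of $\tilde{S}_{n+1}$: for each identification of boundary lifts of $S$ already present in $\tilde{M}''_{n+1}$, the set of lifts to identifications of the corresponding $\tilde{S}_{n+1}$-boundary components of adjacent $\tilde{\mathcal{W}}_\beta$'s is a nonempty torsor over the deck group of $\tilde{S}_{n+1} \to S$, so a coherent global choice exists. Let $\tilde{o}_{n+1}$ be the lift of $\tilde{o}''_{n+1}$ in $\tilde{\mathcal{W}}_{\beta_0^{(n+1)}}$. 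By construction, the resulting $(\tilde{M}_{n+1}, \tilde{o}_{n+1})$ factors through $\tilde{M}''_{n+1}$ and hence through $\tilde{M}_n$, does not lift $g_{n+1}$ (by the choice of $\mathcal{W}'$), and has the prescribed pullback structure on every level-$\infty$ block, completing the induction.
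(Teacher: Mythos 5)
Your overall architecture matches the paper's proof closely: induction on $n$, the fiber product $M'_{n+1}\times_{M'_n}\tilde{M}_n$ as the intermediate cover, residual finiteness to kill $g_{n+1}$ inside the base block, Lemma \ref{omnipotence} to produce boundary-characteristic covers, a common characteristic refinement $\tilde{S}_{n+1}$, and the pull-back cover $V\times_{\rho|_V}\tilde{S}_{n+1}$ on the level-$\infty$ blocks. All of that is sound and is essentially what the paper does.

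There is, however, a genuine gap at the gluing step. You assert that the gluings can be chosen coherently because the set of lifts of each boundary identification is ``a nonempty torsor over the deck group of $\tilde{S}_{n+1}\to S$.'' That torsor argument only settles \emph{which} identification to use once a pair of boundary components has been matched; it does not address whether the boundary components can be matched up at all. The blocks receive covers of \emph{different} local degrees: the base block is pushed through a deep cover $\mathcal{W}'$ chosen to kill $g_{n+1}$, the other finite-level blocks go through whatever Lemma \ref{omnipotence} produces, and the level-$\infty$ blocks get exactly the pull-back of $\tilde{S}_{n+1}$. Consequently, over a single gluing copy of $S$ in $M'_{n+1}$, the two adjacent pieces $\tilde{\mathcal{W}}_\beta$ and $\tilde{\mathcal{W}}_{\beta'}$ generally have \emph{different numbers} of boundary components (each isomorphic to $\tilde{S}_{n+1}$, but counted with multiplicity proportional to the respective covering degrees), so no orientation-reversing free involution pairing them up exists. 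The paper repairs this with an explicit balancing step: it takes $K_n$ to be the least common multiple of all local covering degrees and replaces each component of $\tilde{\mathcal{W}}_n$ by the appropriate number of disjoint copies of itself, after which all local degrees are equal and the boundary components over each gluing surface are equinumerous on both sides; only then is the involution $\tilde{\nu}$ constructed. Your proof needs this duplication (or some equivalent balancing device) before the assembly of $\tilde{M}_{n+1}$ can go through.
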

	
	\begin{proof}
	We construct $\tilde{S}_n$ and $(\tilde{M}_n,\tilde{o}_n)$ by induction on $n\in\Natural$.
	For $n$ equal to $1$, we can simply take $\tilde{S}_1$ to be $S$ and $(\tilde{M}_1,\tilde{o}_1)$ to be $(M'_1,o'_1)$.
	Suppose that all its previous stages have been completed,
	then we proceed with the $n$-th stage, where $n$ is greater than $1$.
	
	We introduce a few notations. Denote
		$$(\mathcal{M}^\times_n,\,o^\times_n)\,=\,\left(M'_n\times_{M'_{n-1}} \tilde{M}_{n-1},\,(o'_n,\tilde{o}_{n-1})\right),$$
	the fiber product of base-pointed covering spaces.
	The space $\mathcal{M}^\times_n$ is a possibly disconnected base-pointed finite cover of $M'_{n-1}$
	that factors through both $M'_n$ and $\tilde{M}_{n-1}$.
	It can be concretely described as follows:
	As $d$ is an odd number, there is a unique lift $S'_{n-1}$ of $S$ in $M'_{n-1}$ 
	furthermost from $o'_{n-1}$; 
	therefore, $\mathcal{M}^\times_n$ can be obtained as a cyclic cover of $\tilde{M}_{n-1}$
	of degree $d^n$, which is dual to the preimage of $S'_{n-1}$.
	For any block $\beta\in\mathscr{B}_n$, denote by $\mathcal{W}^\times_n(\beta)$
	the preimage of the block of pieces $W_n(\beta)\subset M'_n$.
	Observe that $\mathcal{W}^\times_n(\beta)$ are all $\tilde{S}_{n-1}$--boundary-characteristic
	over $W_n(\beta)$. 
	In fact,  by the induction hypothesis,
	for any block $\gamma\in\mathscr{B}_{n-1}$ of level $\infty$,
	every elevation $\tilde{W}$ of $W_{n-1}(\gamma)$ in $\tilde{M}_{n-1}$
	is all $\tilde{S}_{n-1}$--boundary-characteristic.
	Then the observation follows as every component of $\mathcal{W}^\times_n(\beta)$
	is next to a lift of such $\tilde{W}$.
	Denote the disjoint union of all $\mathcal{W}^\times_n(\beta)$ by
		$$\mathcal{W}^\times_n\,=\,\bigsqcup_{\beta\in\mathscr{B}_n}\,\mathcal{W}^\times_n(\beta).$$
	The gluing is formally given by an orientation-reversing free involution:
		$$\begin{CD}\partial\mathcal{W}^\times_n @> \nu^\times >>\partial\mathcal{W}^\times_n\end{CD}$$
	which commutes with the underlying gluing identification between the components of all $\partial W_n(\beta)$
	via the covering projection.
	Identifying each orbit of $\nu^\times_n$ to a point yields
	$$\mathcal{M}^\times_n\,=\,\mathcal{W}^\times_n\,/\,\nu^\times_n.$$
	
	There are two simple cases where the construction of $\tilde{S}_n$ and $(\tilde{M}_n,\tilde{o}_n)$
	is straightforward. If the element $g_n\in\pi_1(M,*)$ is trivial, $\tilde{S}_n$ can be taken simply
	as $\tilde{S}_{n-1}$, and $(\tilde{M}_n,\tilde{o}_n)$ can be taken
	as the base component of $(\mathcal{M}^\times_n,o^\times_n)$. Similarly, if the algebraic intersection number
	$\langle [g_n],[S]\rangle$ in $M$ is nontrivial, hence less than $d^{s_n}$
	in absolute value.
	By the assumptions on $\{g_n\}_{n\in\Natural}$, 
	the element $g_n$ does not lift to $M'_n$ or $\mathcal{M}^\times_n$.
	So again, we can take 
	$\tilde{S}_n$ and $(\tilde{M}_n,\tilde{o}_n)$ the same as the trivial case.
	
	It remains to prove the essential case when $g_n$ is nontrivial with $\langle [g_n],[S]\rangle=0$ in $M$. 
	In this case, $g_n$ lifts to be a nontrivial element of $\pi_1(M'_n,o'_n)$.
	Moreover, abusing the notation, we have 
	$$g_n\in \pi_1(W_n(\beta_0),o'_n)$$ 
	by the assumption of $\{g_n\}_{n\in\Natural}$.
	Denote by $\mathcal{W}^\times_n(\beta_0)$ the preimage of $W_n(\beta_0)$ in $\mathcal{M}^\times_n$.
	By the residual finiteness of $\pi_1(W_n(\beta_0),o'_n)$, there exists a further regular finite cover 
	$\mathcal{W}''_n(\beta_0)$ of $W_n(\beta_0)$ which factors through
	$\mathcal{W}^\times_n(\beta_0)$, such that $g_n$ does not lift to $\mathcal{W}''_n(\beta_0)$.
	By choosing any base point $o''_n$ lifting $o^\times_n$, we have
		$$g_n\not\in\pi_1(\mathcal{W}''_n(\beta_0),o''_n).$$
		
	Note that the furthermost lift $S'_{n-1}$ of $S$ in $M'_{n-1}$ above has the property that
	its complement lifts into	$M'_n$ as the interior of $W_n(\beta_0)$.
	Moreover, $S'_{n-1}$ is a retract of $M'_{n-1}$,
	since $S$ is a retract of $M$ as assumed by Proposition \ref{exponentialTorsionGrowthSpecial}.
	By applying Lemma \ref{omnipotence} with respect to $M'_{n-1}$, $S'_{n-1}$ and $\mathcal{W}''_n(\beta_0)$,
	we obtain a finite characteristic cover $\tilde{S}^*_n(\beta_0)$ of $S'_{n-1}\cong S$.
	It has the property that for any further finite characteristic cover of $S$,
	boundary-characteristic finite covers of $\mathcal{W}''_{n-1}(\beta_0)$ with the prescribed boundary pattern
	exist, and can be constructed to factor through $\mathcal{W}''_n(\beta_0)$.	
	Similarly, for each block $\beta\in\mathscr{B}_n$ of level $j$ other than $\infty$ or $(n-1)$,
	apply Lemma \ref{omnipotence} 
	with respect to $M'_j$, $S'_j$ and $\mathcal{W}^\times_n(\beta)$ to obtain 
	a finite characteristic cover $\tilde{S}^*_n(\beta)$ of $S$;
	for each $\beta\in\mathscr{B}$ of level $\infty$,
	we can simply take $\tilde{S}^*_n(\beta)$ to be $\tilde{S}_{n-1}$.
	We take a finite characteristic cover
		$$\tilde{S}^*_n\longrightarrow S$$
	which factors through $\tilde{S}^*_n(\beta)$ for all $\beta\in\mathscr{B}_n$.
	Note that $\tilde{S}^*_n\to S$ factors through $\tilde{S}_{n-1}$.
	
	To construct the claimed $\tilde{S}_n$ and $(\tilde{M}_n,\tilde{o}_n)$,
	we first take the claimed finite characteristic cover
		$$\tilde{S}_n\longrightarrow S$$
	to be a finite characteristic cover of $\tilde{S}_{n-1}$ which factors through $\tilde{S}^*_n$.
	The claimed base-pointed finite cover $(\tilde{M}_n,\tilde{o}_n)$ can be constructed
	by merging $\tilde{S}_n$--boundary-characteristic covers of $\mathcal{W}^\times_n(\beta)$ as follows.
	
	For each block $\beta\in\mathscr{B}_n$,
	we first construct an $\tilde{S}_n$--boundary-characteristic
	cover $\tilde{\mathcal{W}}_n(\beta)$ of $\mathcal{W}_n^\times(\beta)$,
	in the following way.
	For the block $\beta_0\in\mathscr{B}_n$ of level $(n-1)$,
	take an $\tilde{S}_n$--boundary-characteristic finite cover of 
	$\mathcal{W}^\times_n(\beta_0)$	which factors through $\mathcal{W}''_n(\beta_0)$,
	denoted as $\tilde{\mathcal{W}}_n(\beta_0)$. 
	Similarly, for each block $\beta\in\mathscr{B}_n$ of level $j$
	other than $\infty$ or $(n-1)$, 
	take an $\tilde{S}_n$--boundary-characteristic finite cover of 
	$\mathcal{W}^\times_n(\beta)$. 
	For each block $\beta\in\mathscr{B}_n$ of level $\infty$,
	we need a more specific construction of $\tilde{\mathcal{W}}_n(\beta)$ 
	to meet the third property of Lemma \ref{constructionAssertedTower}.
	Note that	$\mathcal{W}^\times_n(\beta)$
	is isomorphic to 
	$\tilde{\mathcal{W}}_{n-1}(\gamma)\times[d^n]$ for some level--$\infty$ block
	$\gamma\in\mathscr{B}_{n-1}$.
	Therefore, by the induction hypothesis,
	any component $W^\times$ of $\mathcal{W}^\times_n(\beta)$,
	as a covering space of $W_n(\beta)\cong V$,
	is isomorphic to the pull-back via $\tilde{S}_{n-1}\to S$ of 
	$$\begin{CD}V @>\rho_V >> S,\end{CD}$$
	the composition of the retraction $\rho$ of $M$ to $S$ with
	the inclusion of $V$ into $M$. In other words,
	it is isomorphic to a fiber product:
		$$W^\times\cong V\times_{\rho_V} \tilde{S}_{n-1}.$$
	Denote the $\tilde{S}_n$--boundary-characteristic cover of $W_n(\beta)\cong V$:
		$$\tilde{W}\cong V\times_{\rho_V} \tilde{S}_n,$$
	then $\tilde{W}$ is isomorphic to the pull-back of $\tilde{S}_n$
	via $\rho_V$ and the covering projection factors through $W^\times$.
	As $W^\times$ runs over the components of $\mathcal{W}^\times(\beta)$,
	take $\tilde{\mathcal{W}}_n(\beta)$
	for the level--$\infty$ block $\beta$ to be the disjoint union of all the $\tilde{W}$
	accordingly.
	Let us formally
	put together all the building parts $\tilde{\mathcal{W}}_n(\beta)$ that we have constructed above:
		$$\tilde{\mathcal{W}}_n\,=\,\bigsqcup_{\beta\in\mathscr{B}_n}\,\tilde{\mathcal{W}}_n(\beta),$$
	
	At this point, we may not yet be able to obtain a closed manifold
	by assembling the components of $\tilde{\mathcal{W}}_n$,
	because the boundary components of $\tilde{\mathcal{W}}_n$
	that we wanted to glue up together 
	may not be balanced in amount between opposite orientations.
	We need to suitably duplicate the components of $\tilde{\mathcal{W}}_n$
	to meet the balance condition.
	A simple solution is to introduce a quantity
		$$K_n\,=\,K_n(\tilde{\mathcal{W}}_n\to \mathcal{W}^\times)$$
	which is defined to be the least common multiple of all the local covering degrees
	$[\tilde{\mathcal{W}}_n:\mathcal{W}^\times_n]_{W^\times}$ where $W^\times$
	runs over all the components of $\mathcal{W}^\times_n$.
	Here the (unsigned) local covering degree $[\tilde{\mathcal{W}}_n:\mathcal{W}^\times_n]_{W^\times}$
	is defined to be 
	the number of lifts in $\tilde{\mathcal{W}}_n$ for any point of $W^\times$.
	Replace the preimage of each component $W^\times$ by 
	the disjoint union of $K_n\,/\,[\tilde{\mathcal{W}}_n:\mathcal{W}^\times_n]_{W^\times}$ copies of itself.
	It is easy to see that the new cover $\tilde{\mathcal{W}}_n$ has constant local degree
	$[\tilde{\mathcal{W}}_n:\mathcal{W}^\times_n]_{W^\times}\,=\,K_n$ for all $W^\times$.
	So the balance condition is satisfied as $\tilde{\mathcal{W}}_n$ is already boundary-characteristic
	over $\mathcal{W}^\times_n$.
	
	We can construct the claimed $\tilde{M}_n$ 
	by gluing up the components of $\tilde{\mathcal{W}}_n$ along boundary.
	There is a fairly routine procedure to do so.
	We provide some details below
	for the reader's reference.
	(See \cite{DLW,PW-embedded} for similar constructions with respect to JSJ decompositions.)
	The balance condition allows us to construct
	a free involution $\tilde{\nu}_\sharp$ which pairs up oppositely oriented boundary components 
	and commutes with the pairing free involution $\nu^\times_\sharp$ via the covering projection,
	namely, the following diagram commutes:
	$$\begin{CD}
	\pi_0(\partial\tilde{\mathcal{W}}_n) @> \tilde{\nu}_\sharp >> \pi_0(\partial\tilde{\mathcal{W}}_n)\\
	@VVV @VVV\\
	\pi_0(\partial\mathcal{W}^\times_n) @> \nu^\times_\sharp >> \pi_0(\partial\mathcal{W}^\times_n)\\
	\end{CD}$$
	Furthermore, suppose that $\tilde{P}_\pm\in \pi_0(\partial\tilde{\mathcal{W}}_n)$ is a pair of oppositely
	oriented boundary components whose projection $P^\times_\pm\in \pi_0(\partial\mathcal{W}^\times_n)$
	satisfies $\nu^\times_\sharp(P^\times_+)=P^\times_-$.
	Since the covering projections $\tilde{P}_\pm\to P^\times_\pm$ are characteristic modeled on
	$\tilde{S}_n\to\tilde{S}_{n-1}$, we can promote the gluing free involution $\nu^\times|_{P^\times_\pm}$ to
	$\tilde\nu_{\tilde{P}_\pm}$, namely, the following diagram commutes:
	$$\begin{CD}
	\tilde{P}_\pm @> \tilde{\nu} >> \tilde{P}_\mp\\
	@VVV @VVV\\
	P^\times_\pm @> \nu^\times >> P^\times_\mp\\
	\end{CD}$$
	Promoting $\nu^\times$ to all the components of $\partial\tilde{\mathcal{W}}_n$,
	which have been paired up by $\tilde\nu_\sharp$, we obtain an orientation-reversing free involution
	$$\begin{CD}\partial\tilde{\mathcal{W}}_n @> \tilde\nu >> \partial\tilde{\mathcal{W}}_n.\end{CD}$$
	
	Finally, we take the claimed finite cover of $M'_n$ to be 
	$$\tilde{M}_n\,=\,\tilde{\mathcal{W}}_n\,/\,\tilde{\nu}.$$
	The claimed base point $\tilde{o}_n$ of $\tilde{M}_n$ can be chosen as any lift of $o'_n$ in $\tilde{\mathcal{W}}_n(\beta_0)$.
	It is clear from the construction that
	the claimed properties of Lemma \ref{constructionAssertedTower} are satisfied by
	$\tilde{S}_n$ and $(\tilde{M}_n,\tilde{o}_n)$. This completes the induction.
	\end{proof}

	\subsection{Verification of the asserted properties}
	To briefly summarize what we have done so far, under the assumptions of Proposition \ref{exponentialTorsionGrowthSpecial},
	for any given constant $0<\epsilon<1$, a sufficiently large positive integer $d$ has been chosen
	as guaranteed by Lemma \ref{heavyDust}. Moreover, the following commutative diagram
	of covering maps between base-pointed covers of $M$ has been constructed by Lemma \ref{constructionAssertedTower},
	where the upper row is the asserted tower of Proposition \ref{exponentialTorsionGrowthSpecial}
	and the lower row is the cyclic tower dual to $S$ encoded by the generalized digital expansion
	with respect to $d$:
	$$\begin{CD}
	\cdots @>>> (\tilde{M}_n,\tilde{o}_n) @>>> \cdots @>>> (\tilde{M}_2,\tilde{o}_2) @>>> (\tilde{M}_1,\tilde{o}_1) @>>> (M,*)\\
	@. @VVV @. @VVV @VVV @VV \mathrm{id} V\\
	\cdots @>>> (M'_n,o'_n) @>>> \cdots @>>> (M'_2,o'_2) @>>> (M'_1,o'_1) @>>> (M,*)
	\end{CD}$$
	As before, we denote by $W_n(\beta)$ the block of pieces of $M'_n$ and $\tilde{\mathcal{W}}_n(\beta)$
	their preimage in $\tilde{M}_n$ accordingly.
	
	It remains to verify the requirements of Proposition \ref{exponentialTorsionGrowthSpecial}
	are satisfied.
	The injectivity radii of $\tilde{M}_n$ at $\tilde{o}_n$ tends to infinity as $n$ grows,
	because of the second property of Lemma \ref{constructionAssertedTower}.
	The estimate of lifts of $\iota_\Sigma$ is a consequence of the third property
	of Lemma \ref{constructionAssertedTower}. In fact, for every block $\beta\in\mathscr{B}_n$
	of level $\infty$, any component $\tilde{W}$ of $\tilde{\mathcal{W}}_n$ 
	is a regular finite cover of $V$
	isomorphic the pull-back of the characteristic finite cover $\tilde{S}_n\to S$ via 
	the composition $\rho_V\colon V\to S$
	of the retraction $\rho$ of $M$ to $S$ with the inclusion of $V$ into $M$.
	Since $\rho\circ\iota_\Sigma$ is homotopic to a constant point map,
	every elevation of $\Sigma$ into $\tilde{W}$ is a lift, and the number of such lift
	is the covering degree $[\tilde{S}_n:S]$.
	Therefore, the number of lifts of $\iota_\Sigma$
	into $\tilde{\mathcal{W}}_n(\beta)$, for any level--$\infty$
	blocks $\beta\in\mathscr{B}_n$, equals 
	$$|\pi_0(\tilde{\mathcal{W}}_n(\beta))|\cdot[\tilde{S}_n:S]\,=\,[\tilde{M}_n:M'_n].$$
	The number of lifts of $\iota_\Sigma$ into $\tilde{M}$ is 
	at least the number of such lifts, which is at least
	$$(1-\epsilon)\cdot d^{s_n}\cdot [\tilde{M}_n:M'_n]\,=\,(1-\epsilon)\cdot[\tilde{M}_n:M],$$
	as asserted. 
	The estimate of the homological torsion size follows immediately from the topological fact:
	
	\begin{lemma}\label{oddChiAndTor}
		If $\Sigma_1,\cdots,\Sigma_m$ are mutually disjointly embedded closed subsurfaces of odd Euler characteristic
		in an orientable compact $3$-manifold $N$, then $H_1(N;\Integral)$ 
		contains a submodule isomorphic to $\Integral_2^{\oplus m}$.
	\end{lemma}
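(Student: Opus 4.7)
The plan is to construct, for each $i$, a loop $\eta_i \subset N$ that meets $\Sigma_i$ transversely in exactly one point while being disjoint from every other $\Sigma_j$, and then to show that the classes $[\eta_1],\ldots,[\eta_m]\in H_1(N;\Integral)$ generate a subgroup isomorphic to $\Integral_2^{\oplus m}$.

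For the construction, an odd Euler characteristic forces a non-orientable component, so I may assume each $\Sigma_i$ is connected and non-orientable. Since $N$ is orientable, a regular neighborhood $U_i$ of $\Sigma_i$ is a twisted $I$-bundle over $\Sigma_i$, with boundary $\partial U_i$ the connected orientation double cover $\widetilde{\Sigma}_i\to\Sigma_i$. I choose the $U_i$ pairwise disjoint, and define $\eta_i$ as the concatenation of a fiber arc $\alpha_i$ crossing $\Sigma_i$ once (with endpoints $p_i^+,p_i^-$ on $\partial U_i$) with a path $\beta_i\subset\widetilde{\Sigma}_i$ joining $p_i^-$ back to $p_i^+$. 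By construction, $\eta_i$ meets $\Sigma_i$ transversely in one point and avoids every other $\Sigma_j$.

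To see that $2[\eta_i]=0$ in $H_1(N;\Integral)$, I would use that $\overline{U}_i$ deformation retracts to $\Sigma_i$. Under this retraction $\eta_i$ maps to an orientation-reversing loop, i.e.\ the core of a M\"obius band in $\Sigma_i$. Such a loop generates the unique $\Integral_2$-summand of $H_1(\Sigma_i;\Integral)$, so $2[\eta_i]$ already vanishes in $H_1(\overline{U}_i;\Integral)$, and hence in $H_1(N;\Integral)$.

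For linear independence, suppose $\sum_i a_i[\eta_i]=0$ in $H_1(N;\Integral)$ with $a_i\in\{0,1\}$. Reducing modulo $2$ and pairing with $[\Sigma_j]\in H_2(N,\partial N;\Integral_2)$ via the $\Integral_2$-intersection pairing gives $\sum_i a_i(\eta_i\cdot\Sigma_j)\equiv a_j\pmod{2}$ for each $j$, using $\eta_i\cdot\Sigma_j=\delta_{ij}$ by construction; the relation therefore forces every $a_j=0$. The main subtlety, and the essential reason the argument tolerates non-orientability, is the well-definedness of this pairing: although $\Sigma_j$ is not $\Integral$-orientable, it carries a $\Integral_2$-fundamental class, so its mod-$2$ intersection number with a transverse mod-$2$ $1$-cycle is a well-defined invariant of its class in $H_1(N;\Integral_2)$. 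Hence the $[\eta_i]$ generate a copy of $\Integral_2^{\oplus m}$ inside $H_1(N;\Integral)$, as claimed.
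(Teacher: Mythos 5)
Your overall strategy --- build a loop $\eta_i$ dual to $\Sigma_i$, show it is $2$-torsion, and separate the $\eta_i$ by mod-$2$ intersection with the $\Sigma_j$ --- is reasonable, and the independence step (pairing with the $\Integral_2$-fundamental classes of the $\Sigma_j$, which gives $\eta_i\cdot\Sigma_j=\delta_{ij}$) is correct. But there is a genuine gap at the step $2[\eta_i]=0$. You assert that the retraction of $\eta_i$ into $\Sigma_i$, being an orientation-reversing loop, ``generates the unique $\Integral_2$-summand of $H_1(\Sigma_i;\Integral)$.'' That is false for every non-orientable surface other than the projective plane: writing $\Sigma_i$ as a connected sum of $k$ projective planes with cross-cap curves $a_1,\dots,a_k$, the torsion class is $t=[a_1]+\cdots+[a_k]$, while e.g.\ $a_1$ itself (for $k\ge 2$), or $ab$ in the Klein bottle, is orientation-reversing of infinite order in $H_1(\Sigma_i;\Integral)$. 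Since your $\beta_i$ is an arbitrary path in $\partial U_i$ joining the two sheets, the projected loop $\gamma_i$ is merely \emph{some} orientation-reversing loop, and then $2[\eta_i]$ has no reason to vanish even in $H_1(\overline{U}_i;\Integral)$. A symptom of the problem is that your argument never uses odd Euler characteristic beyond non-orientability, whereas the statement genuinely needs it: for $k$ even one has $\langle w_1(\Sigma_i),t\rangle=k\equiv 0$, so \emph{every} orientation-reversing loop in $\Sigma_i$ has infinite order in $H_1(\Sigma_i;\Integral)$ and your construction cannot produce a torsion class at all. The fix is available exactly when $\chi(\Sigma_i)=2-k$ is odd: then $\langle w_1(\Sigma_i),t\rangle=k\equiv 1\pmod 2$, so the torsion class $t$ \emph{is} represented by an orientation-reversing loop; that loop lifts to an arc in the orientation double cover $\partial U_i$ joining the two endpoints of the fiber $\alpha_i$, and you must \emph{choose} $\beta_i$ to be such an arc. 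With that choice $\gamma_i$ represents $t$, hence $2[\eta_i]=0$, and the rest of your argument closes up.

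For comparison, the paper argues cohomologically rather than by constructing dual loops: it passes to the Poincar\'e duals $[\Sigma_i]\in H^1(N;\Integral_2)$, uses the Hayat-Legrand--Wang--Zieschang criterion $[\Sigma_i]^3\neq 0$ (this is where odd $\chi$ enters there) together with $[\Sigma_i][\Sigma_j]=0$ for $i\neq j$, and concludes via the Universal Coefficient Theorem. Your approach, once repaired as above, is an essentially dual, more hands-on route; its advantage is that it exhibits explicit $2$-torsion classes, at the cost of the genus-dependent bookkeeping you elided.
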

	
	\begin{remark}
		The author thanks I. Agol for pointing out to him
		the connection between this fact and exponential torsion growth in dual-graph covers. 
		Theorem \ref{exponentialTorsionGrowth}
		grows out of that interesting observation.
	\end{remark}
	
	\begin{proof}
		Each $[\Sigma_i]$ represents an element $[\Sigma_i]\in H^1(N;\Integral_2)$ via Poincar\'e duality.
		Since $\Sigma_i$ has odd Euler characteristic, in the cohomology ring of $\Integral_2$ coefficients,
		$[\Sigma_i]^3$ is nontrivial by \cite[Theorem 4.1]{HWZ-lens},
		but $[\Sigma_i][\Sigma_j]$ is $0$ for every distinct pair $i,j$ by the mutual disjointness.
		It follows that $[\Sigma_i]$ cannot be be lifted to $H^1(N;\Integral)$
		but	they span an $m$-dimensional subspace of $H^1(N;\Integral_2)$.
		By the Universal Coefficient Theorem,
		$H_1(N;\Integral)$ contains a torsion submodule isomorphic to $\Integral_2^{\oplus m}$.
	\end{proof}
	
	This completes the proof of Proposition \ref{exponentialTorsionGrowthSpecial}.
	
\section{Application to uniform lattices of $\PSL(2,\Complex)$}\label{Sec-uniformLattices}

	In this section, we derive the existence of exhausting nested sequence of
	finite index subgroup with exponential homological torsion growth
	for uniform lattices of $\PSL(2,\Complex)$,
	proving Theorem \ref{exponentialTorsionGrowth}.
	
	Let $\Gamma$ be any uniform lattice of $\PSL(2,\Complex)$.
	Take a torsion-free finite-index subgroup $\dot{\Gamma}$ of $\Gamma$,
	so the quotient space 
		$$N=\Hyp\,/\,\dot{\Gamma}$$
	is an orientable closed hyperbolic $3$-manifold.
	
	By \cite{Agol-VHC} and \cite[Theorem 16.6]{Wise-book},
	we may assume that $\pi_1(N)\cong\dot{\Gamma}$ is cocompactly special,
	namely, it admits a special cocompact action on a CAT(0) cube complex.
	By Theorem \ref{main-oddEulerCharacteristic}, there is a connected closed
	surface of odd Euler characteristic $\Sigma$ 
	which admits a $\pi_1$--injective quasi-Fuchsian immersion into $N$.
	Take $S$ to be an orientable connected closed surface 
	which also admits a $\pi_1$--injective quasi-Fuchsian immersion into $N$,
	for example, some finite cover of $\Sigma$.
	Form a $2$-complex $X$ by attaching an arc $\alpha$ between 
	$S$ and $\Sigma$, so
		$$\pi_1(X)\cong\pi_1(S)*\pi_1(\Sigma).$$
	By well known constructions, we can find a map
		$$f\colon X\longrightarrow N$$
	which embeds $\pi_1(X)$ into $\pi_1(N)$ as a quasi-convex subgroup.
	Moreover, we can require the restrictions
	to $S$ and $\Sigma$ to be the claimed quasi-Fuchsian immersions above.
	
	Since $\pi_1(N)$ is word hyperbolic and special, and $\pi_1(X)$ is embedded
	as a quasi-convex subgroup, it follows from \cite[Theorem 7.3]{Haglund-Wise}
	(see also \cite[Theorem 4.13]{Wise-book}) that 
	$\pi_1(X)$ is a virtual retract	of $\pi_1(N)$.
	In terms of maps, 
	there exists a finite cover 
		$$M\to N$$ into which $f$ lifts
	to be a map
	$$\iota_X\colon X\longrightarrow M,$$
	and there exists a map
	$$\rho_X\colon M\longrightarrow X,$$
	such that $\rho_X\circ\iota_X$ is homotopic to the identity.
	In fact, it is not hard to argue that $\iota_X$ can be homotoped to be an embedding.
	Fix a generic base point $*$ of $M$.
	Denote by $\iota_S$ and $\iota_\Sigma$ the induced embeddings of $S$ and $\Sigma$
	into $M$, and by $\rho$ the composition of retraction maps:
	$$\begin{CD} M @> \rho_X >> X @> \mathrm{retr.} >> S.\end{CD}$$

	Finally, we apply Proposition \ref{exponentialTorsionGrowth} to conclude that $(M,*)$ admits
	an exhausting nested tower of base-pointed finite covers 
	with exponential homological torsion growth. Since $\pi_1(M,*)$ 
	is isomorphic to a finite index subgroup of $\Gamma$, 
	the same conclusion holds for $\Gamma$ as well. 
	This completes the proof of Theorem \ref{exponentialTorsionGrowth}.

\bibliographystyle{amsalpha}


\end{document}